\documentclass[preprint,10.5pt]{elsarticle}
\usepackage{lipsum}

\usepackage[utf8]{inputenc}
\usepackage{graphicx}
\usepackage[usenames,dvipsnames]{color}
\usepackage[titletoc]{appendix}
\usepackage{natbib}
\usepackage[nottoc]{tocbibind}
\usepackage[protrusion=true,expansion=true]{microtype}
\usepackage{mathtools}
\usepackage{amsmath}

\usepackage{amsfonts}
\usepackage{amssymb}
\usepackage{graphicx}
\usepackage{mathrsfs}
\usepackage{xfrac}
\usepackage{dsfont}
\usepackage{epsfig}
\usepackage{amsthm}
\usepackage{url}
\usepackage{float}
\usepackage{color}
\usepackage{longtable}
\usepackage{parskip}
\usepackage{caption}
\usepackage{subfig}
\usepackage{sidecap}
\usepackage{comment}
\usepackage{setspace}
\usepackage[margin=0.85in]{geometry}
\usepackage[font=onehalfspacing]{caption}
\usepackage[usenames,dvipsnames]{xcolor}
\usepackage{hyperref}
\usepackage[labelfont=bf]{caption}
\usepackage{tikz,lipsum}
\usepackage{pgfplots}
\usepackage{pdfpages}
\usepackage{subcaption}
\usepackage{todonotes}
\usepackage{cleveref}
\usepackage{theoremref}
\usepackage{bbold}

\pgfplotsset{width=10cm,compat=1.9}

\hypersetup{colorlinks=true,citecolor=blue,linkcolor=blue,urlcolor=blue}

  \newtheoremstyle{problem}
  {\topsep}
  {\topsep}
  {\itshape}
  {}
  {\bfseries}
  {.}
  { }
  {Problem MFT}

  \newtheoremstyle{problem1}
  {\topsep}
  {\topsep}
  {\itshape}
  {}
  {\bfseries}
  {.}
  { }
  {Problem 1}

  \newtheoremstyle{problem2}
  {\topsep}
  {\topsep}
  {\itshape}
  {}
  {\bfseries}
  {.}
  { }
  {Problem 2}

  \newtheoremstyle{condition1}
  {\topsep}
  {\topsep}
  {\itshape}
  {}
  {\bfseries}
  {.}
  { }
  {(H1)}

  \newtheoremstyle{condition2}
  {\topsep}
  {\topsep}
  {\itshape}
  {}
  {\bfseries}
  {.}
  { }
  {(H2)}
  
\theoremstyle{definition}
\newtheorem{definition}{Definition}

  \newtheorem{theorem}{Theorem}
  \newtheorem{corollary}{Corollary}
  \newtheorem{proposition}{Proposition}
  \newtheorem{lemma}{Lemma}
  \newtheorem{corollary
}{Corollary}
  \newtheorem{assumption}{Assumption}
  \newtheorem{remark}{Remark}

\theoremstyle{problem}
  \newtheorem{probMFT}{Problem MFT}

\theoremstyle{problem1}
  \newtheorem{prob1}{Problem 1}

\theoremstyle{problem2}
  \newtheorem{prob2}{Problem 2}

\theoremstyle{condition1}
  \newtheorem{cond1}{(H1)}

\theoremstyle{condition2}
  \newtheorem{cond2}{(H2)}

\usepgfplotslibrary{groupplots}

  \newcommand{\E}{\mathbb{E}}

\begin{document}
\makeatletter
\def\ps@pprintTitle{
  \let\@oddhead\@empty
  \let\@evenhead\@empty
  \let\@oddfoot\@empty
  \let\@evenfoot\@oddfoot
}
\makeatother

\begin{frontmatter}

\title{A Decomposition Method for LQ Conditional McKean-Vlasov Control Problems with Random Coefficients\footnote{Funding: Dena Firoozi would like to acknowledge the support of the Natural Sciences and Engineering Research Council of Canada (NSERC), grants RGPIN-2022-05337 and DGECR-2022-00468. Shuang Gao would like to acknowledge the support of the NSERC grant RGPIN-2024-06612 and   the Fonds de recherche du Québec – Nature et technologies (FRQNT) grant 359699 (https://doi.org/10.69777/359699).  Onésime Hounkpe would like to acknowledge the support of Optimizing Power Skills in Interdisciplinary, Diverse \& Innovative Academic Networks (OPSIDIAN) from Polytechnique Montréal.}} 

\author[HEC]{Onésime Hounkpe}  
\author[UT]{Dena Firoozi\footnote{Corresponding Author.}}       
\author[Poly]{Shuang Gao} 

\address[HEC]{Department of Decision Sciences, HEC Montréal, Montreal, QC, Canada. \\Email: onesime.hounkpe@hec.ca}    

\address[UT]{Department of Statistical Sciences, University of Toronto, Toronto, ON, Canada. \\Email: dena.firoozi@utoronto.ca}

\address[Poly]{Department of Electrical Engineering, Polytechnique Montréal, QC, Canada. \\Email: shuang.gao@polymtl.ca} 

\date{}

\begin{abstract}
We propose a decomposition method for solving a general class of linear-quadratic (LQ) McKean-Vlasov control problems involving conditional expectations and random coefficients, where the system dynamics are driven by two independent Wiener processes. Unlike existing approaches in the literature for these problems, such as the extended stochastic maximum principle and the extended dynamic programming methods, which often involve additional technical complexities and sometimes impose restrictive conditions on control inputs, our approach decomposes the original McKean-Vlasov control problem into two decoupled stochastic optimal control problems, one of which has a constrained admissible control set. These auxiliary problems can be solved using classical methods. We establish an equivalence between the well-posedness and solvability of the auxiliary problems and those of the original problem, and show that the sum of the optimal controls of the auxiliary problems yields the optimal control of the original problem. Moreover, by applying a variational method, we characterize the optimal solution to the McKean-Vlasov control problem via two decoupled sets of (non-McKean-Vlasov) linear forward-backward stochastic differential equations (FBSDEs), each corresponding to one of the auxiliary problems. Finally, we show that standard dynamic programming can also be applied to solve the resulting auxiliary problems.
\end{abstract}

\begin{keyword}                           
McKean-Vlasov control, Decomposition method, Stochastic coefficients.               
\end{keyword} 

\end{frontmatter}

\section{Introduction}

McKean-Vlasov control problems, also known as mean-field type or mean-field control problems, are characterized by state dynamics and, in some cases, cost functionals that depend on the distributions of either the state, the control, or both \citep{Andersson2011,Buckdahn2011,Yong2013,Carmona2013,Lauriere2014,Pham:DetCoef2017,Pham:RandCoef2016,Acciaio2019,Bensoussan2025,Carmona2025,Pham2026}. A key motivation for studying these problems arises from the analysis of infinite-population cooperative mean-field games \citep{Carmona2013,Pham:RandCoef2016}. McKean-Vlasov control problems have various applications, including systemic risk modeling \citep{Lauriere2014,Pham:RandCoef2016},  exhaustible resources production \cite{Graber2016}, mean-variance portfolio selection \citep{Andersson2011,Pham:RandCoef2016}, and environmental risk management \citep{Bensoussan2025}.

While the classical stochastic maximum principle (see, e.g.,  \citep{Peng1990,XYZ2012}) and stochastic dynamic programming (see, e.g.,  \cite{XYZ2012,Phame2009}) 
do not apply directly to McKean-Vlasov control problems \citep{Lauriere2014,Carmona2015,Tang2019}, both frameworks have been extended to address such problems. Specifically, the extended stochastic maximum principle approach \citep{Andersson2011,Buckdahn2011,Carmona2015,Acciaio2019} gives rise to a Hamiltonian system involving McKean-Vlasov forward-backward stochastic differential equations (FBSDEs). Moreover, the extended dynamic programming approach \citep{Lauriere2014,Bensoussan2015,Pham:NoCN2018,Pham:DetCoef2017} reformulates McKean–Vlasov control problems into ones where the density or distribution of the original state is treated as the new controlled state. When the density is treated as the new  state \cite{Lauriere2014,Bensoussan2015}, the Fokker-Planck equation is used to characterize the density evolution. In the case where the marginal distribution is treated as the new state \cite{Pham:NoCN2018,Pham:DetCoef2017,Pham:RandCoef2016}, 
the value function is defined on a Wasserstein space of probability measures. However, this method imposes certain restrictions, for instance in models with dynamics driven by two Wiener processes, where the diffusion and drift coefficients are adapted to one of these noise processes, and the admissible control inputs are required to be adapted only to that noise \cite{Pham:DetCoef2017}. 

Linear-quadratic (LQ) McKean-Vlasov control problems, where linear dynamics and quadratic cost functionals depend on (conditional) expectations of the state, the control, or both, have also been extensively studied in the literature. In this framework, the extended stochastic maximum principle leads to a system of linear McKean-Vlasov FBSDEs that involve the (conditional) expectations of the adjoint process, as well as those of other processes appearing in the model \cite{Yong2013,Graber2016,Sun2017,Tang2019,Yang2022,Xiong2025}. Moreover, the extended dynamic programming approach imposes similar restrictions, as noted above, on the admissible control inputs in cases where the dynamics are driven by two Wiener processes \citep{Pham:RandCoef2016}. Additionally, the global well-posedness of infinite-dimensional Hamilton-Jacobi equations for a class of LQ McKean-Vlasov control problems with nonlinear, and possibly nonconvex, dependence on the conditional expectations of the state and the control given the common noise is investigated in \cite{Li2025}.

In this paper, we propose an alternative approach to address a general class of LQ McKean-Vlasov control problems where dynamics are driven by two independent Wiener processes and involve the conditional expectation of the state with respect to one of them, with the coefficients also adapted to this same noise. The key idea is to decompose the original problem into two auxiliary stochastic optimal control problems: one that is independent of the conditional expectations of the relevant processes, and the other that involves a constrained admissible control set requiring the conditional expected value of the control process to be almost surely zero. These two problems may be solved using standard methods developed for non-McKean-Vlasov problems. We establish the equivalence of well-posedness, solvability and solutions between the original Mckean-Vlasov problem and the two auxiliary stochastic control problems. Moreover, we employ a variational method to solve these auxiliary problems and, as a result, express the solution to the original Mckean-Vlasov control problem as the sum of these solutions. This solution is fully characterized by two decoupled sets of (non-Mckean-Vlasov) linear FBSDEs, each of which characterizes the solution to one of the auxiliary problems. Finally, we show that standard dynamic programming can also be applied to solve the resulting auxiliary problems. The proposed decomposition method is inspired by \citep{Mahajan2015}, which studies discrete-time $N$-player cooperative games where agents are coupled through their average state, and it naturally applies to simpler LQ Mckean-Vlasov models involving a single Wiener process or deterministic coefficients. 

Compared to the extended stochastic maximum principle \cite{Yong2013,Graber2016,Sun2017,Tang2019,Yang2022,Xiong2025}, our proposed method yields simpler FBSDEs that do not explicitly depend on the conditional expectations of the relevant processes. Moreover, relative to the extended dynamic programming approach \cite{Pham:RandCoef2016}, our method relaxes the restrictions on the admissible control set by allowing controls to be adapted to both Wiener processes driving the dynamics, while avoiding the need to treat infinite-dimensional Hamilton-Jacobi equations as in \cite{Li2025}. Finally, it addresses the open problem indicated in  \citep{Yong2013},  motivated by observations on the potential use of LQ problems with constrained control sets in studying LQ Mckean-Vlasov optimal control problems. It is worth noting that, unlike \cite{Mei2024, Mei2025, Yong2013}, where the McKean-Vlasov control problem is treated as a single problem and an orthogonal decomposition of the state is used to facilitate the derivation or representation of the optimal control, we use the decomposition to split, from the outset, the original McKean-Vlasov problem into two separate problems, and then solve them independently. To the best of our knowledge, the systematic establishment of equivalence in well-posedness, solvability, and solutions between the original McKean-Vlasov system and the two auxiliary stochastic control problems does not appear in the prior literature.

The remainder of the paper is organized as follows. \Cref{Sec2} introduces the necessary notation and preliminaries, and presents the formulation of the McKean-Vlasov control problems under consideration. \Cref{Sec3} details the proposed decomposition approach and applies a variational method to derive the solution to the McKean-Vlasov control problems under study, while the use of dynamic programming is deferred to \ref{AppendixB}. Specifically, the main result of the paper is stated in \Cref{KeyTheo}, which establishes the equivalence of optimal pairs between the considered class of LQ conditional McKean-Vlasov control problems and two independent classes of standard (non-McKean-Vlasov) LQ control problems. This theorem relies on \Cref{StateSplit}, \Cref{lemma:cost-term-decomp} and \Cref{CostSplit}. Subsequently, \Cref{UnifConvChara} establishes uniform convexity for all three problems as a sufficient condition for unique solvability. Finally, \Cref{OptimPrincMFT}, \Cref{optimal-control-MFT} and \Cref{thm:feedback-rep} together characterize the optimal control of the McKean-Vlasov control problem as the sum of the optimal controls of the two auxiliary problems. \Cref{sec:conclusion} concludes the paper. 

\section{Problem Formulation}\label{Sec2}
We begin by introducing the necessary notation. Two independent one-dimensional Wiener processes $W^{0}:=\left(W^{0}_{t}\right)_{t\in[0,T]}$ and $W:=\left(W_{t}\right)_{t\in[0,T]}$ are defined on the probability space $\left(\Omega,\mathfrak{F},\mathbb{P}\right)$. The filtration generated by $W^{0}$ is denoted by $\mathcal{F}^{0}:=\left(\mathcal{F}^{0}_{t}\right)_{t\in[0,T]}$, and the one generated by both $W^{0}$ and $W$ is denoted by $\mathcal{F}:=\left(\mathcal{F}_{t}\right)_{t\in[0,T]}$. These filtrations satisfy the usual conditions. For an arbitrary Banach space $\mathbb{X}$, endowed with the norm $\Vert \cdot\Vert_{\mathbb{X}}$ and an arbitrary filtration $\mathcal{G}$, the following spaces of random vectors or stochastic processes are defined:
\begin{align*}
        &L^{2}_{\mathcal{G}_{0}}\left(\Omega,\mathbb{X}\right):= \left\{Z:\Omega \to \mathbb{X}  \, | \, Z \mbox{ is } \,  \mathcal{G}_{0}-\mbox{measurable and}\, \E\Vert Z\Vert^{2}_{\mathbb{X}}<\infty\right\},\allowdisplaybreaks\\        &\mathcal{C}_{\mathcal{G}}\left([0,T],\mathbb{X}\right):=\left\{h:[0,T]\times\Omega \to \mathbb{X} \,  |\,  h(\cdot)\mbox{ is } \,  \mathcal{G}-\mbox{progessively measurable, continuous and }\Vert h \Vert_{\mathcal{C}_{\mathcal{G}}}<\infty \right\},\allowdisplaybreaks\\
        &L_{\mathcal{G}}^{2}\left( [0,T],\mathbb{X}\right):=\left\{h:[0,T]\times\Omega \to \mathbb{X} \,  |\,h(\cdot)\mbox{ is } \, \mathcal{G}-\mbox{progessively measurable and }\Vert h \Vert_{L_{\mathcal{G}}^{2}}<\infty \right\},\allowdisplaybreaks\\
        &L_{\mathcal{G}}^{\infty}\left( [0,T],\mathbb{X}\right):=\left\{h:[0,T]\times\Omega \to \mathbb{X} \,  |\,  h(\cdot) \mbox{ is }\, \mathcal{G}-\mbox{progessively measurable and }\Vert h \Vert_{L_{\mathcal{G}}^{\infty}}<\infty \right\},\allowdisplaybreaks\\
        &L_{\mathcal{G}_{\tau}}^{\infty}\left(\mathbb{X}\right):= \left\{Z:\Omega \to \mathbb{X} \,  |\,  Z\mbox{ is }\,  \mathcal{G}_{\tau}-\mbox{measurable and }\Vert Z \Vert_{L_{\mathcal{G}_{\tau}}^{\infty}}<\infty \right\},\,\,\mbox{for } \tau\geq 0,\,
\end{align*}
where $\Vert h \Vert_{\mathcal{C}_{\mathcal{G}}}:= \left(\E\sup_{t\in[0,T]}\Vert h_{t}\Vert_{\mathbb{X}}^{2}\right)^{\frac{1}{2}}$, $\Vert h \Vert_{L_{\mathcal{G}}^{2}}:= \left(\E\int_{0}^{T}\Vert h_{t}\Vert_{\mathbb{X}}^{2}dt\right)^{\frac{1}{2}}$, $\Vert Z \Vert_{L_{\mathcal{G}_{\tau}}^{\infty}}:=\inf\left\{\alpha\geq0\, |\,  \Vert Z(\omega)\Vert_{\mathbb{X}}\leq \alpha, d\mathbb{P}- a.s.\right\}$ and $\Vert h \Vert_{L_{\mathcal{G}}^{\infty}}:=\inf\left\{\alpha\geq0\, |\,  \Vert h(t,\omega)\Vert_{\mathbb{X}}\leq \alpha,\,  dt\otimes d\mathbb{P}-a.s.\right\}$. Moreover, the set of all real-valued symmetric matrices of dimension $n\times n$ are denoted by $\mathbb{S}^{n}$.

We study a class of conditional McKean-Vlasov control problems with stochastic coefficients, defined as follows. 
\begin{probMFT}\thlabel{MTProb}
Define an admissible control set as $\mathcal{U}:  = L_{\mathcal{F}}^{2}\left( [0,T],\mathbb{R}^{d}\right)$. Find the optimal control process $u^{*} \in \mathcal{U}$ that minimizes the cost functional $J\left(u\right)$ given by      
            \begin{align}
                &J\left(u\right) = \frac{1}{2}\E\bigg\{\int_{0}^{T}\Big[\left(x_{t}-H\E[x_{t}|\mathcal{F}_{t}^0]\right)^{\intercal}Q_{t}\left(x_{t}-H\E[x_{t}|\mathcal{F}_{t}^0]\right) + 2\left(x_{t}-H\E[x_{t}|\mathcal{F}_{t}^0]\right)^{\intercal}S_{t}u_{t}+ u^{\intercal}_{t}R_{t}u_{t}  \notag\allowdisplaybreaks\\ 
                    &\qquad \qquad \qquad + 2\zeta^{\intercal}_{t}\left(x_{t}-H\E[x_{t}|\mathcal{F}_{t}^0]\right) + 2\varpi_{t}^{\intercal}u_{t} \Big]dt + \left(x_{T}-H\E[x_{T}|\mathcal{F}_{T}^0]\right)^{\intercal}Q_{T}\left(x_{T}-H\E[x_{T}|\mathcal{F}_{T}^0]\right)\bigg\},\label{MTCost} 
            \end{align} 
where the state process $x\in \mathcal{C}_{\mathcal{F}}\left([0,T],\mathbb{R}^{n}\right)$ satisfies 
\begin{equation}
    dx_{t} = [A_{t}x_{t}+B_{t}u_{t}+ F_{t}\E[x_{t}|\mathcal{F}_{t}^0] + b_{t}]dt+D_{t} dW_{t} + D_{t}^{0} dW_{t}^{0} ,  \qquad x_0 = \xi \in  L^{2}_{\mathcal{F}_0}\left(\Omega,\mathbb{R}^{n}\right). \label{MTIstate}
\end{equation}
All coefficients in the above equations are stochastic and defined as follows: $A, F\in L_{\mathcal{F}^{0}}^{\infty}\left( [0,T],\mathbb{R}^{n\times n}\right)$, $B,S\in L_{\mathcal{F}^{0}}^{\infty}\left( [0,T],\mathbb{R}^{n\times d}\right)$, $b,D,D^{0}\in L_{\mathcal{F}^{0}}^{2}\left([0,T],\mathbb{R}^{n}\right)$, $H\in L_{\mathcal{F}^{0}_{0}}^{\infty}\left(\mathbb{R}^{n\times n}\right)$, $Q\in L_{\mathcal{F}^{0}}^{\infty}\left( [0,T],\mathbb{S}^{n}\right)$, $\zeta\in L_{\mathcal{F}^{0}}^{2}\left([0,T],\mathbb{R}^{n}\right)$, $\varpi\in L_{\mathcal{F}^{0}}^{2}\left([0,T],\mathbb{R}^{d}\right)$ and $R\in L_{\mathcal{F}^{0}}^{\infty}\left( [0,T],\mathbb{S}^{d}\right)$. 
\end{probMFT}      

\begin{definition}[Admissible Pair] For an admissible control process \( u \in \mathcal{U} \), the pair \( (x, u) \), where \( x \) satisfies \eqref{MTIstate} with \( u \) as the control, is called an \textit{admissible pair} for the \hyperref[MTProb]{\color{black}\thnameref{MTProb}}.   
\end{definition}
\begin{definition}[Solvability and Optimal Pair]
     \hyperref[MTProb]{\color{black}\thnameref{MTProb}} is said to be (uniquely) solvable if there exists a (unique) minimizer of \( J(\cdot) \); that is, a (unique) admissible control \( u^{*} \in \mathcal{U} \) such that \( J(u^{*}) \leq J(u) \) for all \( u \in \mathcal{U} \). The corresponding state \( x^{*} \), which satisfies \eqref{MTIstate} under the control \( u^{*} \), is called the \emph{optimal trajectory}, and the pair \( (x^{*}, u^{*}) \) is referred to as the \emph{optimal pair} for the \hyperref[MTProb]{\color{black}\thnameref{MTProb}}.
\end{definition}
 The dependence of the model in \hyperref[MTProb]{\color{black}\thnameref{MTProb}} on the conditional expectation of the state, i.e., \( \E[x_t | \mathcal{F}_t^0] \), classifies it as a conditional Mckean-Vlasov (or equivalently, mean-field type or mean-field) optimal control problem. For instance, this model is motivated by the asymptotic formulation of systemic risk in the interbank framework of \cite[pp.~22--24]{Pham:RandCoef2016}, where the dynamics of banks' log-monetary reserves are driven by both idiosyncratic and common noise processes. In this setting, each bank regulates its borrowing and lending rates with a central bank, and a cooperative equilibrium is established from the perspective of a social planner who optimizes all banks' strategies to minimize total systemic cost. In particular, our model generalizes these control strategies by allowing the admissible borrowing and lending rates to be adapted to both noise processes instead of to only common noise.
 
 Similar to other optimal control problems, this problem can be addressed through three main steps: (i) establishing the well-posedness of the state SDE \eqref{MTIstate} and the cost functional \eqref{MTCost}, (ii) proving the solvability of \hyperref[MTProb]{\color{black}\thnameref{MTProb}}, and (iii) characterizing the corresponding optimal control.  

Mckean-Vlasov optimal control problems have been extensively studied in the literature. For any \( \xi \in L_{\mathcal{F}_0^0}^2(\Omega, \mathbb{R}^n) \) and \( u \in \mathcal{U} \), the SDE \eqref{MTIstate} admits a unique solution \( x \in \mathcal{C}_{\mathcal{F}}([0,T], \mathbb{R}^n) \), and the cost functional \( J(u) \) defined by \eqref{MTCost} is well-posed \citep{Pham:RandCoef2016}. Moreover, the solvability of such problems and the characterization of the optimal control have been established using both the stochastic maximum principle and dynamic programming approaches.  

However, these methods often require certain extensions that depend on additional, and sometimes restrictive, assumptions about the model. In particular, the extended maximum principle leads to a Hamiltonian system of Mckean-Vlasov type involving the conditional expectation of the adjoint process with respect to the filtration \( \mathcal{F}^0 \), which may introduce additional technical challenges \citep{Yang2022}. The extended dynamic programming principle, on the other hand, requires admissible control processes to be adapted only to \( \mathcal{F}^0 \), and not to \( \mathcal{F} \) \citep{Pham:RandCoef2016}.

In the following section, we propose a decomposition method that may allow the use of standard techniques without imposing additional conditions on the measurability of  admissible control inputs or requiring a Mckean-Vlasov Hamiltonian system.

\section{Decomposition Approach}\label{Sec3}
We decompose \hyperref[MTProb]{\color{black}\thnameref{MTProb}} into two auxiliary problems: (1) a standard stochastic optimal control problem, and (2) a standard stochastic optimal control problem with a constrained admissible control set. These auxiliary problems may be solved using classical methods. As a result, classical methods can  be also used to address  \hyperref[MTProb]{\color{black}\thnameref{MTProb}}, which highlights the main advantage of the proposed decomposition method.

 We begin the analysis of \hyperref[MTProb]{\color{black}\thnameref{MTProb}} using the proposed decomposition method by defining the two auxiliary stochastic optimal control problems, followed by an examination of their well-posedness, solvability, and optimal control characterization, as well as their relation to those of \hyperref[MTProb]{\color{black}\thnameref{MTProb}} in the subsequent subsections.
        \begin{prob1}[First Auxiliary Problem]\thlabel{DecouplProb1}
            
            Define an admissible control set as $\bar{\mathcal{U}}: =  L_{\mathcal{F}^{0}}^{2}\left( [0,T],\mathbb{R}^{d}\right)$. Find the optimal control process $v^{*}\in\bar{\mathcal{U}}$ that minimizes the cost functional $\bar{J}\left(v\right)$ given by 
                \begin{equation}\label{BarCost}
                    \begin{split}
                        \bar{J}\left(v\right) &:= \frac{1}{2}\E\left\{\int_{0}^{T}\left[y^{\intercal}_{t}\bar{Q}_{t}y_{t} + 2y^{\intercal}_{t}\bar{S}_{t}v_{t} + 2\bar{\zeta}^{\intercal}y_{t} + 2\varpi_{t}^{\intercal}v_{t} + v_{t}^{\intercal}R_{t}v_{t}\right]dt + y^{\intercal}_{T}\bar{Q}_{T}y_{T}\right\},
                    \end{split}
                \end{equation} where $\bar{S}_{t}:=\left(I_{n} - H\right)^{\intercal}S_{t},\, \bar{\zeta}_t:= \left(I_{n} - H\right)^{\intercal}\zeta_t\mbox{ and } \bar{Q}_{t}:=\left(I_{n} - H\right)^{\intercal}Q_{t}\left(I_{n} - H\right)$, and the state process $y \in \mathcal{C}_{\mathcal{F}^{0}}\left([0,T],\mathbb{R}^{n}\right)$ satisfies
            \begin{equation}\label{BarState}
                    dy_{t} = [(A_{t} + F_{t})y_{t} + B_{t}v_{t} + b_{t}]dt+ D_{t}^{0} dW_{t}^0, \quad 
                    y_0 =\bar{\xi}\in \mathbb{R}^{n}.
            \end{equation} 
            All other coefficients in the above equations are defined as in \hyperref[MTProb]{\color{black}\thnameref{MTProb}}.
        \end{prob1}

        \begin{prob2}[Second Auxiliary Problem]\thlabel{DecouplProb2}
             Define an admissible control set as $\breve{\mathcal{U}}: = \Big\{ \alpha \in L_{\mathcal{F}}^{2}\left( [0,T],\mathbb{R}^{d}\right):\allowdisplaybreaks \E\left[\alpha_{t} \big| \mathcal{F}^{0}_{t}\right]=0,\,dt\otimes d\mathbb{P}-a.s. \Big\}$. Find the optimal control process $\alpha^\ast \in \breve{\mathcal{U}}$ that minimizes the cost functional $\breve{J}\left(\alpha\right)$ given by 
            \begin{equation}\label{BrevCost}
                \breve{J}\left(\alpha\right):= \frac{1}{2}\E\left\{\int_{0}^{T}\left[{z_{t}}^{\intercal}Q_{t}z_{t}+2{z_{t}}^{\intercal}S_{t}\alpha_{t} + {\alpha_{t}}^{\intercal}R_{t}\alpha_{t}\right]dt + {z_{T}}^{\intercal}Q_{T}z_{T}\right\},
            \end{equation} 
            where the state process $z \in \mathcal{C}_{\mathcal{F}}\left([0,T],\mathbb{R}^{n}\right)$ satisfies
            \begin{equation}\label{BrevState}
                    dz_{t} = \left[A_{t}z_{t} + B_{t}\alpha_{t}\right]dt + D_{t} dW_{t}, \quad
                    z_0 = \breve{\xi}\in \left\{\breve{\xi} \in L^{2}_{\mathcal{F}_0}\left(\Omega,\mathbb{R}^{n}\right): \E\big[\breve{\xi}\big]=0\right\}.
               \end{equation}
          All the coefficients in the above equations are defined as in \hyperref[MTProb]{\color{black}\thnameref{MTProb}}.    
        \end{prob2}
    \begin{remark}
    From \eqref{BrevCost}-\eqref{BrevState}, \hyperref[DecouplProb2]{\color{black}\thnameref{DecouplProb2}} is a stochastic optimal control problem involving dynamics driven by the Wiener process \(W_t\), 
with coefficients adapted to the filtration generated by an exogenous Wiener process \(W^0_t\). 
Although distinct from the setting of this paper, similar dynamics arise in limiting LQ mean field games with a major agent (see, e.g., \cite{huang2010large, firoozi2020convex, carmona2016probabilistic,nourian2013epsilon,firoozi2022LQG,huang2020linear,carmona2017alternative}), 
where the major agent’s state and the mean field, both adapted to the filtration generated by \(W^0_t\), 
serve as exogenous stochastic coefficients in the dynamics of the representative minor agent driven by \(W_t\) 
(see, e.g., \cite[Section 3]{carmona2017alternative} and \cite[Section 5]{firoozi2020convex}).
    \end{remark}
    \begin{remark}\label{Remark1} For any
       $v \in \bar{\mathcal{U}}$  and any $\alpha \in \breve{\mathcal{U}}$, we have $v,\alpha\in\mathcal{U}$ and
       \[
            \langle v, \alpha \rangle_{\mathcal{U}} = \mathbb{E} \int_0^T v_t^\intercal \alpha_t dt = \mathbb{E} \int_0^T \E\left[v_t^\intercal \alpha_t| \mathcal{F}^{0}_{t}\right] dt = \mathbb{E} \int_0^T v_t^\intercal\E\left[ \alpha_t| \mathcal{F}^{0}_{t}\right] dt = 0,
       \] which indicates the orthogonality of the control sets $\bar{\mathcal{U}}$ and $\breve{\mathcal{U}}$ within $\mathcal{U}$.
    \end{remark}
    \subsection{Well-posedness}
        Based on existing results related to stochastic optimal control problems with random coefficients \citep{Bismut1976,Tang2003,Sun2021},
        for any $\bar{\xi} \in \mathbb{R}^{n}$ and $v\in \bar{\mathcal{U}}$ (respectively, $\breve{\xi} \in \big\{\breve{\xi} \in L^{2}_{\mathcal{F}_0}\left(\Omega,\mathbb{R}^{n}\right): \E\big[\breve{\xi}\big]=0\big\} $ and $\alpha\in \breve{\mathcal{U}}$), the SDE \eqref{BarState} \big(respectively, \eqref{BrevState}\big) admits a unique solution 
        $y\in \mathcal{C}_{\mathcal{F}^{0}}\left([0,T],\mathbb{R}^{n}\right)$ (respectively, $z\in\{z\in\mathcal{C}_{\mathcal{F}}\left([0,T],\mathbb{R}^{n}\right)$: $\E\left[z_{t} | \mathcal{F}^{0}_{t}\right]=0\,dt\otimes d\mathbb{P}-a.s.\}$). 
        
    In the following, we establish the relationship between the admissible pairs of \hyperref[MTProb]{\color{black}\thnameref{MTProb}} and those of \hyperref[DecouplProb1]{\color{black}\thnameref{DecouplProb1}} and \hyperref[DecouplProb2]{\color{black}\thnameref{DecouplProb2}}. For convenience, for any admissible pair $\left(x,u\right)$  of \hyperref[MTProb]{\color{black}\thnameref{MTProb}}, we denote
    \begin{align}\label{Decomp-state-control}
    \begin{array}{cc}
       \;\;\bar{x}_{t}:= \E[x_{t}|\mathcal{F}_{t}^0],  &\;\;\bar{u}_{t} := \E[u_{t}|\mathcal{F}_{t}^0], \\ 
        \breve{x}_{t}:= x_{t} - \bar{x}_{t},  &\breve{u}_{t}:= u_{t} - \bar{u}_{t}.
    \end{array}
    \end{align}
     \begin{assumption}(Initial Conditions)\label{ass:init-cond}
     For any given  initial condition $\xi \in L^{2}_{\mathcal{F}_0^0}\left(\Omega,\mathbb{R}^{n}\right)$ for \eqref{MTIstate}, the initial conditions of \eqref{BarState} and \eqref{BrevState} are, respectively, set to
    $
    \bar{\xi} = \E[\xi]$ and $ \breve{\xi} = \xi - \E[\xi]$.    
  \end{assumption}
   
   \begin{proposition}(Admissibility)\label{StateSplit}
        Suppose that \Cref{ass:init-cond} holds. Then, the following statements hold:
        \begin{itemize}
            \item[(a)]  If  $\left(y,v\right)\in \mathcal{C}_{\mathcal{F}^{0}}\left([0,T],\mathbb{R}^{n}\right)\times\bar{\mathcal{U}}$ is  an admissible pair for \hyperref[DecouplProb1]{\color{black}\thnameref{DecouplProb1}} and $\left(z,\alpha\right)\in \mathcal{C}_{\mathcal{F}}\left([0,T],\mathbb{R}^{n}\right)\times\breve{\mathcal{U}}$ an admissible pair  for \hyperref[DecouplProb2]{\color{black}\thnameref{DecouplProb2}},  then the pair $\left(y +z ,v + \alpha\right)\in \mathcal{C}_{\mathcal{F}}\left([0,T],\mathbb{R}^{n}\right)\times\mathcal{U}$ is admissible for \hyperref[MTProb]{\color{black}\thnameref{MTProb}}.
            \item[(b)] If  $\left(x,u\right)\in \mathcal{C}_{\mathcal{F}}\left([0,T],\mathbb{R}^{n}\right)\times\mathcal{U}$ is  an  admissible pair for \hyperref[MTProb]{\color{black}\thnameref{MTProb}}, then $\left(\bar{x},\bar{u}\right)$ is admissible for \hyperref[DecouplProb1]{\color{black}\thnameref{DecouplProb1}} and $\left(\breve{x},\breve{u}\right)$  admissible for \hyperref[DecouplProb2]{\color{black}\thnameref{DecouplProb2}}.
        \end{itemize}
       
    \end{proposition}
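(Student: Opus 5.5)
Part (a) is a direct verification and part (b) is a conditional-expectation computation. For (a), I would set $x:=y+z$ and $u:=v+\alpha$. Clearly $u\in\mathcal{U}$, since $\bar{\mathcal{U}},\breve{\mathcal{U}}\subset\mathcal{U}$ (\Cref{Remark1}), and $x\in\mathcal{C}_{\mathcal{F}}([0,T],\mathbb{R}^n)$. The key identity is $\E[x_t|\mathcal{F}^0_t]=y_t$, i.e. $\E[z_t|\mathcal{F}^0_t]=0$. This property of $z$ was recorded in the well-posedness discussion; I would nevertheless justify it, since it is the only nontrivial point (see below). Granting it, adding \eqref{BarState} and \eqref{BrevState} gives
\[
dx_t=[A_tx_t+B_tu_t+F_ty_t+b_t]dt+D_tdW_t+D^0_tdW^0_t=[A_tx_t+B_tu_t+F_t\E[x_t|\mathcal{F}^0_t]+b_t]dt+D_tdW_t+D^0_tdW^0_t .
\]
By \Cref{ass:init-cond}, $x_0=\E[\xi]+\xi-\E[\xi]=\xi$. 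Uniqueness of solutions of \eqref{MTIstate} then shows that $x$ is the state associated with $u$, so $(x,u)$ is admissible.

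For (b), I would take conditional expectations given $\mathcal{F}^0_t$ in the integral form of \eqref{MTIstate}. Three facts are needed:
\begin{itemize}
\item[(i)] $\E\big[\int_0^t\phi_s ds\,\big|\,\mathcal{F}^0_t\big]=\int_0^t\E[\phi_s|\mathcal{F}^0_s]ds$ for $\phi\in L^2_{\mathcal{F}}$. This holds because $W$ is independent of $\mathcal{F}^0$ and the increments of $W^0$ on $(s,t]$ are independent of $\mathcal{F}_s$, so $\E[\phi_s|\mathcal{F}^0_t]=\E[\phi_s|\mathcal{F}^0_s]$.
\item[(ii)] $\E\big[\int_0^tD_sdW_s\,\big|\,\mathcal{F}^0_t\big]=0$, since $D$ is $\mathcal{F}^0$-adapted and $W$ is independent of $\mathcal{F}^0_T$.
\item[(iii)] $\int_0^tD^0_sdW^0_s$ is already $\mathcal{F}^0_t$-measurable.
\end{itemize}
Pulling the $\mathcal{F}^0$-adapted coefficients $A,B,F,b$ out of the conditional expectations, and noting that $\E[\xi|\mathcal{F}^0_t]=\E[\xi]$ because $\mathcal{F}_0$ is independent of $W^0$, we obtain
\[
\bar{x}_t=\E[\xi]+\int_0^t[(A_s+F_s)\bar{x}_s+B_s\bar{u}_s+b_s]ds+\int_0^tD^0_sdW^0_s .
\]
Hence $\bar x$ has a continuous $\mathcal{F}^0$-adapted version solving \eqref{BarState} with $\bar\xi=\E[\xi]$. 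Moreover $\bar u\in L^2_{\mathcal{F}^0}$ by Jensen's inequality, taking a progressively measurable version of the optional projection. Subtracting this equation from \eqref{MTIstate} gives $d\breve x_t=[A_t\breve x_t+B_t\breve u_t]dt+D_tdW_t$ with $\breve x_0=\xi-\E[\xi]$, which has zero mean. Finally, $\E[\breve u_t|\mathcal{F}^0_t]=0$, so $\breve u\in\breve{\mathcal{U}}$ and $(\breve x,\breve u)$ is admissible for \hyperref[DecouplProb2]{\color{black}\thnameref{DecouplProb2}}.

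The main obstacle is fact (i): the interchange of conditional expectation with the time integral when the conditioning $\sigma$-field $\mathcal{F}^0_t$ moves with $t$, together with the fact that conditioning on $\mathcal{F}^0_t$ rather than $\mathcal{F}^0_s$ changes nothing. I would prove it through the identity $\E[\phi_s|\mathcal{F}^0_t]=\E[\phi_s|\mathcal{F}^0_s]$, using the independence of $\sigma(W^0_r-W^0_s:r\in(s,t])$ from $\mathcal{F}_s\vee\mathcal{F}^0_s$, and then apply Fubini for conditional expectations. The same identity also gives $\E[z_t|\mathcal{F}^0_t]=0$ in (a). Applying (i) and (ii) to the linear equation \eqref{BrevState} shows that $m_t:=\E[z_t|\mathcal{F}^0_t]$ solves $dm_t=(A_tm_t+B_t\E[\alpha_t|\mathcal{F}^0_t])dt=A_tm_t\,dt$ with $m_0=\E[\breve\xi]=0$. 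Therefore $m\equiv0$ by Gronwall's inequality, since $A$ is bounded.
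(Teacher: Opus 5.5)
Your proof is correct and follows the same route as the paper. In (a) you add the two state equations and use $\E[y_t+z_t\mid\mathcal{F}^0_t]=y_t$; in (b) you condition \eqref{MTIstate} on $\mathcal{F}^0_t$ and subtract. Where the paper simply asserts $\E[z_t\mid\mathcal{F}^0_t]=0$ (from its well-posedness paragraph) and the conditional dynamics of $\bar{x}$, you justify both using the identity $\E[\phi_s\mid\mathcal{F}^0_t]=\E[\phi_s\mid\mathcal{F}^0_s]$, the vanishing conditional mean of $\int D\,dW$, and Gronwall, which are exactly the details the paper leaves implicit.
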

    \begin{proof}
         We begin with statement (a). If $\left(y,v\right)\in \mathcal{C}_{\mathcal{F}^{0}}\left([0,T],\mathbb{R}^{n}\right)\times\bar{\mathcal{U}}$ and $\left(z,\alpha\right)\in \mathcal{C}_{\mathcal{F}}\left([0,T],\mathbb{R}^{n}\right)\times\breve{\mathcal{U}}$ are admissible for  \hyperref[DecouplProb1]{\color{black}\thnameref{DecouplProb1}} and \hyperref[DecouplProb2]{\color{black}\thnameref{DecouplProb2}}, respectively, then it is clear that $y +z\in \mathcal{C}_{\mathcal{F}}\left([0,T],\mathbb{R}^{n}\right)$ and $v + \alpha\in\mathcal{U}$.
         
         Furthermore, from \eqref{BarState} and \eqref{BrevState}, $y + z$ satisfies
        \begin{equation}\label{dynamics-y+z}
                d(y_{t} + z_{t}) = [A_{t}(y_{t} + z_{t}) + B_{t}(v_{t} + \alpha_{t}) + F_{t}y_{t} + b_{t}]dt+ D_{t} dW_{t} + D^{0}_{t} dW_{t}^{0}, \quad
                y_{0} + z_{0} = \xi.
        \end{equation} 
        Since $\E[y_{t} + z_{t} | \mathcal{F}_{t}^0] = y_{t}$ for every $t\in[0,T]$, it follows that \eqref{dynamics-y+z} coincides with \eqref{MTIstate}, implying that $y + z$ satisfies \eqref{MTIstate}. Hence, $\left(y +z ,v + \alpha\right)$ is an admissible pair for \hyperref[MTProb]{\color{black}\thnameref{MTProb}}.

        Now, we verify the statement (b). Let $(x,u)\in \mathcal{C}_{\mathcal{F}}\left([0,T],\mathbb{R}^{n}\right)\times\mathcal{U}$ be admissible for \hyperref[MTProb]{\color{black}\thnameref{MTProb}}. Note that from \eqref{Decomp-state-control} $\bar{u}\in \bar{\mathcal{U}}$ and $\breve{u}\in \breve{\mathcal{U}}$, and from \eqref{MTIstate} we obtain 
            \begin{equation}
                \begin{array}{cc}
                    d\bar{x}_{t} = \left[(A_{t} + F_{t})\bar{x}_{t} + B_{t}\bar{u}_{t} + b_{t}\right]dt+ D_{t}^{0} dW_{t}^0,  & \bar{x}_{0} =\E[\xi], \\
                    d\breve{x}_{t} = \left[A_{t}\breve{x}_{t} + B_{t}\breve{u}_{t}\right]dt + D_{t} dW_{t},\qquad \qquad \,\quad &
                    \qquad\breve{x}_{0} = \xi - \E[\xi].
                \end{array}
            \end{equation} 
            That is, for $\bar{\xi} = \E[\xi]$, $\breve{\xi} = \xi - \E[\xi]$, $v=\bar{u}$ and $\alpha=\breve{u}$, the processes $\bar{x}$ and $\breve{x}$ satisfy \eqref{BarState} and \eqref{BrevState}, respectively. Therefore, for any admissible pair $(x,v)$ for \hyperref[MTProb]{\color{black}\thnameref{MTProb}}, the pairs $(\bar{x},\bar{u})$ and $(\breve{x},\breve{u})$ are admissible for \hyperref[DecouplProb1]{\color{black}\thnameref{DecouplProb1}} and \hyperref[DecouplProb2]{\color{black}\thnameref{DecouplProb2}}, respectively.
    \end{proof}
    
\subsection{Solvability}
In this section, we demonstrate that the solvability of \hyperref[MTProb]{\color{black}\thnameref{MTProb}} is equivalent to that of \hyperref[DecouplProb1]{\color{black}\thnameref{DecouplProb1}} and \hyperref[DecouplProb2]{\color{black}\thnameref{DecouplProb2}}, which constitutes a key result of our approach. We begin with the following lemma.

\begin{lemma}[Relevant Decompositions]\label{lemma:cost-term-decomp}
    Suppose that \Cref{ass:init-cond} holds. Then, for any admissible  pair $\left(x,u\right)$ for \hyperref[MTProb]{\color{black}\thnameref{MTProb}} and any $t\in [0,T]$, the following properties hold: 
        
            \begin{enumerate}[(i)]
                \item $\E\left[\zeta_{t}^{\intercal}\left(x_{t} - H\bar{x}_{t}\right)\right] = \E\big[\bar{\zeta_{t}}^{\intercal}\bar{x}_{t}\big]$,

                \item $\E[\varpi_{t}^{\intercal}u_{t}] = \E[\varpi_{t}^{\intercal}\bar{u}_{t}]$,

                \item $\E[u_{t}^{\intercal}R_{t}u_{t}] = \E[\breve{u}_{t}^{\intercal}R_{t}\breve{u}_{t}] + \E[\bar{u}_{t}^{\intercal}R_{t}\bar{u}_{t}]$,

                \item $\E[\left(x_{t} - H\bar{x}_{t}\right)^{\intercal}S_{t}u_{t}] = \E[\breve{x}_{t}^{\intercal}S_{t}\breve{u}_{t}] + \E[\bar{x}^{\intercal}_{t}\bar{S}_{t}\bar{u}_{t}]$,

                \item $\E[\left(x_{t} - H\bar{x}_{t}\right)^{\intercal}Q_{t}\left(x_{t} - H\bar{x}_{t}\right)] = \E[\breve{x}^{\intercal}_{t}Q_{t}\breve{x}_{t}] + \E\left[\bar{x}_{t}^{\intercal}\bar{Q}_{t}\bar{x}_{t} \right]$,
            \end{enumerate}
        where $(\bar{x},\bar{u})$ and $(\breve{x},\breve{u})$ are as given in \eqref{Decomp-state-control}. Moreover, $\bar{S}_{t}:=\left(I_{n} - H\right)^{\intercal}S_{t},\, \bar{\zeta}_t = \left(I_{n} - H\right)^{\intercal}\zeta_t\mbox{ and } \bar{Q}_{t}:=\left(I_{n} - H\right)^{\intercal}Q_{t}\left(I_{n} - H\right)$, as given in the description of \hyperref[DecouplProb1]{\color{black}\thnameref{DecouplProb1}}. 
        \end{lemma}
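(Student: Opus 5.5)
The plan is to use a single identity throughout: every coefficient ($\zeta_t,\varpi_t,R_t,S_t,Q_t$ and $H$) is $\mathcal{F}^0_t$-measurable, since $H$ is $\mathcal{F}^0_0$-measurable. Hence for any $\mathcal{F}^0_t$-measurable (matrix-valued, essentially bounded or square-integrable) $M$ and square-integrable $\mathcal{F}_t$-measurable $X$, we have $\E[M X\,|\,\mathcal{F}^0_t]=M\,\E[X|\mathcal{F}^0_t]$. In particular $\E[\breve{x}_t|\mathcal{F}^0_t]=0$ and $\E[\breve{u}_t|\mathcal{F}^0_t]=0$. The consequence I will use repeatedly is that any cross term of the form $\E[\bar{X}^\intercal M \breve{Y}]$, with $\bar X$ being $\mathcal{F}^0_t$-measurable and $\breve Y\in\{\breve x_t,\breve u_t\}$, vanishes by the tower property:
\[
\E[\bar{X}^\intercal M\breve{Y}]=\E\big[\bar{X}^\intercal M\,\E[\breve{Y}|\mathcal{F}^0_t]\big]=0 .
\]
This is the same computation as in Remark~\ref{Remark1}. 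Integrability is not an issue: $x\in\mathcal{C}_{\mathcal{F}}$ and $u\in\mathcal{U}$, so for a.e.\ $t$ all products are integrable, since the coefficients are either in $L^\infty$ or are $L^2$ paired with $L^2$.

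The key algebraic step is to rewrite $x_t-H\bar{x}_t=\breve{x}_t+(I_n-H)\bar{x}_t$ and $u_t=\breve{u}_t+\bar{u}_t$. Item (ii) then follows immediately, because $\E[\varpi_t^\intercal\breve u_t]=0$. For (i), $\E[\zeta_t^\intercal\breve x_t]=0$, so what remains is $\E[\zeta_t^\intercal(I_n-H)\bar x_t]=\E[\bar\zeta_t^\intercal\bar x_t]$. For (iii), expanding $(\breve u_t+\bar u_t)^\intercal R_t(\breve u_t+\bar u_t)$ gives cross terms $2\E[\bar u_t^\intercal R_t\breve u_t]$ (using the symmetry of $R_t$), and these vanish. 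For (iv), expanding $(\breve x_t+(I_n-H)\bar x_t)^\intercal S_t(\breve u_t+\bar u_t)$ yields four terms. The terms $\E[\breve x_t^\intercal S_t\bar u_t]$ and $\E[\bar x_t^\intercal (I_n-H)^\intercal S_t\breve u_t]$ vanish by the identity above (for the first, transpose to put $\breve x_t$ last). The remaining terms are $\E[\breve x_t^\intercal S_t\breve u_t]+\E[\bar x_t^\intercal\bar S_t\bar u_t]$. Item (v) is handled the same way with the symmetric $Q_t$: the cross term is $2\E[\bar x_t^\intercal(I_n-H)^\intercal Q_t\breve x_t]=0$, and the diagonal terms give $\E[\breve x_t^\intercal Q_t\breve x_t]+\E[\bar x_t^\intercal\bar Q_t\bar x_t]$. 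The case $t=T$ with $Q_T$ is included.

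There is no real obstacle here. The only point needing care is checking that $H$ and $(I_n-H)$ may be pulled out of the conditional expectation, which holds because $H$ is $\mathcal{F}^0_0\subset\mathcal{F}^0_t$-measurable and bounded, and that the conditional-expectation manipulations are justified integrability-wise. Beyond that, \Cref{ass:init-cond} and \Cref{StateSplit}(b) are used only to identify $\bar x,\breve x$ as the processes of \eqref{Decomp-state-control}. The identities themselves hold pointwise in $t$ for any square-integrable pair.
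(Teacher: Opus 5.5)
Your proof is correct and follows essentially the paper's approach. Both arguments rest on the $\mathcal{F}^0_t$-measurability of all coefficients (including $H$) together with the tower property, which kills every cross term between an $\mathcal{F}^0_t$-measurable factor and $\breve{x}_t$ or $\breve{u}_t$. The only difference is bookkeeping: the paper starts from $\E[\breve{x}_t^\intercal S_t\breve{u}_t]$ and splits $\breve{x}_t=(x_t-H\bar{x}_t)+(H-I_n)\bar{x}_t$, whereas you expand $x_t-H\bar{x}_t=\breve{x}_t+(I_n-H)\bar{x}_t$ directly, which is slightly cleaner.
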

        \begin{proof}
        Using the smoothing property of conditional expectations, i.e. $\E\big[\E[.|\mathcal{F}^0_t]\big]=\E[.]$ for every $t\in [0,T]$ in our context, (i), (ii) and (iii) can be established. The detailed proofs of (iv) and (v) are deferred to \ref{DecomProof}.
      \end{proof}      
    
    \begin{corollary}[Cost Functional Decomposition]\label{CostSplit}
        Suppose that \Cref{ass:init-cond} holds. Then, for any admissible pair $(x,u)\in\mathcal{C}_{\mathcal{F}}\left([0,T],\mathbb{R}^{n}\right)\times\mathcal{U}$, the cost functional $J(u)$, given by \eqref{MTCost}, can be expressed as 
        \begin{equation}
            J\left(u\right) = \bar{J}\left(\bar{u}\right) + \breve{J}\left(\breve{u}\right),
        \end{equation}
        where $\bar{J}(.)$, $\breve{J}(.)$, and $(\bar{x},\bar{u})$ and $(\breve{x},\breve{u})$ are, respectively, given by \eqref{BarCost}, \eqref{BrevCost}, and \eqref{Decomp-state-control}. 
    \end{corollary}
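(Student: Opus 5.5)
The plan is to expand $J(u)$ in \eqref{MTCost} term by term. I will apply \Cref{lemma:cost-term-decomp} pointwise in $t$ and then integrate. The terminal term is handled the same way as the running quadratic term. First, by Fubini's theorem (all integrands are integrable because the coefficients are bounded or square integrable and $x\in\mathcal{C}_{\mathcal{F}}$, $u\in\mathcal{U}$), I will write
\[
J(u)=\frac12\int_0^T \E\Big[(x_t-H\bar x_t)^\intercal Q_t(x_t-H\bar x_t)+2(x_t-H\bar x_t)^\intercal S_tu_t+u_t^\intercal R_tu_t+2\zeta_t^\intercal(x_t-H\bar x_t)+2\varpi_t^\intercal u_t\Big]dt+\frac12\E\big[(x_T-H\bar x_T)^\intercal Q_T(x_T-H\bar x_T)\big].
\]
This lets each expectation be handled separately for fixed $t$.

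Next, I will substitute items (i)–(v) of \Cref{lemma:cost-term-decomp} into the integrand. Item (v) splits the quadratic state term into $\E[\breve x_t^\intercal Q_t\breve x_t]+\E[\bar x_t^\intercal\bar Q_t\bar x_t]$. Item (iv) splits the cross term into $\E[\breve x_t^\intercal S_t\breve u_t]+\E[\bar x_t^\intercal \bar S_t\bar u_t]$, and item (iii) splits the control term. By items (i) and (ii), the linear terms contribute only $\E[\bar\zeta_t^\intercal\bar x_t]$ and $\E[\varpi_t^\intercal\bar u_t]$, with no breve contribution. For the terminal term, item (v) is applied at $t=T$. The lemma is stated for every $t\in[0,T]$ with $Q_T$ playing the role of the weight, and $Q_T$ is $\mathcal{F}^0_T$-measurable, so the same conditioning argument applies. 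This yields $\E[\breve x_T^\intercal Q_T\breve x_T]+\E[\bar x_T^\intercal\bar Q_T\bar x_T]$.

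Then I will regroup the bar terms and the breve terms. By \Cref{StateSplit}(b), under \Cref{ass:init-cond}, $(\bar x,\bar u)$ is an admissible pair for \hyperref[DecouplProb1]{\color{black}\thnameref{DecouplProb1}}, so $\bar x$ is the state $y$ of \eqref{BarState} driven by $v=\bar u$. Likewise, $(\breve x,\breve u)$ is admissible for \hyperref[DecouplProb2]{\color{black}\thnameref{DecouplProb2}}, so $\breve x=z$ for $\alpha=\breve u$. By uniqueness of solutions of \eqref{BarState} and \eqref{BrevState}, the regrouped bar terms are exactly $\bar J(\bar u)$ in \eqref{BarCost} and the breve terms are exactly $\breve J(\breve u)$ in \eqref{BrevCost}. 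The proof then concludes by applying Fubini in reverse.

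The only delicate point is the bookkeeping: checking that no cross terms between bar and breve quantities survive, and that the terminal term fits the lemma. Both are already encoded in \Cref{lemma:cost-term-decomp}, so I do not expect a real obstacle. The main thing to verify is that identifying $\bar x$ and $\breve x$ with the auxiliary states relies on \Cref{StateSplit}(b), rather than just on the definitions in \eqref{Decomp-state-control}.
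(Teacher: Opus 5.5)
Your proposal is correct and follows essentially the same route as the paper: expand $J(u)$, substitute items (i)--(v) of \Cref{lemma:cost-term-decomp}, and use \Cref{StateSplit}(b) to identify $(\bar{x},\bar{u})$ and $(\breve{x},\breve{u})$ with the state--control pairs of the two auxiliary problems. Your Fubini justification and your explicit application of (v) at $t=T$ to the terminal term (using that $Q_T$ is $\mathcal{F}^0_T$-measurable) only make explicit steps the paper leaves implicit.
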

    \begin{proof} 
        
        From \Cref{StateSplit} and the properties (i)-(v), for any admissible pair $(x,u)$ for \hyperref[MTProb]{\color{black}\thnameref{MTProb}}, we have
              \begin{align*}
                    J\left(u\right) &= \frac{1}{2}\E\bigg\{\int_{0}^{T}\left[(x_{t}-H\bar{x}_{t})^{\intercal}Q_{t}(x_{t}-H\bar{x}_{t})+2(x_{t}-H\bar{x}_{t})^{\intercal}S_{t}u_{t} + 2\zeta^{\intercal}_{t}\left(x_{t} - H\bar{x}_{t}\right) + 2\varpi_{t}^{\intercal}u_{t} + u^{\intercal}_{t}R_{t}u_{t}\right]dt\nonumber\allowdisplaybreaks\\
                    &\qquad \quad + (x_{T}-H\bar{x}_{T})^{\intercal}Q_{T}(x_{T}-H\bar{x}_{T})\bigg\}\nonumber\allowdisplaybreaks\\
                    & = \frac{1}{2}\E\bigg\{\int_{0}^{T}\left[\bar{x}^{\intercal}_{t}\bar{Q}_{t}\bar{x}_{t} + 2\bar{x}^{\intercal}_{t}\bar{S}_{t}\bar{u}_{t} + 2\bar{\zeta}^{\intercal}\bar{x}_{t} + 2\varpi_{t}^{\intercal}\bar{u}_{t} + \bar{u}_{t}^{\intercal}R_{t}\bar{u}_{t}\right]dt + \bar{x}^{\intercal}_{T}\bar{Q}_{T}\bar{x}_{T}\bigg\}\nonumber\allowdisplaybreaks\\
                    &\quad + \frac{1}{2}\E\bigg\{\int_{0}^{T}\left[{\breve{x}_{t}}^{\intercal}Q_{t}\breve{x}_{t}+2{\breve{x}_{t}}^{\intercal}S_{t}\breve{u}_{t} + {\breve{u}_{t}}^{\intercal}R_{t}\breve{u}_{t}\right]dt + {\breve{x}_{T}}^{\intercal}Q_{T}\breve{x}_{T}\bigg\}\nonumber\allowdisplaybreaks\\
                    &= \bar{J}\left(\bar{u}\right) + \breve{J}\left(\breve{u}\right).
                \end{align*} 
    \end{proof}  
    The following theorem contains the main result of the paper and relies directly on \Cref{StateSplit}, \Cref{lemma:cost-term-decomp} and \Cref{CostSplit}.
    \begin{theorem}(Solvability)\label{KeyTheo} 
        Suppose that \Cref{ass:init-cond} holds. Then, \hyperref[MTProb]{\color{black}\thnameref{MTProb}} is solvable if and only if both \hyperref[DecouplProb1]{\color{black}\thnameref{DecouplProb1}} and \hyperref[DecouplProb2]{\color{black}\thnameref{DecouplProb2}} are solvable. Furthermore, 
    \begin{itemize}
        \item[(a)] If the pair $(y^{*},v^{*})$ is optimal for \hyperref[DecouplProb1]{\color{black}\thnameref{DecouplProb1}} and the pair $(z^{*},\alpha^{*})$ is optimal for \hyperref[DecouplProb2]{\color{black}\thnameref{DecouplProb2}}, then the pair $(x,u):= (y^{*} + z^{*},v^{*} + \alpha^{*})$ is optimal for \hyperref[MTProb]{\color{black}\thnameref{MTProb}};

        \item[(b)] If  the pair $(x^{*},u^{*})$ is optimal for \hyperref[MTProb]{\color{black}\thnameref{MTProb}}, then the pair $(y_{t},v_{t}):=\left(\E[x^{*}_{t}|\mathcal{F}_{t}^0],\E[u^{*}_{t}|\mathcal{F}_{t}^0]\right)$ is optimal for \hyperref[DecouplProb1]{\color{black}\thnameref{DecouplProb1}}, and the pair $(z_{t},\alpha_{t}):=\left(x^{*}_{t} - \E[x^{*}_{t}|\mathcal{F}_{t}^0],u^{*}_{t} - \E[u^{*}_{t}|\mathcal{F}_{t}^0]\right)$ is optimal for \hyperref[DecouplProb2]{\color{black}\thnameref{DecouplProb2}}.
    \end{itemize}
    \end{theorem}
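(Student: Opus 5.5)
Both directions rest on combining \Cref{StateSplit} with \Cref{CostSplit}, together with one extra fact: the decomposition \eqref{Decomp-state-control} inverts the summation map of \Cref{StateSplit}(a). Concretely, take $(y,v)$ admissible for \hyperref[DecouplProb1]{\color{black}\thnameref{DecouplProb1}} and $(z,\alpha)$ admissible for \hyperref[DecouplProb2]{\color{black}\thnameref{DecouplProb2}}, and set $x=y+z$, $u=v+\alpha$. Since $y,v$ are $\mathcal{F}^0$-adapted and $\E[\alpha_t|\mathcal{F}^0_t]=0$, we get $\bar u=v$ and $\breve u=\alpha$. For the state, the well-posedness discussion gives $\E[z_t|\mathcal{F}^0_t]=0$, so $\bar x=y$ and $\breve x=z$. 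This identity can also be seen directly: $\breve x$ and $z$ solve the same linear SDE \eqref{BrevState} with the same initial datum and control, so they coincide by uniqueness. \Cref{CostSplit} then gives
\begin{equation*}
J(v+\alpha)=\bar J(v)+\breve J(\alpha)
\end{equation*}
for every $v\in\bar{\mathcal U}$ and every $\alpha\in\breve{\mathcal U}$. Moreover, every $u\in\mathcal U$ can be written as $u=\bar u+\breve u$ with $\bar u\in\bar{\mathcal U}$ and $\breve u\in\breve{\mathcal U}$, by \Cref{StateSplit}(b). So the map $(v,\alpha)\mapsto v+\alpha$ is a bijection $\bar{\mathcal U}\times\breve{\mathcal U}\to\mathcal U$, and it carries $\bar J+\breve J$ to $J$.

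\emph{Part (a) and the ``if'' direction.} Let $v^*$ minimize $\bar J$ and $\alpha^*$ minimize $\breve J$. For any $u\in\mathcal U$,
\begin{equation*}
J(u)=\bar J(\bar u)+\breve J(\breve u)\ge \bar J(v^*)+\breve J(\alpha^*)=J(v^*+\alpha^*).
\end{equation*}
Hence $v^*+\alpha^*$ is optimal. By \Cref{StateSplit}(a) the associated state is $y^*+z^*$, so $(y^*+z^*,v^*+\alpha^*)$ is an optimal pair.

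\emph{Part (b) and the ``only if'' direction.} Let $u^*$ be optimal for \hyperref[MTProb]{\color{black}\thnameref{MTProb}}. Suppose $\bar u^*$ were not optimal for \hyperref[DecouplProb1]{\color{black}\thnameref{DecouplProb1}}, i.e. some $v\in\bar{\mathcal U}$ had $\bar J(v)<\bar J(\bar u^*)$. Then $v+\breve u^*\in\mathcal U$ and
\begin{equation*}
J(v+\breve u^*)=\bar J(v)+\breve J(\breve u^*)<J(u^*),
\end{equation*}
which contradicts optimality of $u^*$. The symmetric argument, perturbing only the $\breve{\mathcal U}$ component, shows $\breve u^*$ is optimal for \hyperref[DecouplProb2]{\color{black}\thnameref{DecouplProb2}}. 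By \Cref{StateSplit}(b) the corresponding states are $\bar x^*$ and $\breve x^*$, which gives the optimal pairs stated in (b).

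The only delicate point is \Cref{ass:init-cond}. It fixes $\bar\xi=\E[\xi]$ and $\breve\xi=\xi-\E[\xi]$, and this is what makes the auxiliary initial data consistent with any admissible pair of the original problem. For $\bar\xi=\E[\xi]$ to equal $\E[\xi|\mathcal F^0_0]$, the initial $\sigma$-field $\mathcal F^0_0$ must be trivial; this holds when the filtration is generated by $W^0$ (up to null sets). Once that is checked, the rest is bookkeeping, and the main obstacle is the identification $\E[z_t|\mathcal F^0_t]=0$, which I would settle via the uniqueness argument above.
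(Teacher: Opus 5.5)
Your proof is correct and follows essentially the same route as the paper: it combines the admissibility correspondence of \Cref{StateSplit} with the cost splitting of \Cref{CostSplit}, using $\E[y_t+z_t|\mathcal{F}^0_t]=y_t$ and $\E[v_t+\alpha_t|\mathcal{F}^0_t]=v_t$ to get $J(v+\alpha)=\bar J(v)+\breve J(\alpha)$, and then compares costs in each direction (your contradiction argument in (b) is just the paper's direct inequality rephrased). Your remark that \Cref{ass:init-cond} tacitly relies on $\E[\xi|\mathcal{F}^0_0]=\E[\xi]$, i.e.\ on triviality of $\mathcal{F}^0_0$, is a fair point that the paper leaves implicit.
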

    \begin{proof} 
           First we show that if \hyperref[DecouplProb1]{\color{black}\thnameref{DecouplProb1}} and \hyperref[DecouplProb2]{\color{black}\thnameref{DecouplProb2}} are solvable, then \hyperref[MTProb]{\color{black}\thnameref{MTProb}} is also solvable and statement (a) holds. Suppose that both \hyperref[DecouplProb1]{\color{black}\thnameref{DecouplProb1}} and \hyperref[DecouplProb2]{\color{black}\thnameref{DecouplProb2}} are solvable; that is, there exist optimal pairs $(y^{*},v^{*})$ and $(z^{*},\alpha^{*})$ such that 
                \begin{equation}
                    \begin{split} \label{FirstSplitIneq}
                        \bar{J}\left(v^{*}\right)\leq \bar{J}\left(v\right),\quad\forall v \in \bar{\mathcal{U}} \quad \text{and} \quad                        \breve{J}\left(\alpha^{*}\right) \leq \breve{J}\left(\alpha\right),\quad \forall \alpha\in \breve{\mathcal{U}}.
                    \end{split}
                \end{equation} 
        According to \Cref{StateSplit}, for any admissible pair $\left(x,u\right)$ of \hyperref[MTProb]{\color{black}\thnameref{MTProb}} the corresponding pairs $\left(\bar{x},\bar{u}\right)$ and $\left(\breve{x},\breve{u}\right)$, as defined in \eqref{Decomp-state-control}, are admissible for \hyperref[DecouplProb1]{\color{black}\thnameref{DecouplProb1}} and \hyperref[DecouplProb2]{\color{black}\thnameref{DecouplProb2}}, respectively. Hence, from \eqref{FirstSplitIneq}, we have
                \begin{equation}
                    \bar{J}\left(v^{*}\right)\leq \bar{J}\left(\bar{u}\right) \quad \mbox{ and } \quad \breve{J}\left(\alpha^{*}\right) \leq \breve{J}\left(\breve{u}\right).\label{mid-step}
                \end{equation}
        From \Cref{CostSplit} and \eqref{mid-step}, for any admissible pair $\left(x,u\right)$ of \hyperref[MTProb]{\color{black}\thnameref{MTProb}}, we have  
            \begin{equation}
                J\left(u\right) = \bar{J}\left(\bar{u}\right) + \breve{J}\left(\breve{u}\right) \geq \bar{J}\left(v^{*}\right) + \breve{J}\left(\alpha^{*}\right) = J\left(v^{*} + \alpha^{*}\right),\quad \forall u\in\mathcal{U},
            \end{equation}
            where the last equality holds because $\E[v^*_t+\alpha^*_t|\mathcal{F}^0_t]=v^*_t$ and $\E[y^*_t+z^*_t|\mathcal{F}^0_t]=y^*_t$ for every $t\in[0,T]$ under \Cref{ass:init-cond}.
            Hence, $(y^{*} + z^{*},v^{*} + \alpha^{*})$ is an optimal pair for $J(\cdot)$, and therefore, \hyperref[MTProb]{\color{black}\thnameref{MTProb}} is solvable.

            We now show that if \hyperref[MTProb]{\color{black}\thnameref{MTProb}} is solvable, then both \hyperref[DecouplProb1]{\color{black}\thnameref{DecouplProb1}} and \hyperref[DecouplProb2]{\color{black}\thnameref{DecouplProb2}} are solvable and statement (b) holds. Suppose that \hyperref[MTProb]{\color{black}\thnameref{MTProb}} is solvable; that is, there exists an optimal pair $(x^{*},u^{*})$ such that
               \begin{equation}
                    J\left(u^{*}\right)\leq J\left(u\right),\quad \forall u \in \mathcal{U}.
                \end{equation}
            According to \Cref{StateSplit}, for any admissible pair $(y,v)$ for \hyperref[DecouplProb1]{\color{black}\thnameref{DecouplProb1}}, $(y+\breve{x}^{*},v+\breve{u}^{*})$ is an admissible pair for \hyperref[MTProb]{\color{black}\thnameref{MTProb}}. Hence, from \Cref{CostSplit}, we have  
            \begin{equation}
                 J\left(v + \breve{u}^{*}\right) = \bar{J}\left(v\right) + \breve{J}\left(\breve{u}^{*}\right),
            \end{equation} since $\E[v_t+\breve{u}^*_t|\mathcal{F}^0_t]=v_t$ and $\E[y_t+\breve{x}^*_t|\mathcal{F}^0_t]=y_t$ for every $t\in[0,T]$ under \Cref{ass:init-cond}.
            Moreover, since $u^{*} = \bar{u}^{*} + \breve{u}^{*}$, we have
            \begin{equation}
                \begin{split}
                    \bar{J}\left(\bar{u}^{*}\right) = J\left(\bar{u}^{*} + \breve{u}^{*}\right) - \breve{J}\left(\breve{u}^{*}\right)\leq J\left(v + \breve{u}^{*}\right) - \breve{J}\left(\breve{u}^{*}\right) = \bar{J}\left(v\right),\quad\forall v\in\bar{\mathcal{U}}.
                \end{split}
            \end{equation} Consequently, the pair $(\bar{x}^{*},\bar{u}^{*})$ is a optimal pair for \hyperref[DecouplProb1]{\color{black}\thnameref{DecouplProb1}} and making it solvable. Similarly, it can be shown that the pair $(\breve{x}^{*},\breve{u}^{*})$ is optimal for \hyperref[DecouplProb2]{\color{black}\thnameref{DecouplProb2}} and making it solvable.
        \end{proof}
     We now discuss uniform convexity, which provides a sufficient condition for the unique solvability of the problems under consideration.
    \begin{assumption}\label{A2}
        $R_{t}\geq \delta I_{d}$, with $\delta > 0$ and $I_{d}$ being the identity matrix on $\mathbb{R}^{d\times d}$, $  Q_{t} - S_{t}R^{-1}_{t}S^{\intercal}_{t}\geq 0, \,dt\otimes d\mathbb{P}-a.s$ and $Q_{T}\geq 0 \,d\mathbb{P}-a.s$.
    \end{assumption}

\begin{proposition}(Uniform Convexity)\label{UnifConvChara}
    Under \Cref{A2}, the cost functionals $J(u)$, $\bar{J}(v)$ and $\breve{J}(\alpha)$, respectively associated with \hyperref[MTProb]{\color{black}\thnameref{MTProb}}, \hyperref[DecouplProb1]{\color{black}\thnameref{DecouplProb1}} and \hyperref[DecouplProb2]{\color{black}\thnameref{DecouplProb2}}, are uniformly convex.
\end{proposition}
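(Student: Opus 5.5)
Since all three cost functionals are quadratic in the control with affine state maps, I will reduce uniform convexity to a coercivity estimate for the purely quadratic part. For $\breve{J}$, the state map $\alpha\mapsto z^{\alpha}$ is affine, $z^{\alpha}=z^{0}+\mathcal{L}\alpha$, where $\mathcal{L}\alpha$ solves $d\hat z_t=[A_t\hat z_t+B_t\alpha_t]dt$ with $\hat z_0=0$ (no noise, no initial condition). Expanding, for $\alpha_1,\alpha_2\in\breve{\mathcal{U}}$ and $\lambda\in[0,1]$,
\begin{equation*}
\lambda\breve{J}(\alpha_1)+(1-\lambda)\breve{J}(\alpha_2)-\breve{J}(\lambda\alpha_1+(1-\lambda)\alpha_2)=\lambda(1-\lambda)\,\breve{J}^{0}(\alpha_1-\alpha_2),
\end{equation*}
where $\breve{J}^{0}(\beta):=\frac12\E\{\int_0^T[\hat z_t^\intercal Q_t\hat z_t+2\hat z_t^\intercal S_t\beta_t+\beta_t^\intercal R_t\beta_t]dt+\hat z_T^\intercal Q_T\hat z_T\}$ is the homogeneous cost with $\hat z=\mathcal{L}\beta$. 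Uniform convexity then amounts to showing $\breve{J}^{0}(\beta)\ge \frac{c}{2}\E\int_0^T|\beta_t|^2dt$ for some $c>0$. The same identity holds for $J$ and $\bar J$ with their respective homogeneous systems.

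To obtain the bound, I will complete the square pointwise:
\begin{equation*}
\hat z^\intercal Q\hat z+2\hat z^\intercal S\beta+\beta^\intercal R\beta=\hat z^\intercal(Q-SR^{-1}S^\intercal)\hat z+(\beta+R^{-1}S^\intercal\hat z)^\intercal R(\beta+R^{-1}S^\intercal\hat z).
\end{equation*}
By Assumption \ref{A2}, the first term and $\hat z_T^\intercal Q_T\hat z_T$ are nonnegative. This leaves $\breve J^{0}(\beta)\ge\frac{\delta}{2}\E\int_0^T|\beta_t+R_t^{-1}S_t^\intercal\hat z_t|^2dt$. This is the main obstacle: the quantity $\beta+R^{-1}S^\intercal\hat z$ is not $\beta$ itself, so a lower bound of the form $c\|\beta\|^2$ is needed. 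I will handle it by an invertibility argument. Set $\gamma:=\beta+R^{-1}S^\intercal\hat z$. Then $\hat z$ solves $d\hat z=[(A-BR^{-1}S^\intercal)\hat z+B\gamma]dt$ with $\hat z_0=0$, a linear ODE with bounded $\mathcal{F}^0$-progressive coefficients. Gronwall gives $\E\int|\hat z|^2\le K\E\int|\gamma|^2$ with $K$ depending only on $T$ and the $L^\infty$ bounds. Consequently $\|\beta\|\le\|\gamma\|+C\|\hat z\|\le(1+C\sqrt K)\|\gamma\|$, and this yields $\breve J^{0}(\beta)\ge c\|\beta\|^2$ with $c=\frac{\delta}{2(1+C\sqrt K)^2}>0$. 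Boundedness of $R^{-1}$ follows from $R\ge\delta I$. The constraint $\E[\beta|\mathcal{F}^0]=0$ plays no role, since the estimate holds on all of $L^2_{\mathcal{F}}$.

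For $\bar J$ the argument is identical, with $A+F$ replacing $A$ and $(\bar Q,\bar S,R)$ replacing $(Q,S,R)$. I need $\bar Q-\bar S R^{-1}\bar S^\intercal=(I-H)^\intercal(Q-SR^{-1}S^\intercal)(I-H)\ge0$ and $\bar Q_T=(I-H)^\intercal Q_T(I-H)\ge0$, both immediate from Assumption \ref{A2}. For $J$, I will avoid working with the McKean–Vlasov state directly. Corollary \ref{CostSplit}, applied to the homogeneous system, gives $J^{0}(u)=\bar J^{0}(\bar u)+\breve J^{0}(\breve u)$, and Remark \ref{Remark1} (orthogonality) gives $\|u\|^2=\|\bar u\|^2+\|\breve u\|^2$. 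The coercivity of $J^0$ then follows from that of $\bar J^0$ and $\breve J^0$ with constant $\min(\bar c,\breve c)$. This uses that the decomposition lemmas carry over to zero data ($b=\zeta=\varpi=0$, $D=D^0=0$, $\xi=0$).
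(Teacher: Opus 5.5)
Your proposal is correct and follows essentially the same route as the paper: complete the square using \Cref{A2}, bound $\Vert\beta\Vert$ by $\Vert\beta+R^{-1}S^\intercal\hat z\Vert$ by inverting this map through the associated closed-loop ODE (this is the paper's bounded operator $\mathcal{A}^{-1}$, whose bound you make explicit via Gronwall), and then obtain the estimate for $J$ from \Cref{CostSplit} with zero data together with the orthogonality in \Cref{Remark1}, with constant $\min\{\bar c,\breve c\}$. You also fill in steps the paper leaves tacit: the explicit reduction of uniform convexity to coercivity of the homogeneous quadratic part, and the checks $\bar Q-\bar SR^{-1}\bar S^\intercal=(I-H)^\intercal(Q-SR^{-1}S^\intercal)(I-H)\ge 0$ and $\bar Q_T\ge 0$.
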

\begin{proof}
    From \cite[Theorem $4.2$(iii)]{XYZ2012} and \cite[Corollary $3.5$(ii)]{Sun2021}, we have 
     \begin{enumerate}
        \item The cost functional $\bar{J}(v)$ of \hyperref[DecouplProb1]{\color{black}\thnameref{DecouplProb1}} is uniformly convex, if there exists $\delta_{1}>0$ such that
            \begin{equation}\label{UnifCovexCondBar}
                \E\left\{\int_{0}^{T}\left[{y^{0}_{t}}^{\intercal}\bar{Q}_{t}y^{0}_{t} + 2{y^{0}_{t}}^{\intercal}\bar{S}_{t}v_{t} + {v_{t}}^{\intercal}R_{t}v_{t}\right]dt + {y^{0}_{T}}^{\intercal}\bar{Q}_{T}y^{0}_{T}\right\} \geq \delta_{1}\E\int_{0}^{T}v^{\intercal}_{t} v_{t} dt,\quad\forall v\in\bar{\mathcal{U}},
            \end{equation} where $y^{0}_t$ satisfies 
        \begin{equation}\label{ZeroBarFBSDE}
                    dy^{0}_{t} = [(A_{t} + F_{t})y^{0}_{t} + B_{t}v_{t}]dt, \quad y^{0}_{0} = 0.
        \end{equation} 

        \item The cost functional $\breve{J}(\alpha)$ of \hyperref[DecouplProb2]{\color{black}\thnameref{DecouplProb2}} is uniformly convex, if  there exists $\delta_{2}>0$ such that
            \begin{equation}\label{UnifCovexCondBrev}
                \E\left\{\int_{0}^{T}\left[{z^{0}_{t}}^{\intercal}Q_{t}z^{0}_{t}+2{z^{0}_{t}}^{\intercal}S_{t}\alpha_{t} + {\alpha_{t}}^{\intercal}R_{t}\alpha_{t}\right]dt + {z^{0}_{T}}^{\intercal}Q_{T}z^{0}_{T}\right\} \geq \delta_{2}\E\int_{0}^{T}\alpha^{\intercal}_{t} \alpha_{t} dt,\quad\forall \alpha\in\breve{\mathcal{U}},
            \end{equation} where $z^{0}_t$ satisfies 
        \begin{equation}\label{ZeroBrevFBSDE}
                dz^{0}_{t} = [A_{t}z^{0}_{t} + B_{t}\alpha_{t}]dt, \quad z^{0}_{0} = 0.
        \end{equation}
    \end{enumerate}
Moreover, following the definition in \cite[pp.~$1106$]{Sun2017}, the cost functional $J(u)$ of \hyperref[MTProb]{\color{black}\thnameref{MTProb}} is uniformly convex, if there exists $\delta_{MFT}>0$ such that
       \begin{equation}\label{UnifCovexCond}
                \begin{split}
                    &\E\bigg\{\int_{0}^{T}\left[{\left(x^{0}_{t} - H\E[x^{0}_{t}\big|\mathcal{F}^{0}_{t}]\right)}^{\intercal}Q_{t}\left(x^{0}_{t} - H\E[x^{0}_{t}\big|\mathcal{F}^{0}_{t}]\right) + 2{\left(x^{0}_{t} - H\E[x^{0}_{t}\big|\mathcal{F}^{0}_{t}]\right)}^{\intercal}S_{t}u_{t} + {u_{t}}^{\intercal}R_{t}u_{t}\right]dt\\ 
                    &\quad +  {\left(x^{0}_{T} - H\E[x^{0}_{T}\big|\mathcal{F}^{0}_{T}]\right)}^{\intercal}Q_{T}\left(x^{0}_{T} - H\E[x^{0}_{T}\big|\mathcal{F}^{0}_{T}]\right)\bigg\}\geq \delta_{MFT}\E\int_{0}^{T}u^{\intercal}_{t} u_{t} dt,\quad \forall u\in\mathcal{U},
                \end{split}  
            \end{equation} 
            where $x^{0}_t$ satisfies
        \begin{equation}\label{ZeroFBSDE}
                    dx^{0}_{t} = \left[A_{t}x^{0}_{t}+B_{t}u_{t}+ F_{t}\E[x^{0}_{t}|\mathcal{F}_{t}^0]\right]dt, \quad
                    x^{0}_0 = 0.
        \end{equation}
     Using \Cref{Remark1} and the cost functional decomposition given in \Cref{CostSplit} for the case where $b=0$ and $D^{0}=D=0$, one can readily verify that \eqref{UnifCovexCondBar} and \eqref{UnifCovexCondBrev} together imply \eqref{UnifCovexCond}, with $\delta_{MFT}=\min\{\delta_{1},\delta_{2}\}$. Therefore, the uniform convexity of $\bar{J}(v)$ and $\breve{J}(\alpha)$ leads to the uniform convexity of $J(u)$. In the following, we establish the uniform convexity of $\bar{J}(v)$ and omit the proof for $\breve{J}(\alpha)$, as it follows by similar arguments.  
    
    Under \Cref{A2}, for any process $v\in\bar{\mathcal{U}}$ and its corresponding state process $y^{0}$ satisfying \eqref{ZeroBarFBSDE}, we have
    \begin{align}
        \label{UConv1}
            \E\Big\{{y^{0}_{T}}^{\intercal}\bar{Q}_{T}y^{0}_{T}\Big\} &+ \E\bigg\{\int_{0}^{T}\left[{y^{0}_{t}}^{\intercal}\bar{Q}_{t}y^{0}_{t} + 2{y^{0}_{t}}^{\intercal}\bar{S}_{t}v_{t} + {v_{t}}^{\intercal}R_{t}v_{t}\right]dt\bigg\}\nonumber\allowdisplaybreaks\\
            &\geq \E\int_{0}^{T}\left[{y^{0}_{t}}^{\intercal}\bar{Q}_{t}y^{0}_{t} + 2{y^{0}_{t}}^{\intercal}\bar{S}_{t}v_{t} + {v_{t}}^{\intercal}R_{t}v_{t}\right]dt\nonumber\allowdisplaybreaks\\
            &= \E\int_{0}^{T}\left[{y^{0}_{t}}^{\intercal}(\bar{Q}_{t} - \bar{S}_{t}R^{-1}_{t}\bar{S}^{\intercal}_{t})y^{0}_{t} + {y^{0}_{t}}^{\intercal} \bar{S}_{t}R^{-1}_{t}\bar{S}^{\intercal}_{t}y^{0}_{t} + 2{y^{0}_{t}}^{\intercal}\bar{S}_{t}v_{t} + {v_{t}}^{\intercal}R_{t}v_{t}\right]dt\nonumber\allowdisplaybreaks\\
            &= \E\int_{0}^{T}\left[{y^{0}_{t}}^{\intercal}(\bar{Q}_{t} - \bar{S}_{t}R^{-1}_{t}\bar{S}^{\intercal}_{t})y^{0}_{t} + \left(\left(R^{-1}_{t}\right)^{\frac{1}{2}}\bar{S}^{\intercal}_{t}y^{0}_{t} + R^{\frac{1}{2}}_{t}v_{t}\right)^{\intercal}\left(\left(R^{-1}_{t}\right)^{\frac{1}{2}}\bar{S}^{\intercal}_{t}y^{0}_{t} + R^{\frac{1}{2}}_{t}v_{t}\right)\right]dt\nonumber\allowdisplaybreaks\\
            &\geq \E\int_{0}^{T}\left(R^{-1}_{t}\bar{S}^{\intercal}_{t}y^{0}_{t} + v_{t}\right)^{\intercal}R_{t}\left(R^{-1}_{t}\bar{S}^{\intercal}_{t}y^{0}_{t} + v_{t}\right)dt\nonumber\allowdisplaybreaks\\
            &\geq \delta\E\int_{0}^{T}\left(R^{-1}_{t}\bar{S}^{\intercal}_{t}y^{0}_{t} + v_{t}\right)^{\intercal}\left(R^{-1}_{t}\bar{S}^{\intercal}_{t}y^{0}_{t} + v_{t}\right)dt.
        \end{align}
    Now define the bounded linear operators $\mathcal{A}, \mathcal{A}^{-1}: \bar{\mathcal{U}} \to \bar{\mathcal{U}}$, respectively, given by 
    \begin{equation}\label{operator-def}
        \begin{split}
            (\mathcal{A}v)_{t} = R^{-1}_{t}\bar{S}^{\intercal}_{t}y^{0}_{t} + v_{t} \quad \mbox{and}\quad
            (\mathcal{A}^{-1}v)_{t} = -R^{-1}_{t}\bar{S}^{\intercal}_{t}\tilde{y}^{0}_{t} + v_{t},
        \end{split}
    \end{equation}
    where $y^{0}$ satisfies \eqref{ZeroBarFBSDE} and $\tilde{y}^{0}$ satisfies 
    \begin{equation}
            d\tilde{y}^{0}_{t} = \left[\left(A_{t} + F_{t} - B_{t}R^{-1}_{t}\bar{S}^{\intercal}_{t}\right)\tilde{y}^{0}_{t} + B_{t}v_{t}\right]dt,\quad \tilde{y}^{0}_{0} = 0.
    \end{equation} 
    We can readily verify that, for any $v\in\bar{\mathcal{U}}$,
    \begin{equation}
        \mathcal{A}^{-1}\mathcal{A}v = v,
    \end{equation}
    by proceeding as in 
    \begin{equation}
        \begin{split}
            (\mathcal{A}^{-1}\mathcal{A}v)_{t} & = -R^{-1}_{t}\bar{S}^{\intercal}_{t}\tilde{y}^{0}_{t} + \mathcal{A}v_{t}\\
            & = -R^{-1}_{t}\bar{S}^{\intercal}_{t}\tilde{y}^{0}_{t} + R^{-1}_{t}\bar{S}^{\intercal}_{t}y^{0}_{t} + v_{t}\\
            & = R^{-1}_{t}\bar{S}^{\intercal}_{t}\left(y^{0}_{t} - \tilde{y}^{0}_{t}\right) + v_{t}
        \end{split}
    \end{equation}
    where 
    \begin{equation}
        \begin{array}{cc}
            dy^{0}_{t} = [(A_{t} + F_{t})y^{0}_{t} + B_{t}v_{t}]dt,  & \quad y^{0}_{0}=0, \\
            \qquad \qquad \qquad \quad \;\; d\tilde{y}^{0}_{t} = \left[\left(A_{t} + F_{t} - B_{t}R^{-1}_{t}\bar{S}^{\intercal}_{t}\right)\tilde{y}^{0}_{t} + B_{t}(\mathcal{A}v)_{t}\right]dt, &
                    \quad \tilde{y}^{0}_{0} = 0.
        \end{array}
    \end{equation}
    Substituting $(\mathcal{A}v)_{t}$ from \eqref{operator-def} in the above equation, we observe that $y^0_t-\tilde{y}^0_t$ satisfies
    \begin{equation}
        \begin{split}
            &d\left(y^{0}_{t} - \tilde{y}^{0}_{t}\right) = \left(A_{t} + F_{t} - B_{t}R^{-1}_{t}\bar{S}^{\intercal}_{t}\right)\left(y^{0}_{t} - \tilde{y}^{0}_{t}\right)dt, \quad y^{0}_{0} - \tilde{y}^{0}_{0} = 0,
        \end{split}
    \end{equation} 
    which implies that $y^{0}_{t} - \tilde{y}^{0}_{t} = 0,\, dt\otimes d\mathbb{P}-a.s.$, and consequently 
    $(\mathcal{A}^{-1}\mathcal{A}v)_{t} = v_{t}$. 

    Using \eqref{operator-def}, we have 
    \begin{align}
    \label{UConv2}        \E\int_{0}^{T}v^{\intercal}_{t}v_{t}dt &= \E\int_{0}^{T}\left(\mathcal{A}^{-1}\mathcal{A}v\right)^{\intercal}_{t}\left(\mathcal{A}^{-1}\mathcal{A}v\right)_{t}dt\allowdisplaybreaks\\
            &\leq \Vert \mathcal{A}^{-1}\Vert^{2}_{\mathcal{L}}\,\E\int_{0}^{T}\left(\mathcal{A}v\right)^{\intercal}_{t}\left(\mathcal{A}v\right)_{t}dt\allowdisplaybreaks\\
            & = \Vert \mathcal{A}^{-1}\Vert^{2}_{\mathcal{L}}\,\E\int_{0}^{T}\left(R^{-1}_{t}\bar{S}^{\intercal}_{t}y^{0}_{t} + v_{t}\right)^{\intercal}\left(R^{-1}_{t}\bar{S}^{\intercal}_{t}y^{0}_{t} + v_{t}\right)dt,
    \end{align} 
    which results in 
    \begin{equation} \label{fin-res}       \E\int_{0}^{T}\left(R^{-1}_{t}\bar{S}^{\intercal}_{t}y^{0}_{t} + v_{t}\right)^{\intercal}\left(R^{-1}_{t}\bar{S}^{\intercal}_{t}y^{0}_{t} + v_{t}\right)dt \geq \Vert \mathcal{A}^{-1}\Vert^{-2}_{\mathcal{L}}\,\E\int_{0}^{T}v^{\intercal}_{t}v_{t}dt,  
    \end{equation}
    where $\Vert \mathcal{A}^{-1}\Vert_{\mathcal{L}}:=\displaystyle \sup\left\{\Vert \mathcal{A}^{-1}v\Vert_{L_{\mathcal{F}^{0}}^{2}}\, \big| \, v \in\mathcal{\bar{U}}\mbox{ and } \Vert v\Vert_{L_{\mathcal{F}^{0}}^{2}} = 1\right\}$ is the linear operator norm of $\mathcal{A}^{-1}$. From \eqref{operator-def} and \eqref{fin-res}, it follows that \eqref{UnifCovexCondBar} holds with $\delta_{1} = \delta \Vert \mathcal{A}^{-1}\Vert^{-2}_{\mathcal{L}}$. 
\end{proof}

Thus far, our main objective has been to demonstrate that the solvability of the considered McKean-Vlasov Control Problem, namely \hyperref[MTProb]{\color{black}\thnameref{MTProb}}, is equivalent to that of both \hyperref[DecouplProb1]{\color{black}\thnameref{DecouplProb1}} and \hyperref[DecouplProb2]{\color{black}\thnameref{DecouplProb2}}. Moreover, we showed that, under \Cref{ass:init-cond}, the sum of the optimal controls corresponding to \hyperref[DecouplProb1]{\color{black}\thnameref{DecouplProb1}} and \hyperref[DecouplProb2]{\color{black}\thnameref{DecouplProb2}} yields the optimal control for \hyperref[MTProb]{\color{black}\thnameref{MTProb}}. \hyperref[DecouplProb1]{\color{black}\thnameref{DecouplProb1}} and \hyperref[DecouplProb2]{\color{black}\thnameref{DecouplProb2}} can be solved using classical methods. In the next section, we employ a variational method to complete our analysis. Moreover, while the standard dynamic programming method can clearly be applied to \hyperref[DecouplProb1]{\color{black}\thnameref{DecouplProb1}}, in \ref{AppendixB} we show that it can also be used to address \hyperref[DecouplProb2]{\color{black}\thnameref{DecouplProb2}}.

\subsection{Characterization of Optimal Control}
We now establish necessary and sufficient conditions of optimality for \hyperref[MTProb]{\color{black}\thnameref{MTProb}} by leveraging those derived for \hyperref[DecouplProb1]{\color{black}\thnameref{DecouplProb1}} and \hyperref[DecouplProb2]{\color{black}\thnameref{DecouplProb2}}. Subsequently, we provide a complete characterization of the optimal control for \hyperref[MTProb]{\color{black}\thnameref{MTProb}} in terms of two decoupled sets of classical linear forward-backward stochastic differential equations (FBSDEs), and ultimately in feedback form. To this end, we first introduce the set 
\begin{equation*}
\mathcal{M}_{\mathcal{F}}\left([0,T],\mathbb{R}^{n}\right):=\left\{\left(\lambda,\beta,\beta^{0}\right)\in\mathcal{C}_{\mathcal{F}}\left([0,T],\mathbb{R}^{n}\right)\times L_{\mathcal{F}}^{2}\left( [0,T],\mathbb{R}^{n}\right) \times L_{\mathcal{F}}^{2}\left( [0,T],\mathbb{R}^{n}\right): \E\left[\beta^{0}_{t} \big| \mathcal{F}^{0}_{t}\right]=0,\,dt\otimes d\mathbb{P}-a.s.\right\}.
\end{equation*}

\begin{theorem}(Necessary and Sufficient Conditions of Optimality)\label{OptimPrincMFT}
        Suppose that \Cref{A2} holds. An admissible pair $(x^{*},u^{*})\in\mathcal{C}_{\mathcal{F}}\left([0,T],\mathbb{R}^{n}\right)\times\mathcal{U}$ is optimal for \hyperref[MTProb]{\color{black}\thnameref{MTProb}} if and only if 
            \begin{enumerate}[(i)]
                \item the pair  $(\bar{x}^{*}_{t},\,\bar{u}^{*}_{t}) = \left(\E[x^{*}_{t}|\mathcal{F}_{t}^0],\,\E[u^{*}_{t}|\mathcal{F}_{t}^0]\right)\in \mathcal{C}_{\mathcal{F}^{0}}\left([0,T],\mathbb{R}^{n}\right)\times\bar{\mathcal{U}}$ satisfies  
                \begin{align}   &R_{t}\bar{u}^{*}_{t} + \bar{S}_{t}^{\intercal}\bar{x}^{*}_{t} + B_{t}^{\intercal}p_{t} + \varpi_{t}=0,\quad dt\otimes d\mathbb{P}-a.s,\label{opt-control-prob1}\allowdisplaybreaks\\
                &d\bar{x}^{*}_{t} = [(A_{t} + F_{t})\bar{x}^{*}_{t} + B_{t}\bar{u}^{*}_{t} + b_{t}]dt + D^{0}_{t} dW_{t}^{0},\quad \bar{x}^{*}_{0} =\E[\xi], \label{BarFBSDE1}     
                \end{align} where $(p,q)\in L_{\mathcal{F}^{0}}^{2}\left( [0,T],\mathbb{R}^{n}\right) \times L_{\mathcal{F}^{0}}^{2}\left( [0,T],\mathbb{R}^{n}\right)$ is the adapted solution of the following BSDE 
                \begin{equation}\label{BarFBSDE2}
                        dp_{t} = -\left[(A_{t} + F_{t})^{\intercal}p_{t} + \bar{Q}_{t}\bar{x}^{*}_{t} + \bar{S}_{t}\bar{u}^{*}_{t} + \bar{\zeta}_{t}\right]dt + q_{t}dW_{t}^{0},\quad p_{T} = \bar{Q}_{T}\bar{x}^{*}_{T}, 
                \end{equation} 
                and
                \item the pair  $(\breve{x}^{*}_{t},\, \breve{u}^{*}_{t}) = \left(x^{*}_{t} - \E[x^{*}_{t}|\mathcal{F}_{t}^0],\,u^{*}_{t} - \E[u^{*}_{t}|\mathcal{F}_{t}^0]\right)\in \mathcal{C}_{\mathcal{F}}\left([0,T],\mathbb{R}^{n}\right)\times\breve{\mathcal{U}}$ satisfies  
                \begin{align}        
                &R_{t}\breve{u}^{*}_{t} + S_{t}^{\intercal}\breve{x}^{*}_{t} + B_{t}^{\intercal}\lambda_{t} = 0,\quad dt\otimes d\mathbb{P}-a.s,\label{opt-control-prob2}\allowdisplaybreaks\\
                        &d\breve{x}^{*}_{t} = \left[A_{t}\breve{x}^{*}_{t} + B_{t}\breve{u}^{*}_{t}\right]dt + D_{t} dW_{t},\quad \breve{x}^{*}_{0} =\xi - \E[\xi],\label{BrevFBSDE1}  
                \end{align} where $\left(\lambda,\beta,\beta^{0}\right)\in \mathcal{M}_{\mathcal{F}}\left([0,T],\mathbb{R}^{n}\right)$ is the adapted solution of the following BSDE
                \begin{equation}\label{BrevFBSDE2}
                        d\lambda_{t} = -\left[A^{\intercal}_{t}\lambda_{t} + Q_{t}\breve{x}^{*}_{t} + S_{t}\breve{u}^{*}_{t}\right]dt + \beta_{t}dW_{t} + \beta^{0}_{t}dW^{0}_{t}, \quad\lambda_{T} = Q_{T}\breve{x}^{*}_{T}.
                \end{equation} 
            \end{enumerate}
    \end{theorem}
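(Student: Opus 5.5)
By \Cref{KeyTheo}, $(x^{*},u^{*})$ is optimal for \hyperref[MTProb]{\color{black}\thnameref{MTProb}} if and only if $(\bar{x}^{*},\bar{u}^{*})$ is optimal for \hyperref[DecouplProb1]{\color{black}\thnameref{DecouplProb1}} and $(\breve{x}^{*},\breve{u}^{*})$ is optimal for \hyperref[DecouplProb2]{\color{black}\thnameref{DecouplProb2}}. For the ``if'' direction, given optimal pairs of the auxiliary problems, part (a) of \Cref{KeyTheo} applied to these same components shows that their sum $(x^{*},u^{*})$ is optimal. It therefore suffices to prove two separate statements: (i) is necessary and sufficient for optimality in \hyperref[DecouplProb1]{\color{black}\thnameref{DecouplProb1}}, and (ii) is necessary and sufficient for optimality in \hyperref[DecouplProb2]{\color{black}\thnameref{DecouplProb2}}. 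By \Cref{UnifConvChara}, both $\bar{J}$ and $\breve{J}$ are (uniformly) convex quadratic functionals on the Hilbert spaces $\bar{\mathcal{U}}$ and $\breve{\mathcal{U}}$. Hence optimality is equivalent to the vanishing of the Gâteaux derivative in every admissible direction. Concretely, the identity $\bar{J}(v^{*}+\varepsilon v)=\bar{J}(v^{*})+\varepsilon\,\langle \cdot ,v\rangle+\varepsilon^{2}\bar{J}^{0}(v)$ holds, where $\bar{J}^{0}\ge 0$ is the homogeneous cost, and similarly for $\breve{J}$.

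For \hyperref[DecouplProb1]{\color{black}\thnameref{DecouplProb1}}, this is a classical LQ problem with $\mathcal{F}^{0}$-adapted random coefficients. Perturb $\bar{u}^{*}$ by $\varepsilon v$, $v\in\bar{\mathcal{U}}$; the state perturbation $y^{0}$ solves \eqref{ZeroBarFBSDE}. The BSDE \eqref{BarFBSDE2} has a unique adapted solution in the $\mathcal{F}^{0}$ filtration by standard linear BSDE theory with bounded coefficients. Apply Itô's formula to $p_t^{\intercal}y^{0}_t$ and use $p_T=\bar{Q}_T\bar{x}^{*}_T$ and $y^0_0=0$. The first variation then becomes
\[
\E\int_0^T\left(R_t\bar{u}^{*}_t+\bar{S}_t^{\intercal}\bar{x}^{*}_t+B_t^{\intercal}p_t+\varpi_t\right)^{\intercal}v_t\,dt .
\]
Since the bracket is $\mathcal{F}^{0}$-progressively measurable and $v$ ranges over all of $\bar{\mathcal{U}}$, the variation vanishes for all $v$ if and only if \eqref{opt-control-prob1} holds. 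This gives (i).

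For \hyperref[DecouplProb2]{\color{black}\thnameref{DecouplProb2}}, perform the same computation in the full filtration $\mathcal{F}$. Perturb by $\varepsilon\alpha$, $\alpha\in\breve{\mathcal{U}}$; the state perturbation $z^{0}$ solves \eqref{ZeroBrevFBSDE}. The BSDE \eqref{BrevFBSDE2} is driven by both $W$ and $W^{0}$ and has a unique adapted solution $(\lambda,\beta,\beta^{0})$. Itô's formula for $\lambda_t^{\intercal}z^{0}_t$ reduces the first variation to
\[
\E\int_0^T\Gamma_t^{\intercal}\alpha_t\,dt,\qquad \Gamma_t:=R_t\breve{u}^{*}_t+S_t^{\intercal}\breve{x}^{*}_t+B_t^{\intercal}\lambda_t .
\]

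The main obstacle is the constraint $\E[\alpha_t|\mathcal{F}^0_t]=0$. Vanishing of the variation over all $\alpha\in\breve{\mathcal{U}}$ only forces $\Gamma_t-\E[\Gamma_t|\mathcal{F}^0_t]=0$, because $\breve{\mathcal{U}}$ is the orthogonal complement of $\bar{\mathcal{U}}$ (\Cref{Remark1}). To upgrade this to $\Gamma=0$, I would show that $\E[\lambda_t|\mathcal{F}^0_t]=0$, which also justifies the requirement $(\lambda,\beta,\beta^{0})\in\mathcal{M}_{\mathcal{F}}$. Take conditional expectations given $\mathcal{F}^0_t$ in \eqref{BrevFBSDE2}. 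The coefficients are $\mathcal{F}^0$-adapted, $W$ is independent of $W^0$, and $\E[\breve{x}^{*}_t|\mathcal{F}^0_t]=\E[\breve{u}^{*}_t|\mathcal{F}^0_t]=0$. It follows that $\bar{\lambda}_t:=\E[\lambda_t|\mathcal{F}^0_t]$, together with $\E[\beta^{0}_t|\mathcal{F}^0_t]$, solves the linear BSDE $d\bar{\lambda}_t=-A_t^{\intercal}\bar{\lambda}_t\,dt+\E[\beta^0_t|\mathcal{F}^0_t]\,dW^0_t$ with $\bar{\lambda}_T=0$. Here I would use the standard conditional-expectation property of stochastic integrals: $\E[\int\beta\,dW|\mathcal{F}^0]=0$ and $\E[\int\beta^0dW^0|\mathcal{F}^0_T]=\int\E[\beta^0|\mathcal{F}^0]dW^0$. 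By uniqueness this BSDE has only the zero solution, so $\bar{\lambda}=0$ and $\E[\beta^0_t|\mathcal{F}^0_t]=0$. Consequently $\E[\Gamma_t|\mathcal{F}^0_t]=0$ automatically, so $\Gamma\in$ the closure of $\breve{\mathcal{U}}$ itself. Choosing $\alpha=\Gamma$ then yields $\Gamma=0$, which is \eqref{opt-control-prob2}. Conversely, \eqref{opt-control-prob2} makes the first variation zero, and convexity gives optimality. This gives (ii); combining it with (i) and \Cref{KeyTheo} completes the proof.
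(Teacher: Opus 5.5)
Your proposal is correct and shares the paper's skeleton: reduction via \Cref{KeyTheo}, then a first-variation/adjoint computation for each auxiliary problem. Where it differs is the key step handling the constraint $\E[\alpha_t|\mathcal{F}^0_t]=0$ in the second auxiliary problem.

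You note that testing the first variation only against $\alpha\in\breve{\mathcal{U}}$ forces just $\Gamma-\E[\Gamma|\mathcal{F}^0]=0$. You close this by projecting the adjoint BSDE onto $\mathcal{F}^0$. For any admissible pair, $\E[\lambda_t|\mathcal{F}^0_t]$ then solves a homogeneous linear BSDE with zero terminal value, so it vanishes together with $\E[\beta^0_t|\mathcal{F}^0_t]$. Hence $\Gamma\in\breve{\mathcal{U}}$, and testing with $\alpha=\Gamma$ gives $\Gamma=0$.

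The paper instead asserts the equivalence with the stationarity condition directly from convexity, and handles the constraint afterwards. It substitutes the stationarity condition into the Hamiltonian system and makes the ansatz $\lambda_t=\Pi_t z_t$, with $\Pi$ solving a stochastic Riccati equation whose solvability it imports from the literature. It then shows that $\E[z_t|\mathcal{F}^0_t]$ solves a homogeneous linear equation started at zero. Consequently the candidate control and $\beta^0=\Psi^0 z$ satisfy the constraints.

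Your argument is more elementary, since it needs no Riccati theory. It also proves necessity directly for an arbitrary optimal pair, which is exactly the step the paper leaves implicit. In the paper, necessity only follows indirectly, from existence of the Riccati-based admissible candidate combined with uniqueness under uniform convexity. The paper's route, in return, produces an explicit admissible optimal control and the Riccati equation reused in \Cref{thm:feedback-rep}.
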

\begin{proof}
    We will first characterize the optimal controls for \hyperref[DecouplProb1]{\color{black}\thnameref{DecouplProb1}} and \hyperref[DecouplProb2]{\color{black}\thnameref{DecouplProb2}}, then obtain the characterization of the optimal control for \hyperref[MTProb]{\color{black}\thnameref{MTProb}}. 
        
  We begin with \hyperref[DecouplProb2]{\color{black}\thnameref{DecouplProb2}}, which is more involved, and employ a perturbation method, following the approach in \cite{firoozi2020convex, Sun2017}, to address it. Consider a positive constant $\mu>0$ and any two admissible control inputs  $\alpha^{*},\alpha\in\breve{\mathcal{U}}$. For every $t\in [0,T]$, the state process  $z^{\alpha^{*} + \mu\alpha}$, satisfying \eqref{BrevState} under the control input  $\alpha^{*} + \mu\alpha$, is decomposed as 
  \begin{equation}
      z^{\alpha^{*} + \mu\alpha}_{t} = z_{t} + \mu z^{0}_{t}, 
  \end{equation} where $z$ denotes the state process satisfying \eqref{BrevState}  under the control input $\alpha^{*}$, and $z^{0}$ satisfies
    \begin{equation}
        dz^{0}_{t} = [A_{t}z^{0}_{t} + B_{t}\alpha_{t}]dt, \quad z^{0}_{0} = 0.
    \end{equation} Subsequently, the cost functional $\breve{J}\left(\alpha^{*} + \mu \alpha\right)$ is decomposed as  
    \begin{align}
       \label{VarBarCost}
        \breve{J}\left(\alpha^{*} + \mu \alpha\right) &= \breve{J}\left(\alpha^{*}\right) + \frac{\mu^{2}}{2}\E\left\{\int_{0}^{T}\left[{z^{0}_{t}}^{\intercal}Q_{t}z^{0}_{t}+2{z^{0}_{t}}^{\intercal}S_{t}\alpha_{t} + {\alpha_{t}}^{\intercal}R_{t}\alpha_{t}\right]dt + {z^{0}_{T}}^{\intercal}Q_{T}z^{0}_{T}\right\}\nonumber\allowdisplaybreaks\\
        &\quad + \mu\E\left\{\int_{0}^{T}\left[{z_{t}}^{\intercal}Q_{t}z^{0}_{t} + \alpha^{\intercal}_{t}S^{\intercal}_{t}z_{t} + {z^{0}_{t}}^{\intercal}S_{t}\alpha^{*}_{t} + {\alpha_{t}}^{\intercal}R_{t}\alpha^{*}_{t}\right]dt + {z^{0}_{T}}^{\intercal}Q_{T}z_{T}\right\}.
    \end{align}
    Moreover, consider the  processes $\left(\lambda,\beta,\beta^{0}\right)\in\mathcal{M}_{\mathcal{F}}\left([0,T],\mathbb{R}^{n}\right)$ satisfying     \begin{equation}\label{obtained-adjoint}
        d\lambda_{t} = -\left[A^{\intercal}_{t}\lambda_{t} + Q_{t}z_{t} + S_{t}\alpha^{*}_{t}\right]dt + \beta_{t}dW_{t} + \beta^{0}_{t}dW^{0}_{t}, \quad \lambda_{T} = Q_{T}z_{T}.
    \end{equation}
    By applying Itô's formula to ${z^0_t}^{\intercal}\lambda_t$, we obtain 
    \begin{equation}
        \begin{split}
            d\left({z^{0}_{t}}^{\intercal}\lambda_{t}\right) &= \left[A_{t}z^{0}_{t} + B_{t}\alpha_{t}\right]^{\intercal}\lambda_{t}dt - {z^{0}_{t}}^{\intercal}\left[A^{\intercal}_{t}\lambda_{t} + Q_{t}z_{t} + S_{t}\alpha^{*}_{t}\right]dt + {z^{0}_{t}}^{\intercal}\beta_{t}dW_{t} + {z^{0}_{t}}^{\intercal}\beta^{0}_{t}dW^{0}_{t},\\
            &= \left[ \alpha^{\intercal}_{t}B^{\intercal}_{t}\lambda_{t} - {z^{0}_{t}}^{\intercal}Q_{t}z_{t} - {z^{0}_{t}}^{\intercal}S_{t}\alpha^{*}_{t}\right]dt + {z^{0}_{t}}^{\intercal}\beta_{t}dW_{t} + {z^{0}_{t}}^{\intercal}\beta^{0}_{t}dW^{0}_{t}.
        \end{split} 
    \end{equation} 
    Next, by integrating both sides of the above equation over $[0, T]$ and taking expectations, we obtain 
    \begin{equation}\label{ExpZeroTErmCost}
        \E\left[{z^{0}_{T}}^{\intercal}Q_{T}z_{T}\right] = \E\int^{T}_{0}\left[ \alpha^{\intercal}_{t}B^{\intercal}_{t}\lambda_{t} - {z^{0}_{t}}^{\intercal}Q_{t}z_{t} - {z^{0}_{t}}^{\intercal}S_{t}\alpha^{*}_{t}\right]dt.
    \end{equation}
    Then, by substituting \eqref{ExpZeroTErmCost} into \eqref{VarBarCost}, we obtain
        \begin{align}
            \breve{J}\left(\alpha^{*} + \mu \alpha\right) &= \breve{J}\left(\alpha^{*}\right) + \frac{\mu^{2}}{2}\E\left\{\int_{0}^{T}\left[{z^{0}_{t}}^{\intercal}Q_{t}z^{0}_{t}+2{z^{0}_{t}}^{\intercal}S_{t}\alpha_{t} + {\alpha_{t}}^{\intercal}R_{t}\alpha_{t}\right]dt + {z^{0}_{T}}^{\intercal}Q_{T}z^{0}_{T}\right\}\nonumber\allowdisplaybreaks\\
            &\quad + \mu\E\int_{0}^{T}\alpha^{\intercal}_{t}\left[ R_{t}\alpha^{*}_{t} + S^{\intercal}_{t}z_{t} + B^{\intercal}_{t}\lambda_{t}\right]dt,\quad \forall \mu>0,\quad \forall\alpha^{*},\alpha\in\breve{\mathcal{U}}.
        \end{align}
    Consequently, under \Cref{A2}, $\alpha^{*}\in\breve{\mathcal{U}}$ is an optimal control for \hyperref[DecouplProb2]{\color{black}\thnameref{DecouplProb2}} if and only if 
            \begin{align}
                \breve{J}\left(\alpha^{*} + \mu \alpha\right) - \breve{J}\left(\alpha^{*}\right) &= \frac{\mu^{2}}{2}\E\left\{\int_{0}^{T}\left[{z^{0}_{t}}^{\intercal}Q_{t}z^{0}_{t}+2{z^{0}_{t}}^{\intercal}S_{t}\alpha_{t} + {\alpha_{t}}^{\intercal}R_{t}\alpha_{t}\right]dt + {z^{0}_{T}}^{\intercal}Q_{T}z^{0}_{T}\right\}\nonumber\allowdisplaybreaks\\
            &\quad + \mu\E\left\{\int_{0}^{T}\alpha^{\intercal}_{t}\left[ R_{t}\alpha^{*}_{t} + S^{\intercal}_{t}z_{t} + B^{\intercal}_{t}\lambda_{t}\right]dt\right\} \geq0,\quad \forall\mu>0,\quad \forall \alpha\in\breve{\mathcal{U}}.
            \end{align} 
 From  \eqref{UnifCovexCondBrev}, the above inequality holds if and only if 
        \begin{equation}\label{optim:cond:brev}
            R_{t}\alpha^{*}_{t} + S^{\intercal}_{t}z_{t} + B^{\intercal}_{t}\lambda_{t} = 0,\quad dt\otimes d\mathbb{P}-a.s.
        \end{equation} Hence, the optimal control is characterized by
        \begin{equation}\label{obtained-control}
            \alpha^{*}_{t} = -R^{-1}_{t}\left[S_{t}^{\intercal}z_{t} + B_{t}^{\intercal}\lambda_{t}\right],\quad dt\otimes d\mathbb{P}-a.s,
        \end{equation} 
        
         where $(z,\lambda,\beta,\beta^{0})\in \mathcal{C}_{\mathcal{F}}\left([0,T],\mathbb{R}^{n}\right)\times\mathcal{M}_{\mathcal{F}}\left([0,T],\mathbb{R}^{n}\right)$ is the  solution of the coupled FBSDEs given by 
    \begin{equation}\label{OptimBrevHamil:proof}
        \begin{cases}
            dz_{t} = \left[\left(A_{t} - B_{t}R^{-1}_{t}S^{\intercal}_{t}\right)z_{t} - B_{t}R^{-1}_{t}B^{\intercal}_{t}\lambda_{t}\right]dt + D_{t} dW_{t},\qquad \qquad \qquad \qquad \qquad \,z_0 =\xi - \mathbb{E}[\xi],\\
            d\lambda_{t} = -\left[\left(A_{t} - B_{t}R^{-1}_{t}S^{\intercal}_{t}\right)^{\intercal}\lambda_{t} + \left(Q_{t} - S_{t}R^{-1}_{t}S^{\intercal}_{t}\right)z_{t}\right]dt + \beta_{t}dW_{t} + \beta_t^{0}dW^{0}_{t}, \quad \lambda_{T} = Q_{T}z_{T},
        \end{cases}
    \end{equation}
    which are obtained by substituting \eqref{obtained-control} in \eqref{BrevState} and \eqref{obtained-adjoint}. Now, since the admissible control set $\breve{\mathcal{U}}: = \big\{ \alpha \in L_{\mathcal{F}}^{2}\left( [0,T],\mathbb{R}^{d}\right):\allowdisplaybreaks \E\left[\alpha_{t} \big| \mathcal{F}^{0}_{t}\right]=0,\,dt\otimes d\mathbb{P}-a.s. \big\}$ is constrained, we examine the admissibility of the obtained optimal control $\alpha^\ast$. First, subject to the conditions imposed on the relevant coefficients in \hyperref[MTProb]{\color{black}\thnameref{MTProb}}, the system \eqref{OptimBrevHamil:proof} is well defined and the optimal control $\alpha^*$ given by \eqref{obtained-control} belongs to $L_{\mathcal{F}}^{2}\left( [0,T],\mathbb{R}^{d}\right)$. Next, we show that $\E\left[\alpha^*_t \big | \mathcal{F}^{0}_{t}\right] = 0,\, dt\otimes d\mathbb{P}-a.s.$ To this end, we adopt the ansatz $\lambda_{t} = \Pi_{t}z_{t}$, and apply Itô's lemma to derive the BSDE satisfied by $\Pi_{t}z_{t}$. Then, we substitute this ansatz into the FBSDE given by \eqref{OptimBrevHamil:proof} and match its backward component with the BSDE obtained for $\Pi_tz_t$, which yields the stochastic Riccati equations satisfied by $\Pi_t$, given by    \begin{align}\label{Ricatti}
        d\Pi_{t} =& -\left[\Pi_{t}\left(A_{t} - B_{t}R^{-1}_{t}S^{\intercal}_{t}\right) + \left(A_{t} - B_{t}R^{-1}_{t}S^{\intercal}_{t}\right)^{\intercal}\Pi_{t} - \Pi_{t}B_{t}R_{t}^{-1}B_{t}^{\intercal}\Pi_{t} + Q_{t} - S_{t}R_{t}^{-1}S_{t}^{\intercal}\right]dt\notag\\
        &+\Psi_{t}^{0}dW_{t}^{0},\quad \Pi_{T} = Q_{T},
    \end{align} and 
    \begin{equation}
            \beta_{t} = \Pi_{t}D_{t},\quad \beta_t^{0} = \Psi^{0}_{t}z_{t}.
    \end{equation} 
    Moreover, after substituting $\alpha^{*}_{t} = -R^{-1}_{t}\left(S_{t}^{\intercal}+ B_{t}^{\intercal}\Pi_t\right)z_{t}$, the forward component in \eqref{OptimBrevHamil:proof} becomes
    \begin{equation}
        dz_{t} = \left(A_{t} - B_{t}R^{-1}_{t}{(S_{t}+\Pi_{t}B_{t})}^{\intercal}\right)z_{t}dt + D_{t} dW_{t}, \quad z_{0} = \xi - \E[\xi],
    \end{equation} 
    and hence, we have  
        \begin{equation}\label{OptimBrevHamilExpec:proof}
            d\E\left[z_{t} \big | \mathcal{F}^{0}_{t}\right] = \left(A_{t} - B_{t}R^{-1}_{t}{(S_{t}+\Pi_{t}B_{t})}^{\intercal}\right)\E\left[z_{t} \big | \mathcal{F}^{0}_{t}\right]dt,\quad \E\left[ z_{0}\right] = 0,
    \end{equation} 
    According to \cite{Tang2003}, under \Cref{A2}, the stochastic Riccati equation given by \eqref{Ricatti} admits a unique solution $\left(\Pi,\Psi^{0}\right)\in L_{\mathcal{F}^{0}}^{\infty}\left( [0,T],\mathbb{S}^{n}\right)\times L_{\mathcal{F}^{0}}^{2}\left([0,T],\mathbb{R}^{n}\right)$ with $\Pi_{t}\geq 0\,\,dt\otimes d\mathbb{P}-a.s.$ Thus, \eqref{OptimBrevHamilExpec:proof} is well defined and admits the unqiue solution given by
    \begin{equation}
        \E\left[ z_{t} \big | \mathcal{F}^{0}_{t}\right] = 0,\quad dt \otimes d\mathbb{P}-a.s.\label{sol1}
        \end{equation} 
    Consequently, we have    
    \begin{align}
     &\E\left[\alpha^*_t \big | \mathcal{F}^{0}_{t}\right]=-R^{-1}_{t}\left(S_{t}^{\intercal}+ B_{t}^{\intercal}\Pi_t\right)\E\left[z_{t} \big | \mathcal{F}^{0}_{t}\right]=0, \qquad 
        dt\otimes d\mathbb{P}-a.s,\\
    & \E\left[\beta^0_t \big | \mathcal{F}^{0}_{t}\right] = \Psi^0_t\E\left[z_{t} \big | \mathcal{F}^{0}_{t}\right] = 0, \qquad\qquad\qquad\qquad\quad\,\, dt\otimes d\mathbb{P}-a.s, \label{sol2}
    \end{align}
    which establishes that both the optimal control $\alpha^*$ and the  processes  $\left(\lambda,\beta,\beta^{0}\right)$ lie within their respective constrained admissible sets, i.e. $\breve{\mathcal{U}}: = \left\{ \alpha \in L_{\mathcal{F}}^{2}\left( [0,T],\mathbb{R}^{d}\right):\allowdisplaybreaks \E\left[\alpha_{t} \big| \mathcal{F}^{0}_{t}\right]=0,\,dt\otimes d\mathbb{P}-a.s. \right\}$ and $\mathcal{M}_{\mathcal{F}}\left([0,T],\mathbb{R}^{n}\right):=\left\{\left(\lambda,\beta,\beta^{0}\right)\in\mathcal{C}_{\mathcal{F}}\left([0,T],\mathbb{R}^{n}\right)\times L_{\mathcal{F}}^{2}\left( [0,T],\mathbb{R}^{n}\right) \times L_{\mathcal{F}}^{2}\left( [0,T],\mathbb{R}^{n}\right): \E\left[\beta^{0}_{t} \big| \mathcal{F}^{0}_{t}\right]=0,\,dt\otimes d\mathbb{P}-a.s.\right\}$, respectively.    
        
    We now follow a similar procedure to characterize the optimal control for  \hyperref[DecouplProb1]{\color{black}\thnameref{DecouplProb1}}. Consider the constant $\mu>0$ and any two admissible $v^{*},v\in\bar{\mathcal{U}}$. For every $t\in[0,T]$, the state process $y^{v^{*} + \mu v}$, satisfying \eqref{BarState} under the control input $v^{*} + \mu v$, is decomposed as 
    \begin{equation}
        y^{v^{*} + \mu v}_{t} = y_{t} + \mu y^{0}_{t}.
    \end{equation}
    Subsequently, the cost functional $\bar{J}\left(v^{*} + \mu v\right)$ is decomposed as  
    \begin{align}
            \bar{J}\left(v^{*} + \mu v\right) &= \bar{J}\left(v^{*}\right) + \frac{\mu^{2}}{2}\E\left\{\int_{0}^{T}\left[{y^{0}_{t}}^{\intercal}\bar{Q}_{t}y^{0}_{t} + 2{y^{0}_{t}}^{\intercal}\bar{S}_{t}v_{t} + {v_{t}}^{\intercal}R_{t}v_{t}\right]dt + {y^{0}_{T}}^{\intercal}\bar{Q}_{T}y^{0}_{T}\right\}\nonumber\allowdisplaybreaks\\
            &\quad + \mu \E\int_{0}^{T}v^{\intercal}_{t}\left[R_{t}v^{*}_{t} + \bar{S}_{t}^{\intercal}y_{t} + B_{t}^{\intercal}p_{t} + \varpi_{t}\right]dt,
    \end{align} where $y,y^{0}\in\mathcal{C}_{\mathcal{F}^{0}}\left([0,T],\mathbb{R}^{n}\right)$ and $(p,q)\in L_{\mathcal{F}^{0}}^{2}\left( [0,T],\mathbb{R}^{n}\right) \times L_{\mathcal{F}^{0}}^{2}\left( [0,T],\mathbb{R}^{n}\right)$, respectively, satisfy
        \begin{align}
            &dy_{t} = [(A_{t} + F_{t})y_{t} + B_{t}v^{*}_{t} + b_{t}]dt + D^{0}_{t} dW_{t}^{0},\qquad \qquad \quad \; y_{0} =\bar{\xi},\nonumber\allowdisplaybreaks\\
            &dy^{0}_{t} = [(A_{t} + F_{t})y^{0}_{t} + B_{t}v_{t}]dt, \qquad \qquad\qquad \qquad\qquad \quad \;\;\; y^{0}_{0} = 0,\label{P1-eqs}\allowdisplaybreaks\\
            &dp_{t} = -\left[(A_{t} + F_{t})^{\intercal}p_{t} + \bar{Q}_{t}y_{t} + \bar{S}_{t}v^{*}_{t} + \bar{\zeta}_{t}\right]dt + q_{t}dW_{t}^{0},\quad p_{T} = \bar{Q}_{T}y_{T}\nonumber.
        \end{align}
    
    Then, under \Cref{A2} and from \eqref{UnifCovexCondBar}, a control input $v^{*}$ is a minimizer for $\bar{J}(v)$ if and only if 
    \begin{equation}\label{optim:cond:bar}
         R_{t}v^{*}_{t} + \bar{S}_{t}^{\intercal}y_{t} + B_{t}^{\intercal}p_{t} + \varpi_{t} = 0,\quad  dt\otimes d\mathbb{P}-a.s.
    \end{equation} Subsequently, the optimal control is given by 
    \begin{equation}
        v^{*}_{t} = -R^{-1}_{t}\left(\bar{S}_{t}^{\intercal}y_{t} + B_{t}^{\intercal} p_{t} + \varpi_{t}\right),\quad dt\otimes d\mathbb{P}-a.s,
    \end{equation} 
    where $(y,p,q)\in \mathcal{C}_{\mathcal{F}^{0}}\left([0,T],\mathbb{R}^{n}\right)\times L_{\mathcal{F}^{0}}^{2}\left( [0,T],\mathbb{R}^{n}\right) \times L_{\mathcal{F}^{0}}^{2}\left( [0,T],\mathbb{R}^{n}\right)$ is the adapted solution of the FBSDEs given by
    \begin{equation}\label{OptimBarHamil:proof}
        \begin{cases}
            dy_{t} = \left[\left(A_{t} + F_{t} - B_{t}R^{-1}_{t}\bar{S}^{\intercal}_{t}\right)y_{t} - B_{t}R^{-1}_{t}B^{\intercal}_{t}p_{t} - B_{t}R^{-1}_{t}\varpi_{t} + b_{t}\right]dt + D_{t}^{0} dW_{t}^0, \qquad \qquad \quad y_{0} =\E[\xi],\\
            dp_{t} = -\left[\left(A_{t} + F_{t} - B_{t}R^{-1}_{t}\bar{S}^{\intercal}_{t}\right)^{\intercal}p_{t} + \left(\bar{Q}_{t} - \bar{S}_{t}R^{-1}_{t}\bar{S}^{\intercal}_{t}\right)y_{t} - \bar{S}_{t}R^{-1}_{t}\varpi_{t} + \bar{\zeta}_{t}\right]dt + q_{t}dW_{t}^{0},\quad p_{T} = \bar{Q}_{T}y_{T}.
        \end{cases}
    \end{equation} 
    Given the conditions imposed on the relevant coefficients in 
\hyperref[MTProb]{\color{black}\thnameref{MTProb}}, 
the system \eqref{OptimBarHamil:proof} is well defined, which in turn ensures the admissibility of \(v^{*}\); that is, 
\(v^{*} \in \bar{\mathcal{U}}:= L_{\mathcal{F}^{0}}^{2}\!\left([0,T], \mathbb{R}^{d}\right)\).
    
    To conclude, according to \Cref{KeyTheo}, and from \eqref{optim:cond:brev}-\eqref{OptimBrevHamil:proof} and \eqref{P1-eqs}-\eqref{optim:cond:bar}, an admissible pair $(x^{*},\,u^{*})\in\mathcal{C}_{\mathcal{F}}\left([0,T],\mathbb{R}^{n}\right)\times\mathcal{U}$ is an optimal pair for \hyperref[MTProb]{\color{black}\thnameref{MTProb}}, if and only if the pair  $(\bar{x}^{*}_{t},\,\bar{u}^{*}_{t}) = \left(\E[x^{*}_{t}|\mathcal{F}_{t}^0],\,\E[u^{*}_{t}|\mathcal{F}_{t}^0]\right)\in \mathcal{C}_{\mathcal{F}^{0}}\left([0,T],\mathbb{R}^{n}\right)\times\bar{\mathcal{U}}$ satisfies \eqref{opt-control-prob1}-\eqref{BarFBSDE2} and the pair  $(\breve{x}^{*}_{t},\, \breve{u}^{*}_{t}) = \left(x^{*}_{t} - \E[x^{*}_{t}|\mathcal{F}_{t}^0],\,u^{*}_{t} - \E[u^{*}_{t}|\mathcal{F}_{t}^0]\right)\in \mathcal{C}_{\mathcal{F}}\left([0,T],\mathbb{R}^{n}\right)\times\breve{\mathcal{U}}$ satisfies \eqref{opt-control-prob2}-\eqref{BrevFBSDE2}. This completes the proof.
\end{proof} 

The following Corollary characterizes the optimal control for \hyperref[MTProb]{\color{black}\thnameref{MTProb}} through two decoupled sets of classical linear FBSDEs. 
\begin{corollary}(Optimal Control)\label{optimal-control-MFT}
 Under 
    \Cref{A2}, the optimal control for \hyperref[MTProb]{\color{black}\thnameref{MTProb}} is given by 
    \begin{equation}\label{OptimControl}
        u^{*}_{t} = -R^{-1}_{t}\left[\bar{S}_{t}^{\intercal}\bar{x}^{*}_{t} + S_{t}^{\intercal}\breve{x}^{*}_{t} + B_{t}^{\intercal}\left(\lambda^{*}_{t} + p^{*}_{t}\right) + \varpi_{t}\right],\quad dt\otimes d\mathbb{P}-a.s,
    \end{equation} 
    where $(\bar{x}^{*},p^{*},q^{*})\in \mathcal{C}_{\mathcal{F}^{0}}\left([0,T],\mathbb{R}^{n}\right)\times L_{\mathcal{F}^{0}}^{2}\left( [0,T],\mathbb{R}^{n}\right) \times L_{\mathcal{F}^{0}}^{2}\left( [0,T],\mathbb{R}^{n}\right)$ is the adapted solution of the coupled FBSDEs given by
    \begin{equation}\label{OptimBarHamil}
        \begin{cases}
            d\bar{x}^{*}_{t} = \left[\left(A_{t} + F_{t} - B_{t}R^{-1}_{t}\bar{S}^{\intercal}_{t}\right)\bar{x}^{*}_{t} - B_{t}R^{-1}_{t}B^{\intercal}_{t}p^{*}_{t} - B_{t}R^{-1}_{t}\varpi_{t} + b_{t}\right]dt + D_{t}^{0} dW_{t}^0, \qquad \qquad \quad \bar{x}^{*}_0 =\E[\xi],\allowdisplaybreaks\\
            dp^{*}_{t} = -\left[\left(A_{t} + F_{t} - B_{t}R^{-1}_{t}\bar{S}^{\intercal}_{t}\right)^{\intercal}p^{*}_{t} + \left(\bar{Q}_{t} - \bar{S}_{t}R^{-1}_{t}\bar{S}^{\intercal}_{t}\right)\bar{x}^{*}_{t} - \bar{S}_{t}R^{-1}_{t}\varpi_{t} + \bar{\zeta}_{t}\right]dt + q^{*}_{t}dW_{t}^{0},\quad p^{*}_{T} = \bar{Q}_{T}\bar{x}^{*}_{T},
        \end{cases}
    \end{equation} and $(\breve{x}^{*},\lambda^{*},\beta^{*},\beta_t^{0,*})\in \mathcal{C}_{\mathcal{F}}\left([0,T],\mathbb{R}^{n}\right)\times\mathcal{M}_{\mathcal{F}}\left([0,T],\mathbb{R}^{n}\right)$ is the adapted solution of the coupled FBSDEs given by
    \begin{equation}\label{OptimBrevHamil}
        \begin{cases}
            d\breve{x}^{*}_{t} = \left[\left(A_{t} - B_{t}R^{-1}_{t}S^{\intercal}_{t}\right)\breve{x}^{*}_{t} - B_{t}R^{-1}_{t}B^{\intercal}_{t}\lambda^{*}_{t}\right]dt + D_{t} dW_{t},\qquad \qquad \qquad \qquad \qquad \,\breve{x}^{*}_0 = \xi - \E[\xi],\\
            d\lambda^{*}_{t} = -\left[\left(A_{t} - B_{t}R^{-1}_{t}S^{\intercal}_{t}\right)^{\intercal}\lambda^{*}_{t} + \left(Q_{t} - S_{t}R^{-1}_{t}S^{\intercal}_{t}\right)\breve{x}^{*}_{t}\right]dt + \beta^{*}_{t}dW_{t} + \beta_t^{0,*}dW^{0}_{t}, \quad \lambda^{*}_{T} = Q_{T}\breve{x}^{*}_{T}.
        \end{cases}
    \end{equation}
\end{corollary}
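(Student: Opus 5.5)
The plan is to assemble the corollary from \Cref{KeyTheo}, \Cref{UnifConvChara} and \Cref{OptimPrincMFT}; no new variational computation is needed.

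\textbf{Existence and uniqueness.} Under \Cref{A2}, \Cref{UnifConvChara} gives uniform convexity of $\bar{J}$ and $\breve{J}$. Each is a continuous quadratic functional on a Hilbert space: $\bar{\mathcal{U}}$, and $\breve{\mathcal{U}}$, which is a closed subspace of $\mathcal{U}$ since conditional expectation is a continuous projection. Hence each has a unique minimizer $v^{*}$, respectively $\alpha^{*}$. By \Cref{KeyTheo}(a), $u^{*}:=v^{*}+\alpha^{*}$ is optimal for \hyperref[MTProb]{\color{black}\thnameref{MTProb}}. Uniform convexity of $J$, also from \Cref{UnifConvChara}, makes $u^{*}$ the unique optimal control.

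\textbf{Identifying the components.} By \Cref{KeyTheo}(b), $\bar{u}^{*}=\E[u^{*}_t|\mathcal{F}^0_t]$ and $\breve{u}^{*}=u^{*}-\bar{u}^{*}$ are the optimal controls of the auxiliary problems. By uniqueness they coincide with $v^{*}$ and $\alpha^{*}$. Applying \Cref{OptimPrincMFT}(i), condition \eqref{opt-control-prob1} gives $\bar{u}^{*}_t=-R^{-1}_t(\bar{S}^{\intercal}_t\bar{x}^{*}_t+B^{\intercal}_tp^{*}_t+\varpi_t)$. Substituting this into \eqref{BarFBSDE1}–\eqref{BarFBSDE2} yields exactly the coupled system \eqref{OptimBarHamil}. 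Likewise, \Cref{OptimPrincMFT}(ii) with \eqref{opt-control-prob2} gives $\breve{u}^{*}_t=-R^{-1}_t(S^{\intercal}_t\breve{x}^{*}_t+B^{\intercal}_t\lambda^{*}_t)$. Substituting into \eqref{BrevFBSDE1}–\eqref{BrevFBSDE2} gives \eqref{OptimBrevHamil}, with $(\lambda^{*},\beta^{*},\beta^{0,*})\in\mathcal{M}_{\mathcal{F}}$ as established in that proof via the Riccati ansatz. Summing the two expressions gives \eqref{OptimControl}, since $u^{*}=\bar{u}^{*}+\breve{u}^{*}$.

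\textbf{Main obstacle.} The delicate step is justifying that \eqref{OptimBarHamil} and \eqref{OptimBrevHamil} have adapted solutions, and that the solutions are the ones coming from the optimal pairs. Existence follows because the optimal pair, together with its adjoint BSDE solution, solves the system. Uniqueness follows from the decoupling argument already carried out in the proof of \Cref{OptimPrincMFT}. For \eqref{OptimBrevHamil}, the stochastic Riccati equation \eqref{Ricatti}, solvable by \cite{Tang2003}, gives $\lambda^{*}=\Pi\breve{x}^{*}$, $\beta^{*}=\Pi D$ and $\beta^{0,*}=\Psi^{0}\breve{x}^{*}$, together with $\E[\breve{x}^{*}_t|\mathcal{F}^0_t]=0$. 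For \eqref{OptimBarHamil}, the analogous Riccati equation with coefficients $(A+F,\bar{Q},\bar{S})$ plus a linear BSDE for the affine term gives the same conclusion. Alternatively, uniqueness follows because any solution of either Hamiltonian system satisfies the sufficient optimality condition, hence produces the unique minimizer.
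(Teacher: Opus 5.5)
Your proposal is correct and follows essentially the paper's route. Uniform convexity (\Cref{UnifConvChara}) yields a unique optimal $u^{*}$, split as $\bar{u}^{*}+\breve{u}^{*}$; conditions \eqref{opt-control-prob1} and \eqref{opt-control-prob2} of \Cref{OptimPrincMFT} are solved for each component, substituted into \eqref{BarFBSDE1}--\eqref{BarFBSDE2} and \eqref{BrevFBSDE1}--\eqref{BrevFBSDE2}, and summed. The differences are minor: the paper gets existence directly from the uniform convexity of $J$ rather than through the auxiliary minimizers and \Cref{KeyTheo}(a), and it omits your extra discussion of uniqueness for the two Hamiltonian systems.
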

\begin{proof}
     According to \Cref{UnifConvChara}, the cost functional $J(u)$ of \hyperref[MTProb]{\color{black}\thnameref{MTProb}} is uniformly convex. Consequently, there exists a unique $(x^{*},\,u^{*})\in\mathcal{C}_{\mathcal{F}}\left([0,T],\mathbb{R}^{n}\right)\times\mathcal{U}$ such that 
    \begin{equation}
        J\left(u^{*}\right) \leq J\left(u\right), \quad \forall u \in \mathcal{U}.
    \end{equation} 
   Since $u^*$ can be decomposed as $u^*=\bar{u}^{*}+\breve{u}^{*}$, and according to \Cref{OptimPrincMFT}, $\bar{u}^{*}\in \bar{\mathcal{U}}$ satisfies \eqref{opt-control-prob1} and $\breve{u}^{*}\in \breve{\mathcal{U}}$ satisfies \eqref{opt-control-prob2}, the optimal control $u^*_t$ is given by     
    \begin{equation}
        u^{*}_{t} = \bar{u}^{*}_{t} + \breve{u}^{*}_{t} = -R^{-1}_{t}\left[\bar{S}_{t}^{\intercal}\bar{x}^{*}_{t} + S_{t}^{\intercal}\breve{x}^{*}_{t} + B_{t}^{\intercal}\left(\lambda^{*}_{t} + p^{*}_{t}\right) + \varpi_{t}\right],\quad dt\otimes d\mathbb{P}-a.s.
    \end{equation}  Substituting $\bar{u}^{*}$ (respectively $\breve{u}^{*}$) in the FBSDEs given by  \eqref{BarFBSDE1}-\eqref{BarFBSDE2} (respectively \eqref{BrevFBSDE1}-\eqref{BrevFBSDE2}) yields \eqref{OptimBarHamil} (respectively \eqref{OptimBrevHamil}).
\end{proof}

    \begin{remark}(Consistency with Existing Results) We show that the results presented in \Cref{optimal-control-MFT}, obtained using the proposed decomposition method, are consistent with those reported in existing literature; specifically, \cite[Theorem 3.1]{Yong2013}, \cite[Proposition 2.4]{Graber2016} and \cite[Theorem 3.1]{Yang2022}, which address LQ McKean-Vlasov control problems with deterministic coefficients, driven by one or two Wiener processes, using an extended form of the stochastic maximum principle or variational methods. For comparison purposes, we first express the cost functional $J(u)$ in \eqref{MTCost} in a form analogous to that used in \citep{Yong2013,Graber2016,Yang2022}, given by 
        \begin{align}
            J\left(u\right) = \frac{1}{2}\E\bigg\{\int_{0}^{T}\Big[&x^{\intercal}_{t}Q_{t}x_{t} + \E[x_{t}|\mathcal{F}_{t}^0]^{\intercal}\left(\bar{Q}_{t} - Q_{t}\right)\E[x_{t}|\mathcal{F}_{t}^0] + u^{\intercal}_{t}R_{t}u_{t} + 2x^{\intercal}_{t}S_{t}u_{t} - 2\E[x_{t}|\mathcal{F}_{t}^0]^{\intercal}H^{\intercal}S_{t}\E[u_{t}|\mathcal{F}_{t}^0]\notag\\ 
            & + 2\bar{\zeta}^{\intercal}_{t}\E[x_{t}|\mathcal{F}_{t}^0] + 2\varpi_{t}^{\intercal}u_{t} \Big]dt + x^{\intercal}_{T}Q_{T}x_{T} + \E[x_{T}|\mathcal{F}_{T}^0]^{\intercal}\left(\bar{Q}_{T} - Q_{T}\right)\E[x_{T}|\mathcal{F}_{T}^0]\bigg\}.
        \end{align} 
  On the one hand, following the extended stochastic maximum principle approach in \citep{Yong2013,Graber2016,Yang2022}, the optimal pair $(x^\circ,u^{\circ})$ for \hyperref[MTProb]{\color{black}\thnameref{MTProb}} satisfies 
    \begin{equation}\label{stoch-max-1}
        \begin{split}
            &u^{\circ}_{t} = -R^{-1}_{t}\left[S_{t}^{\intercal}x^\circ_{t} - S_{t}^{\intercal}H\E[x^\circ_{t}|\mathcal{F}_{t}^0] + B_{t}^{\intercal}y_{t}^\circ + \varpi_{t}\right],\quad dt\otimes d\mathbb{P}-a.s,\\
            &dx^\circ_{t} = [A_{t}x^\circ_{t}+B_{t}u^{\circ}_{t}+ F_{t}\E[x^\circ_{t}|\mathcal{F}_{t}^0] + b_{t}]dt+D_{t} dW_{t} + D_{t}^{0} dW_{t}^{0} ,  \quad x^\circ_0 = \xi,
        \end{split}
    \end{equation} 
    where the triple $(y^\circ, z^{\circ}, z^{0,\circ})$ satisfies
    \begin{equation}\label{stoch-max-2}
        \begin{split}
            dy_{t}^{\circ} = &-\left[A^{\intercal}_{t}y_{t}^{\circ} + F^{\intercal}_{t}\E[y_{t}^{\circ}|\mathcal{F}_{t}^0] +  Q_{t}x^{\circ}_{t} +  (\bar{Q}_{t} - Q_{t})\E[x^{\circ}_{t}|\mathcal{F}_{t}^0] + S_{t}u^{\circ}_{t} - H^{\intercal}S_{t}\E[u^{\circ}_{t}|\mathcal{F}_{t}^0] + \bar{\zeta}_{t}\right]dt\\ 
             &+ z_{t}^{\circ} dW_{t} +z_{t}^{0,\circ}dW^{0}_{t}, \quad y^{\circ}_{T} = Q_{T}x^\circ_{T} + (\bar{Q}_{T} - Q_{T})\E[x^{\circ}_{T}|\mathcal{F}_{T}^0].
        \end{split}
    \end{equation}
    On the other hand, using the proposed decomposition method, from \Cref{OptimPrincMFT} and \Cref{optimal-control-MFT}, the optimal pair $(x^{*},u^{*})=(\bar{x}^{*}+\breve{x}^{*},\bar{u}^{*}+\breve{u}^{*})$ for \hyperref[MTProb]{\color{black}\thnameref{MTProb}} satisfies
        \begin{align}\label{decompose-1}
            &u^{*}_{t} = -R^{-1}_{t}\left[S_{t}^{\intercal}x^{*}_{t} - S_{t}^{\intercal}H\bar{x}^{*}_{t} + B_{t}^{\intercal}\left(\lambda^{*}_{t} + p^{*}_{t}\right) + \varpi_{t}\right],\quad dt\otimes d\mathbb{P}-a.s,\allowdisplaybreaks\\
            &dx^{*}_{t} = [A_{t}x^{*}_{t} + B_{t}u^{*}_{t}+ F_{t}\bar{x}^{*}_{t} + b_{t}]dt + D_{t} dW_{t} + D_{t}^{0} dW_{t}^{0} ,  \quad x^{*}_{0} = \xi,\nonumber
        \end{align}
    where the triple $(\lambda^*+p^*, \beta^*, \beta^{0,*} + q^{*})$ satisfies         \begin{align}\label{decompose-2}
            d\left(\lambda^{*}_{t} + p^{*}_{t}\right) =& -\left[A^{\intercal}_{t}\left(\lambda^{*}_{t} + p^{*}_{t}\right) + F^{\intercal}_{t}p^{*}_{t} +  Q_{t}x^{*}_{t} +  (\bar{Q}_{t} - Q_{t})\bar{x}^{*}_{t} + S_{t}u^{*}_{t} - H^{\intercal}S_{t}\bar{u}^*_t + \bar{\zeta}_{t}\right]dt\nonumber\allowdisplaybreaks\\ 
            & + \beta^{*}_{t}dW_{t} + \left(\beta_t^{0,*} + q^{*}_{t}\right)dW^{0}_{t}, \quad \lambda^{*}_{T} + p^{*}_{T} = Q_{T}{x}^{*}_{T} + (\bar{Q}_{T} - Q_{T})\bar{x}^{*}_{T},
        \end{align}
  and where $\bar{x}_t^* =\mathbb{E}[x^*_t|\mathcal{F}^0_t]$, since $\mathbb{E}[x^*_t|\mathcal{F}^0_t]=\mathbb{E}[\breve{x}^*_t+\bar{x}^*_t|\mathcal{F}^0_t]=\bar{x}^*_t$ by the definitions of {\color{black}\thnameref{DecouplProb1}} and {\color{black}\thnameref{DecouplProb2}}. Similarly, $\bar{u}^*_t=\E[u^{*}_{t}|\mathcal{F}_{t}^0]$ and $p^*_t=\E[\lambda^*_t+p^*_t|\mathcal{F}_{t}^0]$. We observe that \eqref{stoch-max-1}–\eqref{stoch-max-2} coincide with \eqref{decompose-1}–\eqref{decompose-2}. By \Cref{thm:wellposed-gen-FBSDE} and \Cref{wellposed_our_FBSDE} (see \ref{AppendixC}), these FBSDEs admit a unique solution. Consequently, $(x^\circ_t,y^\circ_t,z^\circ_t,z^{0,\circ}_t)=(x^*_t,\lambda^*_t+p^*_t,\beta^*_t, \beta^{0,*}_t + q^{*}_t),\, dt\otimes d\mathbb{P}-a.s$. Hence, $({x}^*_t,{u}^*_t)=({x}^\circ_t,{u}^\circ_t), \, dt\otimes d\mathbb{P}-a.s$.  
\end{remark}   
 For completeness, the following theorem presents the obtained optimal control for \hyperref[MTProb]{\color{black}\thnameref{MTProb}} in feedback form.
    
    \begin{theorem}(Feedback Representation)\label{thm:feedback-rep} Suppose that \Cref{A2} holds. Then, the optimal control for \hyperref[MTProb]{\color{black}\thnameref{MTProb}}, given by \eqref{OptimControl}-\eqref{OptimBrevHamil}, admits a feedback form as    \begin{equation}\label{MTFedBackOptim}
                u^{*}_{t} = -R^{-1}_{t}\left[(\Pi_{t}B_{t}+S_{t})^{\intercal}(x_t^*-\bar{x}_t^*)  + (L_{t}B_{t} + \bar{S}_{t})^{\intercal}\bar{x}^{*}_{t}  + B^{\intercal}_{t}\ell_{t} + \varpi_{t}\right],
            \end{equation}   
    where ${x}^{*}_t$ and $\bar{x}^{*}_{t}$, respectively, satisfy
     \begin{align}
            dx^{*}_{t} =& \big[\left(A_{t} - B_{t}R^{-1}_{t}(\Pi_{t}B_{t}+S_{t})^{\intercal}\right)x_t^* + \left(F_{t} - B_{t}R^{-1}_{t}((L_{t} - \Pi_{t})B_{t} - H^{\intercal}S_{t})^{\intercal}\right)\bar{x}^{*}_{t}\nonumber\\
            &\,\,\,  - B_{t}R^{-1}_{t}\left(B^{\intercal}_{t}\ell_{t} + \varpi_{t}\right)  + b_{t}\big]dt+ D_{t} dW_{t} + D_{t}^{0} dW_{t}^0, \,\,\qquad \qquad \qquad \qquad \qquad \qquad \qquad
            \; \, x^{*}_{0} = \xi,\\
           d\bar{x}^{*}_{t} =& \left[\left(A_{t} + F_{t} - B_{t}R^{-1}_{t}(L_{t}B_{t} + \bar{S}_{t})^{\intercal}\right)\bar{x}^{*}_{t} - B_{t}R^{-1}_{t}\left(B^{\intercal}_{t}\ell_{t} + \varpi_{t}\right)  + b_{t}\right]dt+ D_{t}^{0} dW_{t}^0,  \qquad \quad \,
            \bar{x}^{*}_{0} =\E[\xi].\allowdisplaybreaks
    \end{align}
            Furthermore,  $\left(L,\bar{\Psi}^{0}\right)\in L_{\mathcal{F}^{0}}^{\infty}\left( [0,T],\mathbb{S}^{n}\right)\times L_{\mathcal{F}^{0}}^{2}\left([0,T],\mathbb{R}^{n}\right)$ and $\left(\Pi,\Psi^{0}\right)\in L_{\mathcal{F}^{0}}^{\infty}\left( [0,T],\mathbb{S}^{n}\right)\times L_{\mathcal{F}^{0}}^{2}\left([0,T],\mathbb{R}^{n}\right)$ are, respectively, the unique solutions of the stochastic Riccati equations 
        \begin{align}      
            d\Pi_{t} =& -\left[\Pi_{t}\left(A_{t} - B_{t}R^{-1}_{t}S^{\intercal}_{t}\right) + \left(A_{t} - B_{t}R^{-1}_{t}S^{\intercal}_{t}\right)^{\intercal}\Pi_{t} - \Pi_{t}B_{t}R_{t}^{-1}B_{t}^{\intercal}\Pi_{t} + Q_{t} - S_{t}R_{t}^{-1}S_{t}^{\intercal}\right]dt\notag\\
             &+\Psi_{t}^{0}dW_{t}^{0},\quad \Pi_{T} = Q_{T},\label{SRE}\allowdisplaybreaks\\   
            dL_{t} =& -\left[L_{t}\left(A_{t} + F_{t} - B_{t}R_{t}^{-1}\bar{S}_{t}^{\intercal}\right) + \left(A_{t} + F_{t} - B_{t}R_{t}^{-1}\bar{S}_{t}^{\intercal}\right)^{\intercal}L_{t} - L_{t}B_{t}R_{t}^{-1}B_{t}^{\intercal}L_{t}+\bar{Q}_{t} - \bar{S}_{t}R_{t}^{-1}\bar{S}_{t}^{\intercal}\right]dt\notag\\
            & + \bar{\Psi}_{t}^{0}dW_{t}^{0}, \quad  L_{T} = \bar{Q}_{T},  \label{SRE1}
    \end{align}
            and $\left(\ell,\psi^{0}\right)\in \mathcal{C}_{\mathcal{F}^{0}}\left([0,T],\mathbb{R}^{n}\right)\times L_{\mathcal{F}^{0}}^{2}\left( [0,T],\mathbb{R}^{n}\right)$ is the unique solution of the BSDE 
        \begin{align}\label{offset}
                d\ell_{t} &= -\left[\left(A_{t} + F_{t} - B_{t}R_{t}^{-1}(L_{t}B_{t} + \bar{S}_{t})^{\intercal}\right)^{\intercal}\ell_{t} - \left(L_{t}B_{t} + \bar{S}_{t}\right)R_{t}^{-1}\varpi_{t} + \bar{\zeta}_{t} + L_{t}b_{t} + \bar{\Psi}_{t}^{0}D_{t}^{0}\right]dt\notag\\
             &\quad + \psi_{t}^{0}dW_{t}^{0},\quad \ell_{T}=0.
        \end{align}
    \end{theorem}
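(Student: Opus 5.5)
The plan is to decouple the two FBSDE systems of \Cref{optimal-control-MFT} separately through affine ansätze, and then add the resulting controls. For the second auxiliary system \eqref{OptimBrevHamil}, the proof of \Cref{OptimPrincMFT} already supplies the pieces. With $(\Pi,\Psi^0)$ the unique solution of \eqref{SRE} from \cite{Tang2003}, the ansatz $\lambda^*_t=\Pi_t\breve{x}^*_t$ matches the backward equation, with $\beta^*_t=\Pi_tD_t$ and $\beta^{0,*}_t=\Psi^0_t\breve{x}^*_t$. This gives $\breve{u}^*_t=-R_t^{-1}(\Pi_tB_t+S_t)^{\intercal}\breve{x}^*_t$. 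Uniqueness for \eqref{OptimBrevHamil}, which holds by uniform convexity in \Cref{UnifConvChara} or by \ref{AppendixC}, identifies this candidate with the actual solution. Writing $\breve{x}^*=x^*-\bar{x}^*$ then yields the first term of \eqref{MTFedBackOptim}.

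For the first system \eqref{OptimBarHamil}, whose noise is only $W^0$ and which has inhomogeneous terms, I would use the ansatz $p^*_t=L_t\bar{x}^*_t+\ell_t$. Here $(L,\bar\Psi^0)$ solves a stochastic Riccati BSDE and $(\ell,\psi^0)$ solves a linear BSDE, both driven by $W^0$. First I apply Itô's formula to $L_t\bar{x}^*_t$. Because $L$ and $\bar{x}^*$ are both driven by $dW^0$, this produces a cross-variation term $\bar\Psi^0_tD^0_t\,dt$. Next I substitute $\bar{u}^*_t=-R_t^{-1}(\bar{S}_t^{\intercal}\bar{x}^*_t+B_t^{\intercal}(L_t\bar{x}^*_t+\ell_t)+\varpi_t)$ into the drift. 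Then I match the resulting $dt$ coefficient against the driver of $p^*$ in \eqref{OptimBarHamil}. Matching the terms linear in $\bar{x}^*$ gives exactly \eqref{SRE1}. The remaining terms give \eqref{offset}; these include $L_tb_t$, $-L_tB_tR_t^{-1}\varpi_t$, $-\bar{S}_tR_t^{-1}\varpi_t$, $\bar\zeta_t$ and the cross term $\bar\Psi^0_tD^0_t$. The martingale parts give $q^*_t=L_tD^0_t+\bar\Psi^0_t\bar{x}^*_t+\psi^0_t$, and the terminal conditions match because $L_T=\bar{Q}_T$ and $\ell_T=0$. Substituting the ansatz into the forward equation gives the stated dynamics of $\bar{x}^*$.

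For well-posedness, \eqref{SRE1} is a stochastic Riccati equation of the same type as \eqref{SRE}, with data $(A+F,B,\bar{Q},\bar{S},R)$. I would check \Cref{A2}-type conditions for this data. The condition $R\geq\delta I$ is immediate. For the state weight, $\bar{Q}-\bar{S}R^{-1}\bar{S}^{\intercal}=(I-H)^{\intercal}(Q-SR^{-1}S^{\intercal})(I-H)\geq0$, and likewise $\bar{Q}_T=(I-H)^{\intercal}Q_T(I-H)\geq0$. So \cite{Tang2003} gives a unique solution $(L,\bar\Psi^0)$ with $L$ bounded and nonnegative. Equation \eqref{offset} is then a linear BSDE with bounded coefficient matrix. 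Its inhomogeneity is in $L^2_{\mathcal{F}^0}$, except possibly for $\bar\Psi^0D^0$, which I expect to be the main technical obstacle. Here $\bar\Psi^0$ is only square integrable and $D^0\in L^2$, so the product need not lie in $L^2$. I would handle this either by invoking the BMO-martingale property of $\int\bar\Psi^0dW^0$ for bounded Riccati solutions, which gives enough integrability for an $L^1$/$L^p$ BSDE theory, or by appealing to the well-posedness results of \ref{AppendixC}.

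Finally, the candidate $(\bar{x}^*,L\bar{x}^*+\ell,q^*)$ solves \eqref{OptimBarHamil}, which has a unique solution, so it is the solution. Adding $\bar{u}^*$ and $\breve{u}^*$ as in \eqref{OptimControl} gives \eqref{MTFedBackOptim}. Substituting this into \eqref{MTIstate} and regrouping the coefficient of $\bar{x}^*$ gives the stated $x^*$ dynamics. That coefficient is
\[
F_t+B_tR_t^{-1}(\Pi_tB_t+S_t)^{\intercal}-B_tR_t^{-1}(L_tB_t+\bar{S}_t)^{\intercal}=F_t-B_tR_t^{-1}\big((L_t-\Pi_t)B_t-H^{\intercal}S_t\big)^{\intercal},
\]
where I use $\bar{S}_t=S_t-H^{\intercal}S_t$.
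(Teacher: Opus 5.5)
Your proposal is correct and follows the paper's proof. Both arguments use the affine ansatzes $\lambda^*_t=\Pi_t\breve{x}^*_t$ and $p^*_t=L_t\bar{x}^*_t+\ell_t$ with It\^o's formula and coefficient matching (including the cross-variation term $\bar\Psi^0_tD^0_t$), which yields $\beta^*_t=\Pi_tD_t$, $\beta^{0,*}_t=\Psi^0_t\breve{x}^*_t$ and $q^*_t=\bar\Psi^0_t\bar{x}^*_t+L_tD^0_t+\psi^0_t$, and both invoke \cite{Tang2003} for the Riccati equations. Your extra steps are points the paper passes over rather than handles differently: checking \Cref{A2} for the data of the $L$-equation via $\bar Q_t-\bar S_tR_t^{-1}\bar S_t^{\intercal}=(I_n-H)^{\intercal}(Q_t-S_tR_t^{-1}S_t^{\intercal})(I_n-H)\geq 0$, using uniqueness to identify the ansatz with the actual FBSDE solution, and flagging the integrability of $\bar\Psi^0D^0$ in the $\ell$-equation.
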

    \begin{proof}
        Adopting the ansatzes $p^{*}_{t} = L_{t}\bar{x}^{*}_{t} + \ell_{t}$ and $\lambda^{*}_{t} = \Pi_{t}\breve{x}^{*}_{t}$, and applying Itô's lemma to these expressions, we obtain the BSDEs that they satisfy.  We then substitute these expression in \eqref{OptimBarHamil} and \eqref{OptimBrevHamil} and match them with the corresponding SDEs obtained above, which yields  \eqref{SRE}-\eqref{offset} and 
        \begin{align*}\label{FeedForm}
            &q^{*}_{t} = \bar{\Psi}_{t}^{0}\bar{x}^{*}_{t} + L_{t}D_{t}^{0} + \psi_{t}^{0},\\
                &\beta^{*}_{t} = \Pi_{t}D_{t},\\  &\beta_t^{0,*} = \Psi^{0}_{t}\breve{x}^{*}_{t},
        \end{align*}
        where we can easily verify that $\mathbb{E}[\beta_t^{0,*}|\mathcal{F}^0_t]=0$ for every $t\in[0,T]$, and hence $(\lambda^*, \beta^*,\beta^{0,*} )\in \mathcal{M}_{\mathcal{F}}\left([0,T],\mathbb{R}^{n}\right)$. According to \cite[Theorem 5.3, pp.~64]{Tang2003}, under \Cref{A2} and the imposed conditions on the system coefficients, the stochastic Riccati equations \eqref{SRE} and \eqref{SRE1} each admit a unique solution, $\left(\Pi,\Psi^{0}\right),(L,\bar{\Psi}^{0})\in L_{\mathcal{F}^{0}}^{\infty}\left( [0,T],\mathbb{S}^{n}\right)\times L_{\mathcal{F}^{0}}^{2}\left([0,T],\mathbb{R}^{n}\right)$ with $\Pi_{t},L_{t}\geq 0\,\,dt\otimes d\mathbb{P}-a.s.$ Subsequently, the BSDE \eqref{offset} also admits a unique solution $\left(\ell,\psi^{0}\right)\in \mathcal{C}_{\mathcal{F}^{0}}\left([0,T],\mathbb{R}^{n}\right)\times L_{\mathcal{F}^{0}}^{2}\left( [0,T],\mathbb{R}^{n}\right)$. Finally, substituting these ansatzes in   \eqref{OptimControl} yields the feedback representation given by \eqref{MTFedBackOptim}.
    \end{proof}

\section{Conclusion}\label{sec:conclusion}
In this paper, we have introduced a decomposition approach for studying LQ McKean-Vlasov optimal control problems involving conditional expectations of the state and random coefficients. Through this approach, the considered McKean-Vlasov control problem is shown to be equivalent to two stochastic optimal control problems that can be addressed using standard methods. Furthermore, we applied a variational method to the auxiliary problems and represented the optimal solution of the original McKean-Vlasov problem as the sum of the optimal solutions to these two problems. This, in turn, yields an optimal control in a state feedback form characterized by two decoupled standard stochastic Riccati differential equations. Finally, we showed that standard dynamic programming can also be applied to address the auxiliary problems.

\renewcommand{\bibname}{References}  
\bibliographystyle{elsarticle-num-names} 
\bibliography{Biblio}

\appendix 

\section{Proof of \Cref{lemma:cost-term-decomp}}\label{DecomProof}
We prove properties (iv) and (v) in the following sections. 
\subsection{Proof of Property (iv)}
We aim to show that $\E[\left(x_{t} - H\bar{x}_{t}\right)^{\intercal}S_{t}u_{t}] = \E[\breve{x}_{t}^{\intercal}S_{t}\breve{u}_{t}] + \E[\bar{x}^{\intercal}_{t}\bar{S}_{t}\bar{u}_{t}]$. To this end, we first express $\E[\breve{x}_{t}^{\intercal}S_{t}\breve{u}_{t}]$ as the sum of three terms as in   \begin{equation}\label{A:Eq1}
            \begin{split}
                \E[\breve{x}_{t}^{\intercal}S_{t}\breve{u}_{t}] &= \E\left[\left(x_{t} - \bar{x}_{t}\right)^{\intercal}S_{t}\left(u_{t} - \bar{u}_{t}\right)\right]\\
                &= \E\left[\left(x_{t} - H\bar{x}_{t} +  H\bar{x}_{t} - \bar{x}_{t}\right)^{\intercal}S_{t}\left(u_{t} - \bar{u}_{t}\right)\right]\\
                &= \E\left[\left(x_{t} - H\bar{x}_{t}\right)^{\intercal}S_{t}u_{t}\right] - \underbrace{\E\left[\left(x_{t} - H\bar{x}_{t}\right)^{\intercal}S_{t} \bar{u}_{t}\right]}_{D_1} - \underbrace{\E\left[\left(\bar{x}_{t} - H\bar{x}_{t}\right)^{\intercal}S_{t}\left(u_{t} - \bar{u}_{t}\right)\right]}_{D2},
            \end{split}
        \end{equation}
        where 
        \begin{equation}
            \begin{split}
                D_1=\E\left[\left(x_{t} - H\bar{x}_{t}\right)^{\intercal}S_{t} \bar{u}_{t}\right] &= \E\left(\E\left[\left(x_{t} - H\bar{x}_{t}\right)^{\intercal}S_{t} \bar{u}_{t}\Big |\mathcal{F}^{0}_{t}\right]\right)\\
                &= \E\left(\E\left[\left(x_{t} - H\bar{x}_{t}\right)^{\intercal}\Big |\mathcal{F}^{0}_{t}\right] S_{t}\bar{u}_{t}\right)\\
                &= \E\left[\left(\bar{x}_{t} - H\bar{x}_{t}\right)^{\intercal} S_{t}\bar{u}_{t}\right]\\
                &= \E\left[\bar{x}^{\intercal}_{t}\left(I_{n} - H\right)^{\intercal} S_{t}\bar{u}_{t}\right]\\
                &= \E\left[\bar{x}^{\intercal}_{t}\bar{S}_{t}\bar{u}_{t}\right],\\
            \end{split}
        \end{equation}
        and         \begin{equation}\label{A:Eq2}
            \begin{split}
            D_2=\E\left[\left(\bar{x}_{t} - H\bar{x}_{t}\right)^{\intercal}S_{t}\left(u_{t} - \bar{u}_{t}\right)\right] &= \E\left(\E\left[\left(\bar{x}_{t} - H\bar{x}_{t}\right)^{\intercal}S_{t}\left(u_{t} - \bar{u}_{t}\right)\Big | \mathcal{F}^{0}_{t}\right]\right)\\
            &= \E\left(\left(\bar{x}_{t} - H\bar{x}_{t}\right)^{\intercal}S_{t}\E\left[u_{t} - \bar{u}_{t}\Big | \mathcal{F}^{0}_{t}\right]\right)\\
            &= 0.
            \end{split}
        \end{equation} 
        Hence, \eqref{A:Eq1} is expressed as
        \begin{equation}
            \E[\breve{x}_{t}^{\intercal}S_{t}\breve{u}_{t}] = \E\left[\left(x_{t} - H\bar{x}_{t}\right)^{\intercal}S_{t}u_{t}\right] - \E\left[\bar{x}^{\intercal}_{t}\bar{S}_{t}\bar{u}_{t}\right],
        \end{equation} which implies the property (iv).

\subsection{Proof of Property (v)}
Our goal is to establish that $\E[\left(x_{t} - H\bar{x}_{t}\right)^{\intercal}Q_{t}\left(x_{t} - H\bar{x}_{t}\right)] = \E\left[\breve{x}^{\intercal}_{t}Q_{t}\breve{x}_{t}\right] + \E\left[\bar{x}_{t}^{\intercal}\bar{Q}_{t}\bar{x}_{t} \right]$. Similar to the proof of property (iv), we first decompose $\E[\breve{x}^{\intercal}_{t}Q_{t}\breve{x}_{t}]$ into three terms, given by
    \begin{align}\label{A:Eq3}
            \E[\breve{x}^{\intercal}_{t}Q_{t}\breve{x}_{t}] &= \E\left[\left(x_{t} - \bar{x}_{t}\right)^{\intercal}Q_{t}\left(x_{t} - \bar{x}_{t}\right)\right]\nonumber\allowdisplaybreaks \\
            &= \E\left[\left(x_{t} - H\bar{x}_{t} + H\bar{x}_{t} - \bar{x}_{t}\right)^{\intercal}Q_{t}\left(x_{t} - H\bar{x}_{t} + H\bar{x}_{t} - \bar{x}_{t}\right)\right]\nonumber\allowdisplaybreaks \\
            &= \E\left[\left(x_{t} - H\bar{x}_{t}\right)^{\intercal}Q_{t}\left(x_{t} - H\bar{x}_{t}\right)\right] + \underbrace{\E\left[\left(\bar{x}_{t} - H\bar{x}_{t}\right)^{\intercal}Q_{t}\left(\bar{x}_{t} - H\bar{x}_{t}\right)\right]}_{D^\prime_1} - 2 \underbrace{\E\left[\left(x_{t} - H\bar{x}_{t}\right)^{\intercal}Q_{t}\left(\bar{x}_{t} - H\bar{x}_{t}\right)\right]}_{D^\prime_2},
        \end{align} 
        where
            \begin{align}
               D^\prime_1= \E\left[\left(\bar{x}_{t} - H\bar{x}_{t}\right)^{\intercal}Q_{t}\left(\bar{x}_{t} - H\bar{x}_{t}\right)\right] &= \E\left[\bar{x}^{\intercal}_{t}\left(I_{d} - H\right)^{\intercal}Q_{t}\left(I_{d} - H\right)\bar{x}_{t}\right]\nonumber\allowdisplaybreaks\\
            & = \E\left[\bar{x}^{\intercal}_{t}\bar{Q}_{t}\bar{x}_{t}\right],
            \end{align}
        and
        \begin{align}\label{A:Eq4}
              D^\prime_2=  \E\left[\left(x_{t} - H\bar{x}_{t}\right)^{\intercal}Q_{t}\left(\bar{x}_{t} - H\bar{x}_{t}\right)\right] &= \E\left(\E\left[\left(x_{t} - H\bar{x}_{t}\right)^{\intercal}Q_{t}\left(\bar{x}_{t} - H\bar{x}_{t}\right) \big| \mathcal{F}^{0}_{t}\right]\right)\nonumber\allowdisplaybreaks \\
                &= \E\left(\E\left[\left(x_{t} - H\bar{x}_{t}\right)^{\intercal} \big| \mathcal{F}^{0}_{t}\right]Q_{t}\left(\bar{x}_{t} - H\bar{x}_{t}\right)\right)\nonumber\allowdisplaybreaks \\
                &= \E\left[\left(\bar{x}_{t} - H\bar{x}_{t}\right)^{\intercal}Q_{t}\left(\bar{x}_{t} - H\bar{x}_{t}\right)\right].
            \end{align} 
        Subsequently, \eqref{A:Eq3} is expressed as 
        \begin{equation}
            \E[\breve{x}^{\intercal}_{t}Q_{t}\breve{x}_{t}] = \E\left[\left(x_{t} - H\bar{x}_{t}\right)^{\intercal}Q_{t}\left(x_{t} - H\bar{x}_{t}\right)\right] - \E\left[\bar{x}^{\intercal}_{t}\bar{Q}_{t}\bar{x}_{t}\right].
        \end{equation}
        Hence, property (v) holds.
        
\section{Solution of \color{black}\thnameref{DecouplProb2} via Dynamic Programming }\label{AppendixB}
In this section, we first demonstrate that the principle of optimality holds for \color{black}\thnameref{DecouplProb2}, and then use a standard stochastic Hamilton-Jacobi-Bellman (HJB) equation to derive the optimal control.

Specifically, for any fixed initial time $t\in[0,T]$ and initial state $\breve{\xi}\in\left\{\breve{\xi} \in L^{2}_{\mathcal{F}_t}\left(\Omega,\mathbb{R}^{n}\right): \E\big[\breve{\xi}|\mathcal{F}^{0}_{t}\big]=0\right\}$, we aim to find the control process $\alpha^\ast \in \breve{\mathcal{U}}_{t}:= \Big\{ \alpha \in L_{\mathcal{F}}^{2}\left( [t,T],\mathbb{R}^{d}\right): \E\left[\alpha_{\tau} \big| \mathcal{F}^{0}_{\tau}\right]=0,\,d\tau\otimes d\mathbb{P}-a.s. \Big\}$ that minimizes the cost functional given by
        \begin{equation}\label{Append:BrevCost}
              \breve{J}_{t,\breve{\xi}}\left(\alpha\right):=\frac{1}{2}\E\left[\int_{t}^{T}\left[{z_{\tau}}^{\intercal}Q_{\tau}z_{\tau}+2{z_{\tau}}^{\intercal}S_{\tau}\alpha_{\tau} + {\alpha_{\tau}}^{\intercal}R_{\tau}\alpha_{\tau}\right]d\tau + {z_{T}}^{\intercal}Q_{T}z_{T}\bigg| \mathcal{F}_{t}\right]
          \end{equation} where the state process $z \in \mathcal{C}_{\mathcal{F}}\left([t,T],\mathbb{R}^{n}\right)$ satisfies
            \begin{equation}\label{Append:BrevState}
                    dz_{\tau} = \left[A_{\tau}z_{\tau} + B_{\tau}\alpha_{\tau}\right]d\tau + D_{\tau} dW_{\tau}, \quad
                    z_{t} = \breve{\xi}.
               \end{equation}

          The associated value function for this problem is defined by
          \begin{equation}\label{Append:ValueFunc}
            V(t,\breve{\xi}):=\inf_{\alpha\in \breve{\mathcal{U}}_{t}}\breve{J}_{t,\breve{\xi}}\left(\alpha\right),\quad \mathbb{P}-a.s.,\quad \forall (t, \breve{\xi})\in[0,T)\times\left\{\breve{\xi} \in L^{2}_{\mathcal{F}_t}\left(\Omega,\mathbb{R}^{n}\right): \E\big[\breve{\xi}|\mathcal{F}^{0}_{t}\big]=0\right\}.
                  \end{equation}
          
\subsection{Principle of Optimality}\label{Append:DP} 
We begin with the following result. 
        \begin{theorem}
            For every $t\in[0,T),\,\breve{\xi}\in\left\{\breve{\xi} \in L^{2}_{\mathcal{F}_t}\left(\Omega,\mathbb{R}^{n}\right): \E\big[\breve{\xi}|\mathcal{F}^{0}_{t}\big]=0\right\}$ and   $h\in(t,T)$, the value function $V(.,.)$ associated with the problem defined by \eqref{Append:BrevCost}-\eqref{Append:BrevState} satisfies 
          \begin{equation}\label{Append:BelmanPrinc}
              V(t,\breve{\xi}) = \inf_{\alpha\in \breve{\mathcal{U}}_{t}}\E\left\{\frac{1}{2}\int_{t}^{h}\left[{z_{\tau}}^{\intercal}Q_{\tau}z_{\tau}+2{z_{\tau}}^{\intercal}S_{\tau}\alpha_{\tau} + {\alpha_{\tau}}^{\intercal}R_{\tau}\alpha_{\tau}\right]d\tau + V(h,z_{h})\bigg| \mathcal{F}_{t}\right\},\quad dt\otimes d\mathbb{P}-a.s.
          \end{equation} 
        \end{theorem}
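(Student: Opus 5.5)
We prove the dynamic programming principle \eqref{Append:BelmanPrinc} by establishing the two inequalities separately. Throughout, we denote the right-hand side of \eqref{Append:BelmanPrinc} by $\bar V(t,\breve{\xi})$. Two structural facts are used repeatedly. First, the set $\breve{\mathcal{U}}_t$ is stable under concatenation at time $h$: if $\alpha\in\breve{\mathcal{U}}_t$ and $\alpha'\in\breve{\mathcal{U}}_h$, then $\alpha\mathbb{1}_{[t,h)}+\alpha'\mathbb{1}_{[h,T]}\in\breve{\mathcal{U}}_t$, because the constraint $\E[\alpha_\tau|\mathcal{F}^0_\tau]=0$ is pointwise in $\tau$. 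Second, the state constraint propagates: for $\alpha\in\breve{\mathcal{U}}_t$ and $\E[\breve{\xi}|\mathcal{F}^0_t]=0$, we have $\E[z_h|\mathcal{F}^0_h]=0$. This holds because, as in the well-posedness discussion preceding \Cref{StateSplit}, the conditional mean $\E[z_\tau|\mathcal{F}^0_\tau]$ satisfies the linear random ODE $d m_\tau=A_\tau m_\tau d\tau$ with $m_t=0$. Here we use that $A,B$ are $\mathcal{F}^0$-adapted and that $\E[\int D\,dW|\mathcal{F}^0_\tau]=0$ by independence of $W$ and $W^0$. Consequently $(h,z_h)$ is an admissible initial pair, and $V(h,z_h)$ is well defined.

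\emph{Step 1 ($V\ge \bar V$).} Fix $\alpha\in\breve{\mathcal{U}}_t$. By the flow property of \eqref{Append:BrevState}, the state on $[h,T]$ is the solution started from $z_h$ at time $h$ under the restriction $\alpha|_{[h,T]}\in\breve{\mathcal{U}}_h$. The tower property $\E[\cdot|\mathcal{F}_t]=\E[\E[\cdot|\mathcal{F}_h]|\mathcal{F}_t]$ then gives
\[
\breve{J}_{t,\breve{\xi}}(\alpha)=\E\Big[\tfrac12\int_t^h(\cdots)d\tau+\breve{J}_{h,z_h}(\alpha|_{[h,T]})\Big|\mathcal{F}_t\Big]\ge \E\Big[\tfrac12\int_t^h(\cdots)d\tau+V(h,z_h)\Big|\mathcal{F}_t\Big].
\]
Taking the essential infimum over $\alpha$ yields $V(t,\breve{\xi})\ge\bar V(t,\breve{\xi})$.

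\emph{Step 2 ($V\le\bar V$).} This is the main obstacle, since it requires an $\varepsilon$-optimal control at time $h$ that is selected measurably in the random initial state $z_h$. Rather than invoking an abstract measurable selection, we exploit the LQ structure. Under \Cref{A2}, the Riccati equation \eqref{Ricatti} admits a unique solution $(\Pi,\Psi^0)$, and by the completion-of-squares and verification argument of Appendix B (equivalently, the argument of \Cref{OptimPrincMFT} restricted to $[h,T]$), the feedback $\alpha^{h}_\tau=-R_\tau^{-1}(S_\tau+\Pi_\tau B_\tau)^\intercal z^{h}_\tau$ started from $z_h$ is optimal. It belongs to $\breve{\mathcal{U}}_h$ by the same conditional-mean argument as in \eqref{OptimBrevHamilExpec:proof}--\eqref{sol1}, and it attains $V(h,z_h)=\tfrac12 z_h^\intercal\Pi_h z_h+c_h$, where $c_h$ is an $\mathcal{F}^0_h$-measurable offset generated by $D$. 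This feedback is defined simultaneously for all initial states, so no selection problem arises. If one prefers to avoid the explicit formula, the fallback is the standard argument: approximate $z_h$ by countably-valued random variables using the $L^2$-continuity of $\breve{J}$ in the initial state, and paste $\varepsilon$-optimal controls on a countable partition, checking that the pasting preserves the constraint $\E[\cdot|\mathcal{F}^0]=0$.

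\emph{Step 3 (conclusion).} Given any $\alpha\in\breve{\mathcal{U}}_t$, concatenate $\alpha$ on $[t,h)$ with the optimal feedback $\alpha^h$ on $[h,T]$. The concatenation lies in $\breve{\mathcal{U}}_t$ by the first structural fact, and its cost equals $\E[\tfrac12\int_t^h(\cdots)d\tau+V(h,z_h)|\mathcal{F}_t]$. Hence $V(t,\breve{\xi})$ is at most this quantity for every $\alpha$, so $V\le\bar V$. Combined with Step 1, this proves \eqref{Append:BelmanPrinc}.
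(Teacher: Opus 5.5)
Your proof is correct. Step 1 matches the paper's Step 1 (tower property and $\breve J_{h,z_h}(\alpha)\ge V(h,z_h)$). You also check that $\E[z_h|\mathcal{F}^0_h]=0$, so that $V(h,z_h)$ is defined at all, which the paper leaves implicit.

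The routes differ in the reverse inequality. The paper invokes ``the characterization of infimum'' to obtain a single $\alpha^\epsilon\in\breve{\mathcal{U}}_h$ with $\breve J_{h,z_h}(\alpha^\epsilon)<V(h,z_h)+\epsilon$ almost surely, and concatenates it after $\alpha$. This is the generic, model-free DPP argument, and it is what motivates the HJB derivation that follows. However, $V(h,z_h)$ is an essential infimum of $\mathcal{F}_h$-measurable random variables, and such an $\alpha^\epsilon$ is normally produced by pasting controls on events $A\in\mathcal{F}_h$. Pasting does not preserve the constraint. For $d=1$, take $\alpha^1_\tau\equiv W_h$ and $\alpha^2\equiv 0$ on $[h,T]$: both lie in $\breve{\mathcal{U}}_h$, yet $\mathbb{1}_{\{W_h>0\}}\alpha^1+\mathbb{1}_{\{W_h\le 0\}}\alpha^2=W_h^+$ has conditional mean $\E[W_h^+]>0$.

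Your route sidesteps this. The conditional completion of squares with the solution $\Pi$ of \eqref{Ricatti} shows that the infimum on $[h,T]$ is attained by one feedback, valid simultaneously for all admissible initial states and automatically respecting the constraint. So neither $\epsilon$ nor any selection is needed. The price is that your argument is LQ-specific and relies on solvability of the stochastic Riccati equation plus a verification identity. In other words, it uses the optimal control that the DPP was meant to help derive. This is not circular, since completion of squares does not use the DPP.

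For the same reason, your stated fallback would not work and should be dropped. Pasting $\epsilon$-optimal controls over a countable partition $\{z_h\in B_k\}$ uses $\mathcal{F}_h$-events that are not $\mathcal{F}^0_h$-measurable, so the pasted control generally violates $\E[\cdot|\mathcal{F}^0_\tau]=0$. In addition, countably-valued approximations of $z_h$ need not satisfy $\E[\cdot|\mathcal{F}^0_h]=0$, so they are not admissible initial states. Attainment by the explicit feedback is therefore the essential ingredient of your proof, not just a convenience.
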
 

\begin{proof}
We split the proof into two steps.

          \textbf{Step 1.} 
          From \eqref{Append:BrevCost} and by the law of iterated conditional expectation, for every $\alpha\in \breve{\mathcal{U}}_{t}$, $ t\in[0,T)$, $\breve{\xi}\in\left\{\breve{\xi} \in L^{2}_{\mathcal{F}_t}\left(\Omega,\mathbb{R}^{n}\right): \E\big[\breve{\xi}|\mathcal{F}^{0}_{t}\big]=0\right\}$ and $h\in(t,T)$, we have
          \begin{align}
              \breve{J}_{t,\breve{\xi}}\left(\alpha\right) &= \frac{1}{2}\E\bigg\{\int_{t}^{h}\left[{z_{\tau}}^{\intercal}Q_{\tau}z_{\tau}+2{z_{\tau}}^{\intercal}S_{\tau}\alpha_{\tau} + {\alpha_{\tau}}^{\intercal}R_{\tau}\alpha_{\tau}\right]d\tau\nonumber\allowdisplaybreaks\\
              &\qquad \quad  +  \E\bigg[\int_{h}^{T}\big[{z_{\tau}}^{\intercal}Q_{\tau}z_{\tau}+2{z_{\tau}}^{\intercal}S_{\tau}\alpha_{\tau} + {\alpha_{\tau}}^{\intercal}R_{\tau}\alpha_{\tau}\big]d\tau+ {z_{T}}^{\intercal}Q_{T}z_{T}\bigg| \mathcal{F}_{h}\bigg]\bigg| \mathcal{F}_{t}\bigg\}\nonumber\allowdisplaybreaks\\
              &= \E\left\{\frac{1}{2}\int_{t}^{h}\left[{z_{\tau}}^{\intercal}Q_{\tau}z_{\tau}+2{z_{\tau}}^{\intercal}S_{\tau}\alpha_{\tau} + {\alpha_{\tau}}^{\intercal}R_{\tau}\alpha_{\tau}\right]d\tau +  \breve{J}_{h,z_{h}}\left(\alpha\right)\bigg| \mathcal{F}_{t}\right\}\nonumber\allowdisplaybreaks\\
              &\geq \E\left\{\frac{1}{2}\int_{t}^{h}\left[{z_{\tau}}^{\intercal}Q_{\tau}z_{\tau}+2{z_{\tau}}^{\intercal}S_{\tau}\alpha_{\tau} + {\alpha_{\tau}}^{\intercal}R_{\tau}\alpha_{\tau}\right]d\tau +  \inf_{\alpha\in \breve{\mathcal{U}}_{h}}\breve{J}_{h,z_{h}}\left(\alpha\right)\bigg| \mathcal{F}_{t}\right\}\nonumber\allowdisplaybreaks\\
              &= \E\left\{\frac{1}{2}\int_{t}^{h}\left[{z_{\tau}}^{\intercal}Q_{\tau}z_{\tau}+2{z_{\tau}}^{\intercal}S_{\tau}\alpha_{\tau} + {\alpha_{\tau}}^{\intercal}R_{\tau}\alpha_{\tau}\right]d\tau +  V(h,z_{h})\bigg| \mathcal{F}_{t}\right\}\nonumber\allowdisplaybreaks\\
              &\geq \inf_{\alpha\in \breve{\mathcal{U}}_{t}}\E\left\{\frac{1}{2}\int_{t}^{h}\left[{z_{\tau}}^{\intercal}Q_{\tau}z_{\tau}+2{z_{\tau}}^{\intercal}S_{\tau}\alpha_{\tau} + {\alpha_{\tau}}^{\intercal}R_{\tau}\alpha_{\tau}\right]d\tau +  V(h,z_{h})\bigg| \mathcal{F}_{t}\right\},\quad \quad dt\otimes d\mathbb{P}-a.s.\label{Append:FirstIneq}
          \end{align} Since the inequality \eqref{Append:FirstIneq} holds for any arbitrary $\alpha\in \breve{\mathcal{U}}_{t}$, for every $ t\in[0,T)$ and $\breve{\xi}\in\left\{\breve{\xi} \in L^{2}_{\mathcal{F}_t}\left(\Omega,\mathbb{R}^{n}\right): \E\big[\breve{\xi}|\mathcal{F}^{0}_{t}\big]=0\right\}$ and $h\in(t,T)$, we have  
          \begin{align}\label{Append:Ineq1}
              V(t,\breve{\xi})&=\inf_{\alpha\in \breve{\mathcal{U}}_{t}}\breve{J}_{t,\breve{\xi}}\left(\alpha\right)\nonumber\allowdisplaybreaks \\
              &\geq \inf_{\alpha\in \breve{\mathcal{U}}_{t}}\E\left\{\frac{1}{2}\int_{t}^{h}\left[{z_{\tau}}^{\intercal}Q_{\tau}z_{\tau}+2{z_{\tau}}^{\intercal}S_{\tau}\alpha_{\tau} + {\alpha_{\tau}}^{\intercal}R_{\tau}\alpha_{\tau}\right]d\tau +  V(h,z_{h})\bigg| \mathcal{F}_{t}\right\},\quad \quad dt\otimes d\mathbb{P}-a.s.
          \end{align}
          \textbf{Step 2.} 
          Consider some arbitrary $t\in[0,T]$, $h\in(t,T)$ and $\alpha\in\breve{\mathcal{U}}_{t}$. By the definition of the value function and considering the characterization of infimum, for any $\epsilon>0$, there exists $\alpha^{\epsilon}\in\breve{\mathcal{U}}_{h}$ such that 
          \begin{equation}\label{Append:InfimCharac}
               \breve{J}_{h,z_{h}}\left(\alpha^{\epsilon}\right) = \frac{1}{2}\E\left[\int_{h}^{T}\left[{z^{\epsilon}_{\tau}}^{\intercal}Q_{\tau}z^{\epsilon}_{\tau}+2{z^{\epsilon}_{\tau}}^{\intercal}S_{\tau}\alpha^{\epsilon}_{\tau} + {\alpha^{\epsilon}_{\tau}}^{\intercal}R_{\tau}\alpha^{\epsilon}_{\tau}\right]d\tau + {z^{\epsilon}_{T}}^{\intercal}Q_{T}z^{\epsilon}_{T}\bigg| \mathcal{F}_{h}\right] < V(h,z_{h}) + \epsilon,\quad dh\otimes d\mathbb{P}-a.s.
          \end{equation} where $z^{\epsilon}\in \mathcal{C}_{\mathcal{F}}\left([h,T],\mathbb{R}^{n}\right)$ satisfies 
                \begin{equation}
                    dz^{\epsilon}_{\tau} = \left[A_{\tau}z^{\epsilon}_{\tau} + B_{\tau}\alpha^{\epsilon}_{\tau}\right]d\tau + D_{\tau} dW_{\tau}, \quad
                    z^{\epsilon}_{h} = z_{h},
               \end{equation} with $z\in \mathcal{C}_{\mathcal{F}}\left([t,T],\mathbb{R}^{n}\right)$ denoting the state trajectory of \eqref{Append:BrevState} corresponding to the control input $\alpha$.

               Consider now the process $\tilde{\alpha}^{\epsilon}$ that is defined as 
               \begin{equation}
                    \tilde{\alpha}^{\epsilon}_{\tau} = \mathbb{1}_{[t,h)}(\tau)\alpha_{\tau} + \mathbb{1}_{[h,T]}(\tau)\alpha^{\epsilon}_{\tau},\quad \forall \tau\in[t,T],\quad h\in(t,T),
               \end{equation} where $\mathbb{1}$ denotes the indicator function. Clearly, $\tilde{\alpha}^{\epsilon}\in L_{\mathcal{F}}^{2}\left( [t,T],\mathbb{R}^{d}\right)$ and $\E\left[\tilde{\alpha}^{\epsilon}_{\tau} \big| \mathcal{F}^{0}_{\tau}\right]=0$, for every $\tau\in[t,T]$, which implies that   $\tilde{\alpha}^{\epsilon}\in\breve{\mathcal{U}}_{t}$. Moreover, the state trajectory $\tilde{z}^{\epsilon}\in \mathcal{C}_{\mathcal{F}}\left([t,T],\mathbb{R}^{n}\right)$ of \eqref{Append:BrevState} corresponding to the control input $\tilde{\alpha}^{\epsilon}$, can be decomposed as 
               \begin{equation}
                   \tilde{z}^{\epsilon}_{\tau} = \mathbb{1}_{[t,h)}(\tau)z_{\tau} + \mathbb{1}_{[h,T]}(\tau)z^{\epsilon}_{\tau},\quad \forall \tau\in[t,T], \quad h\in(t,T).
               \end{equation} Therefore, from \eqref{Append:InfimCharac} and by the law of iterated conditional expectation, for any $\alpha\in\breve{\mathcal{U}}_{t}$,  $\epsilon>0$ and $h\in(t,T)$, we have
               \begin{align}
                   V(t,\breve{\xi}) &\leq \breve{J}_{t,\breve{\xi}}\left(\tilde{\alpha}^{\epsilon}\right)\nonumber\allowdisplaybreaks\\
                   &= \E\left\{\frac{1}{2}\int_{t}^{h}\left[{z_{\tau}}^{\intercal}Q_{\tau}z_{\tau}+2{z_{\tau}}^{\intercal}S_{\tau}\alpha_{\tau} + {\alpha_{\tau}}^{\intercal}R_{\tau}\alpha_{\tau}\right]d\tau +  \breve{J}_{h,z_{h}}\left(\alpha^{\epsilon}\right)\bigg| \mathcal{F}_{t}\right\}\nonumber\allowdisplaybreaks\\
                   &< \E\left\{\frac{1}{2}\int_{t}^{h}\left[{z_{\tau}}^{\intercal}Q_{\tau}z_{\tau}+2{z_{\tau}}^{\intercal}S_{\tau}\alpha_{\tau} + {\alpha_{\tau}}^{\intercal}R_{\tau}\alpha_{\tau}\right]d\tau + V(h,z_{h})\bigg| \mathcal{F}_{t}\right\} + \epsilon,\quad  dt\otimes d\mathbb{P}-a.s.\label{Append:SecIneq}
               \end{align} 
               Since the inequality \eqref{Append:SecIneq} holds for arbitrary $\alpha\in\breve{\mathcal{U}}_{t}$ and $\epsilon>0$, then $\forall h\in(t,T)$, we have
               \begin{equation}\label{Append:Ineq2}
                   V(t,\breve{\xi})\leq \inf_{\alpha\in \breve{\mathcal{U}}}\E\left\{\frac{1}{2}\int_{t}^{h}\left[{z_{\tau}}^{\intercal}Q_{\tau}z_{\tau}+2{z_{\tau}}^{\intercal}S_{\tau}\alpha_{\tau} + {\alpha_{\tau}}^{\intercal}R_{\tau}\alpha_{\tau}\right]d\tau +  V(h,z_{h})\bigg| \mathcal{F}_{t}\right\},\quad dt\otimes d\mathbb{P}-a.s.
               \end{equation} 
                From \eqref{Append:Ineq1} and \eqref{Append:Ineq2}, the equality \eqref{Append:BelmanPrinc} holds. This completes the proof.
\end{proof}    
Similarly to \cite[Theorem 3.4]{XYZ2012}, we obtain that if $(z^{*},\alpha^{*})\in\mathcal{C}_{\mathcal{F}}\left([t,T],\mathbb{R}^{n}\right)\times\breve{\mathcal{U}}_{t}$ is an optimal pair for the problem described by \eqref{Append:BrevCost}-\eqref{Append:BrevState}, then, for every $ h\in(t,T)$, we have 
               \begin{equation}
                   V(h,z^{*}_{h}) = \breve{J}_{h,z^{*}_{h}}\left(\alpha^{*}\right) = \frac{1}{2}\E\left[\int_{h}^{T}\left[{z^{*}_{\tau}}^{\intercal}Q_{\tau}z^{*}_{\tau}+2{z^{*}_{\tau}}^{\intercal}S_{\tau}\alpha^{*}_{\tau} + {\alpha^{*}_{\tau}}^{\intercal}R_{\tau}\alpha^{*}_{\tau}\right]d\tau + {z^{*}_{T}}^{\intercal}Q_{T}z^{*}_{T}\bigg| \mathcal{F}_{h}\right],\quad dt\otimes d\mathbb{P}-a.s.,
               \end{equation}
which demonstrates the validity of the Bellman optimality principle.

    \subsection{Stochastic HJB Equation and Optimal Control}
Following \cite{Peng1992,Qiu2018,Moon2022,Tang2015}, and temporarily disregarding the constraint on the admissible control set $\breve{\mathcal{U}}_{t}$, the value function \eqref{Append:ValueFunc} satisfies the stochastic HJB equation given by
\begin{equation}\label{Append:HJB}
    \begin{cases} 
        \mathcal{V}_{t}(t,x) =  -\displaystyle\inf_{ a \in \mathbb{R}^{d}}H(t,x,a,\mathcal{V}_{x}(t,x),\mathcal{V}_{xx}(t,x),\Sigma_{x}(t,x)) + \Sigma(t,x) dW^{0}_{t},\quad  (t,x)\in[0,T)\times \mathbb{R}^n\\
        \mathcal{V}(T,x) = \frac{1}{2}x^{\intercal}Q_{T}x,\quad  x\in \mathbb{R}^n
    \end{cases}
\end{equation}
where $\mathcal{V}_{t}$ denotes the partial derivative of $\mathcal{V}$ with respect to $t$, $\mathcal{V}_{x}$ and $\Sigma_{x}$ respectively represent the gradients of $\mathcal{V}$ and $\Sigma$ with respect to $x$, and $\mathcal{V}_{xx}$ denotes the Hessian matrix of $\mathcal{V}$ with respect to $x$. Moreover, $H$ is the stochastic Hamiltonian defined for $(t,x,\alpha,C,G,E)\in [0,T]\times \mathbb{R}^{n}\times\mathbb{R}^{d}\times \mathbb{R}^{n}\times \mathbb{S}^{n}\times\mathbb{R}^{n}$ by
\begin{equation}
    H(t,x,a,C,G,E):= (A_{t} x + B_{t}a)^{\intercal}C + \frac{1}{2}D^{\intercal}_{t}GD_{t} + D^{\intercal}_{t}E + \frac{1}{2}\left(x^{\intercal}Q_{t}x + 2{x}^{\intercal}S_{t}a + {a}^{\intercal}R_{t}a\right).
\end{equation} 
Under \Cref{A2}, the above stochastic HJB equation is well-posed \cite{Peng1992}. By considering the verification theorems \cite[Subsection 3.2]{Peng1992} and \cite[Theorem 2]{Moon2022} and by following the same procedure as in \cite[Subsection 2.3]{Moon2022}, for any $t\in[0,T],\,\breve{\xi}\in\left\{\breve{\xi} \in L^{2}_{\mathcal{F}_t}\left(\Omega,\mathbb{R}^{n}\right): \E\big[\breve{\xi}|\mathcal{F}^{0}_{t}\big]=0\right\}$, the value function \eqref{Append:ValueFunc} for problem \eqref{Append:BrevCost}-\eqref{Append:BrevState} has the following analytical form 
\begin{equation}
    V(t,\breve{\xi}) = \frac{1}{2}\breve{\xi}^{\intercal}P_{t}\breve{\xi},\quad dt\otimes d\mathbb{P}-a.s.
\end{equation}
Moreover, the optimal control $\alpha^{*}$ for problem \eqref{Append:BrevCost}-\eqref{Append:BrevState} is given by 
\begin{equation} \label{Append:OptimalControl}
    \alpha^{*}_{t} = R^{-1}_{t}\left(B^{\intercal}_{t}P_{t} + S^{\intercal}\right)z^{*}_{t},\quad dt\otimes d\mathbb{P}-a.s,
\end{equation} and the corresponding optimal trajectory $z^{*}$ satisfies
\begin{equation}
        dz^{*}_{t} = \left(A_{t} - B_{t}R^{-1}_{t}{(S_{t}+P_{t}B_{t})}^{\intercal}\right)z^{*}_{t}dt + D_{t} dW_{t}, \quad z^{*}_{0} = \xi - \E[\xi],
    \end{equation}
where the process $P$ is the solution of the stochastic Riccati equation given by
\begin{align}\label{Append:SRE}
    dP_{t} =& -\left[P_{t}\left(A_{t} - B_{t}R^{-1}_{t}S^{\intercal}_{t}\right) + \left(A_{t} - B_{t}R^{-1}_{t}S^{\intercal}_{t}\right)^{\intercal}P_{t} - P_{t}B_{t}R_{t}^{-1}B_{t}^{\intercal}P_{t} + Q_{t} - S_{t}R_{t}^{-1}S_{t}^{\intercal}\right]dt\nonumber\\
             &+\Psi_{t}^{0}dW_{t}^{0},\quad P_{T} = Q_{T},\allowdisplaybreaks
\end{align}
which coincides with the one obtained using the  variational method and given by \eqref{SRE}.

We observe that the obtained optimal control $\alpha^*$ given by \eqref{Append:OptimalControl} belongs to the constrained admissible set $\breve{\mathcal{U}}_{t}:= \Big\{ \alpha \in L_{\mathcal{F}}^{2}\left( [t,T],\mathbb{R}^{d}\right): \E\left[\alpha_{\tau} \big| \mathcal{F}^{0}_{\tau}\right]=0,\,d\tau\otimes d\mathbb{P}-a.s. \Big\}$. For further details on the verification of this constraint, we refer the reader to the proof of \Cref{OptimPrincMFT}. Indeed, by construction, the structure of the problem defined by \eqref{Append:BrevCost}–\eqref{Append:BrevState} ensures that the optimal control automatically satisfies the admissibility constraint.

\section{Well-Posedness of McKean-Vlasov FBSDEs with Stochastic Coefficients}\label{AppendixC}
We aim to show that according to \cite[Theorem 1, pp.~3863]{Ahuja2019}, one may establish the well-posedness of the linear Mckean-Valsov FBSDEs with random coefficients given as 
\begin{equation}\label{FBSDE:WellPos}
\begin{cases}
dX_{t} = \left( \hat{A}_{t}X_{t} + \hat{B}_{t}\mathbb{E}\left[X_{t}|\mathcal{F}_{t}^0\right] + \hat{C}_{t}Y_{t} + \hat{S}_{t}\right) dt  + \hat{D}_{t}dW_{t} + \hat{D}^{0}_{t}dW^{0}_{t}, \\
dY_{t} = -\left( \hat{A}^{\intercal}_{t} Y_{t} + \hat{B}^{\intercal}_{t} \mathbb{E}\left[Y_{t}|\mathcal{F}_{t}^0\right] + \tilde{C}_{t}X_{t} + \tilde{F}_{t}\mathbb{E}\left[X_{t}|\mathcal{F}_{t}^0\right] + \tilde{S}_{t}\right) dt + Z_{t} dW_{t} + Z^{0}_{t}dW^{0}_{t}, \\
X_0 = x, \quad Y_T = \hat{G}X_T + \tilde{G}\mathbb{E}[X_T|\mathcal{F}_{T}^0].
\end{cases}
\end{equation}
To this end, we impose the following conditions on the coefficients of the above FBSDE.
\begin{cond1}\thlabel{H1}
    $\hat{A}, \hat{B}, \tilde{C}, \tilde{F}\in L_{\mathcal{F}^{0}}^{\infty}\left( [0,T],\mathbb{R}^{n\times n}\right)$, $\hat{C}\in L_{\mathcal{F}^{0}}^{\infty}\left( [0,T],\mathbb{S}^{n}\right)$, $\hat{S},\tilde{S},\hat{D},\hat{D}^{0}\in L_{\mathcal{F}^{0}}^{2}\left([0,T],\mathbb{R}^{n}\right)$, $\hat{G},\tilde{G}\in L_{\mathcal{F}^{0}_{T}}^{\infty}\left(\mathbb{S}^{n}\right)$. 
\end{cond1}
\begin{cond2}\thlabel{H2}
    $-\hat{C}_{t}\geq 0,\tilde{C}_{t}\geq 0, \tilde{F}_{t} + \tilde{C}_{t}\geq 0, \,dt\otimes d\mathbb{P}-a.s$, and $\hat{G}\geq 0, \tilde{G}+\hat{G}\geq 0 \,d\mathbb{P}-a.s$.
\end{cond2}

\begin{theorem}\label{thm:wellposed-gen-FBSDE}
  Suppose that \hyperref[H1]{\thnameref{H1}} and \hyperref[H2]{\thnameref{H2}} hold. Then, the linear McKean-Vlasov FBSDEs given by \eqref{FBSDE:WellPos} admit a unique solution. 
\end{theorem}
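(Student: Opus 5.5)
The plan is to verify the hypotheses of \cite[Theorem 1]{Ahuja2019}, which gives existence and uniqueness for linear McKean--Vlasov FBSDEs with random coefficients under a monotonicity (or equivalently a convexity-type) condition. If the cited setting does not match \eqref{FBSDE:WellPos} directly, I will fall back on the method of continuation in the style of Hu--Peng/Peng--Wu. I will work in the Hilbert space
\[
\mathbb{H} := \mathcal{C}_{\mathcal{F}}([0,T],\mathbb{R}^{n})\times \mathcal{C}_{\mathcal{F}}([0,T],\mathbb{R}^{n})\times L^2_{\mathcal{F}}([0,T],\mathbb{R}^{n})\times L^2_{\mathcal{F}}([0,T],\mathbb{R}^{n}).
\]
Under \hyperref[H1]{\thnameref{H1}}, all coefficients are bounded and $\mathcal{F}^0$-adapted, and the maps $X\mapsto \E[X_t|\mathcal{F}^0_t]$ are $L^2$-contractions. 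Consequently the drift and generator are uniformly Lipschitz in $(X,\E[X|\mathcal{F}^0],Y,\E[Y|\mathcal{F}^0])$, the inhomogeneous terms are square integrable, and the terminal map is Lipschitz. This disposes of the regularity part of the hypotheses.

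The core step is the monotonicity estimate. Take two solutions $(X,Y,Z,Z^0)$ and $(X',Y',Z',Z^{0\prime})$ with the same initial datum, set $\Delta X = X-X'$, $\Delta Y=Y-Y'$, and apply Itô's formula to $\Delta X_t^\intercal \Delta Y_t$. After taking expectations, the terms $\hat A$ cancel. The terms $\hat B$ also cancel, since
\[
\E\big[\Delta Y_t^\intercal \hat B_t \E[\Delta X_t|\mathcal{F}^0_t]\big] = \E\big[\E[\Delta Y_t|\mathcal{F}^0_t]^\intercal \hat B_t\E[\Delta X_t|\mathcal{F}^0_t]\big],
\]
using that $\hat B$ is $\mathcal{F}^0$-adapted; this is the same smoothing argument as in \Cref{lemma:cost-term-decomp}. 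What remains is
\[
\E[\Delta X_T^\intercal(\hat G\Delta X_T+\tilde G\,\overline{\Delta X}_T)] = \E\!\int_0^T\!\big[\Delta Y_t^\intercal\hat C_t\Delta Y_t - \Delta X_t^\intercal\tilde C_t\Delta X_t-\Delta X_t^\intercal\tilde F_t\overline{\Delta X}_t\big]dt,
\]
where the overline denotes $\E[\cdot|\mathcal{F}^0_t]$. Next I split $\Delta X=\overline{\Delta X}+(\Delta X-\overline{\Delta X})$ orthogonally, exactly as in the decomposition of \Cref{CostSplit}. The $X$-terms then become $\E[\breve{\Delta X}^\intercal\tilde C\breve{\Delta X}]+\E[\overline{\Delta X}^\intercal(\tilde C+\tilde F)\overline{\Delta X}]$, and the terminal term becomes $\E[\breve{\Delta X}_T^\intercal\hat G\breve{\Delta X}_T]+\E[\overline{\Delta X}_T^\intercal(\hat G+\tilde G)\overline{\Delta X}_T]$; the cross terms vanish because $\tilde F,\tilde G$ are $\mathcal{F}^0$-measurable. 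Here the symmetry of $\tilde F$ must be handled, or only its symmetric part used, since only quadratic forms appear. By \hyperref[H2]{\thnameref{H2}} every term has the correct sign, which yields the monotonicity inequality $\langle \Phi(\theta)-\Phi(\theta'),\theta-\theta'\rangle \le -\beta_1\E\int|\hat C^{1/2}\Delta Y|^2$, together with nonnegative $X$-contributions.

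The main obstacle is that \hyperref[H2]{\thnameref{H2}} gives only \emph{non-strict} monotonicity, whereas the continuation method (and the condition in \cite{Ahuja2019}) requires $\beta_1+\beta_2>0$ with one of them strictly positive. I would first try to obtain strictness from the $Y$ side: $-\hat C$ strictly positive would give $\beta_1>0$. Failing that, I would exploit the structure coming from the application. In \Cref{wellposed_our_FBSDE} one has $\hat C=-BR^{-1}B^\intercal$, and uniqueness can be recovered by showing that $\Delta Y$ enters the forward equation only through $\hat C\Delta Y$. The estimate then forces $\hat C^{1/2}\Delta Y=0$, so $\Delta X$ solves a homogeneous linear conditional-McKean--Vlasov SDE and vanishes by Gronwall after conditioning on $\mathcal{F}^0$. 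Then $\Delta Y$ solves a linear mean-field BSDE with zero data, hence is zero by standard BSDE estimates. For existence I would run the continuation in $\mu\in[0,1]$, interpolating between the decoupled system ($\mu=0$, solvable by SDE then BSDE) and \eqref{FBSDE:WellPos}. At each step I would use the a priori estimate obtained by combining the Itô identity above with standard SDE and BSDE $L^2$ bounds, giving a step size independent of $\mu$. If strictness genuinely fails in general, I would add the regularisation $-\varepsilon Y$ in the forward drift, solve, and pass to the limit $\varepsilon\to0$ using the uniform bounds.
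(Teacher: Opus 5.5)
Your regularity check and monotonicity computation match the paper's verification of (A1)--(A3): cancellation of the $\hat A$ terms, cancellation of the $\hat B$ terms by smoothing, and the orthogonal splitting of the $X$- and terminal terms, signed by (H2). Where you diverge is the ``main obstacle''. It is not an obstacle for the cited result, because you have misread it. The hypothesis (A3) of \cite[Theorem 1]{Ahuja2019} that the paper checks is a \emph{weak} monotonicity condition weighted by a bounded linear map $c_t$. The forward coefficients need only be Lipschitz in terms of $\|\Delta X_t\|^2+\|c_t(\Delta Y_t)\|^2$, and strictness is required only as $-\E\|c_t(\Delta Y_t)\|^2$. Take $c_t(Y)=(-\hat C_t)^{1/2}Y$. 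The Lipschitz bound then holds since $\|\hat C_t\Delta Y_t\|\le\|(-\hat C_t)^{1/2}\|\,\|c_t(\Delta Y_t)\|$, and the monotonicity is exactly your Itô identity. So the paper finishes by citation, with no continuation and no $\varepsilon$-regularisation.

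Your fallback is still sound; it amounts to re-proving that theorem in the linear case. The uniqueness argument is complete. It is also not specific to \Cref{wellposed_our_FBSDE}: in \eqref{FBSDE:WellPos} the forward drift depends on $Y$ only through $\hat C_tY_t$, for any $\hat C$ satisfying (H2).

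For existence, the step size is uniform in $\mu$ only if the a priori estimate uses the same weighting, in this order:
\begin{enumerate}
\item The Itô identity bounds $\mu\,\E\int_0^T\|c_t(\Delta Y_t)\|^2dt$ by cross terms with the data.
\item The forward estimate controls $\Delta X$ through $\mu^2\E\int_0^T\|c_t(\Delta Y_t)\|^2dt$.
\item Only then does the BSDE estimate bound $(\Delta Y,\Delta Z,\Delta Z^0)$.
\end{enumerate}
A forward bound in terms of the full $\|\Delta Y\|$, combined with the BSDE bound, closes only for small $\mu$. The interpolation must also keep the $\hat A,\hat B$ cancellation intact, for example by scaling only the term $\hat C_tY_t$. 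That choice also gives your decoupled $\mu=0$ system.

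Your route gives a self-contained argument that makes the role of $(-\hat C)^{1/2}$ explicit. The paper's route is a short verification of hypotheses.
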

\begin{proof}
Define
\begin{align*}
    &b\left(t, X_{t}, \mathbb{E}\left[X_{t}|\mathcal{F}_{t}^0\right], Y_{t}, \mathbb{E}\left[Y_{t}|\mathcal{F}_{t}^0\right], Z_{t}, Z^{0}_{t}\right):= \hat{A}_{t}X_{t} + \hat{B}_{t}\mathbb{E}\left[X_{t}|\mathcal{F}_{t}^0\right] + \hat{C}_{t}Y_{t} + \hat{S}_{t},\allowdisplaybreaks\\
    &f\left(t, X_{t}, \mathbb{E}\left[X_{t}|\mathcal{F}_{t}^0\right], Y_{t}, \mathbb{E}\left[Y_{t}|\mathcal{F}_{t}^0\right], Z_{t}, Z^{0}_{t}\right):= -\left( \hat{A}^{\intercal}_{t} Y_{t} + \hat{B}^{\intercal}_{t} \mathbb{E}\left[Y_{t}|\mathcal{F}_{t}^0\right] + \tilde{C}_{t}X_{t} + \tilde{F}_{t}\mathbb{E}\left[X_{t}|\mathcal{F}_{t}^0\right] + \tilde{S}_{t}\right),\allowdisplaybreaks\\
    &\sigma\left(t, X_{t}, \mathbb{E}\left[X_{t}|\mathcal{F}_{t}^0\right], Y_{t}, \mathbb{E}\left[Y_{t}|\mathcal{F}_{t}^0\right], Z_{t}, Z^{0}_{t}\right):= \hat{D}_{t},\allowdisplaybreaks\\
    &\tilde{\sigma}\left(t, X_{t}, \mathbb{E}\left[X_{t}|\mathcal{F}_{t}^0\right], Y_{t}, \mathbb{E}\left[Y_{t}|\mathcal{F}_{t}^0\right], Z_{t}, Z^{0}_{t}\right):= \hat{D}^{0}_{t},\allowdisplaybreaks\\
    &g\left(X_{T}, \mathbb{E}\left[X_{T}|\mathcal{F}_{T}^0\right]\right):= \hat{G}X_T + \tilde{G}\mathbb{E}[X_T|\mathcal{F}_{T}^0].
\end{align*} 
We aim to show that, under \hyperref[H1]{\thnameref{H1}}-\hyperref[H2]{\thnameref{H2}}, the corresponding lifting functionals $B, \Sigma, \Sigma^{0}, F, G$ of \eqref{FBSDE:WellPos} given by 
\begin{equation}
    \begin{aligned}
        &B\left(t, X_{t}, Y_{t}, Z_{t}, Z^{0}_{t}\right) = b\left(t, X_{t}, \mathbb{E}\left[X_{t}|\mathcal{F}_{t}^0\right], Y_{t}, \mathbb{E}\left[Y_{t}|\mathcal{F}_{t}^0\right], Z_{t}, Z^{0}_{t}\right),\allowdisplaybreaks\\
        &F\left(t, X_{t}, Y_{t}, Z_{t}, Z^{0}_{t}\right) =f\left(t, X_{t}, \mathbb{E}\left[X_{t}|\mathcal{F}_{t}^0\right], Y_{t}, \mathbb{E}\left[Y_{t}|\mathcal{F}_{t}^0\right], Z_{t}, Z^{0}_{t}\right),\allowdisplaybreaks\\
        &\Sigma\left(t, X_{t}, Y_{t}, Z_{t}, Z^{0}_{t}\right) = \sigma\left(t, X_{t}, \mathbb{E}\left[X_{t}|\mathcal{F}_{t}^0\right], Y_{t}, \mathbb{E}\left[Y_{t}|\mathcal{F}_{t}^0\right], Z_{t}, Z^{0}_{t}\right),\allowdisplaybreaks\\
        &\Sigma^{0}\left(t, X_{t}, Y_{t}, Z_{t}, Z^{0}_{t}\right) = \tilde{\sigma}\left(t, X_{t}, \mathbb{E}\left[X_{t}|\mathcal{F}_{t}^0\right], Y_{t}, \mathbb{E}\left[Y_{t}|\mathcal{F}_{t}^0\right], Z_{t}, Z^{0}_{t}\right),\allowdisplaybreaks\\
        &G\left(X_{T}\right) = g\left(X_{T}, \mathbb{E}\left[X_{T}|\mathcal{F}_{T}^0\right]\right),
    \end{aligned}
\end{equation}
satisfy assumptions \cite[(A1)-(A3)]{Ahuja2019}, where $\mathcal{F}_{t} := \sigma\left\{W_{s},W^{0}_{s}; 0 \le s \le t\right\}$ and $\mathcal{G}_{t} :=\mathcal{F}_{t}^0 = \sigma\left\{W^{0}_{s}; 0 \le s \le t\right\}$. 

It is clear that, under \hyperref[H1]{\thnameref{H1}},  $\mbox{for}\,\, \phi = b,f,\sigma,\tilde{\sigma}$, we have
\begin{equation}\label{FirstCond}
    \mathbb{E}\left(\int^{T}_{0}\Vert \phi(t,0,0,0,0,0,0)\Vert^{2}dt\right)<\infty.
\end{equation} 
Therefore, assumptions \cite[(A1)-(A2)]{Ahuja2019} follow from \hyperref[H1]{\thnameref{H1}}. 

We now shift our focus to assumption in \cite[(A3)]{Ahuja2019}. For any tuples $(X_{T}, Y_{T}, Z_{T}, Z^{0}_{T}) \mbox{ and } (X'_{T}, Y'_{T}, Z'_{T}, Z^{0'}_{T})$, define $\Delta X_{T}:= X_{T} - X'_{T}$ and $\Delta g := g\left(X_{T}, \mathbb{E}\left[X_{T}|\mathcal{F}_{T}^0\right]\right) - g\left(X'_{T}, \mathbb{E}\left[X'_{T}|\mathcal{F}_{T}^0\right]\right)$. Then, we have 
\begin{align}\label{ZeroFirst}
    \mathbb{E}\left[\left\Vert\Delta g\right\Vert^{2}\right] &= \mathbb{E}\left[\left\Vert \hat{G}\Delta X_T + \mathbb{E}[\tilde{G}\Delta X_T|\mathcal{F}_{T}^0]\right\Vert^{2}\right]\nonumber\allowdisplaybreaks\\
    &\leq 2\mathbb{E}\left[\left\Vert \hat{G}\Delta X_T\right\Vert^{2}\right] + 2\mathbb{E}\left[\left\Vert \mathbb{E}[\tilde{G}\Delta X_T|\mathcal{F}_{T}^0]\right\Vert^{2}\right]\nonumber\allowdisplaybreaks\\
    &\leq 2\mathbb{E}\left[\left\Vert \hat{G}\right\Vert^{2} \left\Vert\Delta X_T\right\Vert^{2}\right] + 2\mathbb{E}\left[\left(\mathbb{E}\left[\left\Vert\tilde{G}\Delta X_T\right\Vert|\mathcal{F}_{T}^0\right]\right)^{2}\right]\allowdisplaybreaks\\
    &\leq 2\Vert \hat{G} \Vert^{2}_{L_{\mathcal{F}^{0}_{T}}^{\infty}}\mathbb{E}\left[\left\Vert\Delta X_T\right\Vert^{2}\right] + 2\mathbb{E}\left(\mathbb{E}\left[\left\Vert\tilde{G}\right\Vert^{2}|\mathcal{F}_{T}^0\right]\mathbb{E}\left[\left\Vert\Delta X_T\right\Vert^{2}|\mathcal{F}_{T}^0\right]\right)\nonumber\allowdisplaybreaks\\
    &\leq 2\Vert \hat{G} \Vert^{2}_{L_{\mathcal{F}^{0}_{T}}^{\infty}}\mathbb{E}\left[\left\Vert\Delta X_T\right\Vert^{2}\right] + 2\Vert \tilde{G} \Vert^{2}_{L_{\mathcal{F}^{0}_{T}}^{\infty}}\mathbb{E}\left[\mathbb{E}\left[\left\Vert\Delta X_T\right\Vert^{2}|\mathcal{F}_{T}^0\right]\right]\nonumber\allowdisplaybreaks\\
    &=2\left(\Vert \hat{G} \Vert^{2}_{L_{\mathcal{F}^{0}_{T}}^{\infty}} + \Vert \tilde{G} \Vert^{2}_{L_{\mathcal{F}^{0}_{T}}^{\infty}}\right)\mathbb{E}\left[\left\Vert\Delta X_T\right\Vert^{2}\right]\nonumber\allowdisplaybreaks,
\end{align}
where to derive the third inequality, the Cauchy-Schwarz inequality was used. 

Moreover, for any $t \in [0, T]$ and any tuples $(X_{t}, Y_{t}, Z_{t},Z^{0}_{t}), (X'_{t}, Y'_{t}, Z'_{t}, Z^{0'}_{t})$, define $\Delta X_{t} := X_{t} - X'_{t}, \Delta Y_{t} := Y_{t} - Y'_{t}$ and for $\phi = b,f,\sigma,\tilde{\sigma}$, define
\begin{gather*}
    \Delta \phi_{t}:=\phi\left(t, X_{t}, \mathbb{E}\left[X_{t}|\mathcal{F}_{t}^0\right], Y_{t}, \mathbb{E}\left[Y_{t}|\mathcal{F}_{t}^0\right], Z_{t}, Z^{0}_{t}\right) - \phi\left(t, X'_{t}, \mathbb{E}\left[X'_{t}|\mathcal{F}_{t}^0\right], Y'_{t}, \mathbb{E}\left[Y'_{t}|\mathcal{F}_{t}^0\right], Z'_{t}, Z^{0'}_{t}\right).
\end{gather*} 
Similarly to \eqref{ZeroFirst}, we obtain
\begin{align*}
    \mathbb{E}\left[\left\Vert\Delta b_{t}\right\Vert^{2}\right]&\leq 3\left(\Vert \hat{A} \Vert^{2}_{L_{\mathcal{F}^{0}}^{\infty}} + \Vert \hat{B} \Vert^{2}_{L_{\mathcal{F}^{0}}^{\infty}} \right)\mathbb{E}\left[\left\Vert\Delta X_t\right\Vert^{2}\right] + 3\left\Vert (-\hat{C})^{\frac{1}{2}} \right\Vert^{2}_{L_{\mathcal{F}^{0}}^{\infty}}\mathbb{E}\left[\left\Vert (-\hat{C}_{t})^{\frac{1}{2}}\Delta Y_t\right\Vert^{2}\right],\allowdisplaybreaks\\
    \mathbb{E}\left[\left\Vert\Delta f_{t}\right\Vert^{2}\right]&\leq 4\left(\Vert \tilde{C} \Vert^{2}_{L_{\mathcal{F}^{0}}^{\infty}} + \Vert \tilde{F} \Vert^{2}_{L_{\mathcal{F}^{0}}^{\infty}} \right)\mathbb{E}\left[\left\Vert\Delta X_t\right\Vert^{2}\right] + 4\left(\Vert \hat{A} \Vert^{2}_{L_{\mathcal{F}^{0}}^{\infty}} + \Vert \hat{B} \Vert^{2}_{L_{\mathcal{F}^{0}}^{\infty}}\right)\mathbb{E}\left[\left\Vert\Delta Y_t\right\Vert^{2}\right],\allowdisplaybreaks\\
    \mathbb{E}\left[\left\Vert\Delta \sigma_{t}\right\Vert^{2}\right]&=0,\allowdisplaybreaks\\
    \mathbb{E}\left[\left\Vert\Delta \tilde{\sigma}_{t}\right\Vert^{2}\right]&=0,
\end{align*}
where the positive definiteness of $-\hat{C}_{t}$, imposed in \hyperref[H2]{\thnameref{H2}}, ensures that $(-\hat{C}_{t})^{\frac{1}{2}}$ is well defined. Therefore, there exist a positive constant $K$ and a uniformly bounded linear map $c_t:\mathbb{R}^n \to \mathbb{R}^n$, defined by $c_t(Y)=(-\hat{C}_{t})^{\frac{1}{2}}Y$, such that
\begin{align}\label{SecCond}
    \mathbb{E}\left[\left\Vert\Delta b_{t}\right\Vert^{2}\right]&\leq K\mathbb{E}\left[\left\Vert\Delta X_t\right\Vert^{2} + \left\Vert c_{t}\left(\Delta Y_t\right)\right\Vert^{2}\right]\nonumber\allowdisplaybreaks\\
    \mathbb{E}\left[\left\Vert\Delta \sigma_{t}\right\Vert^{2}\right]&\leq K\mathbb{E}\left[\left\Vert\Delta X_t\right\Vert^{2} + \left\Vert c_{t}\left(\Delta Y_t\right)\right\Vert^{2}\right]\nonumber\allowdisplaybreaks\\
    \mathbb{E}\left[\left\Vert\Delta \tilde{\sigma}_{t}\right\Vert^{2}\right]&\leq K\mathbb{E}\left[\left\Vert\Delta X_t\right\Vert^{2} + \left\Vert c_{t}\left(\Delta Y_t\right)\right\Vert^{2}\right]\allowdisplaybreaks\\
    \mathbb{E}\left[\left\Vert\Delta f_{t}\right\Vert^{2}\right]&\leq K\mathbb{E}\left[\left\Vert\Delta X_t\right\Vert^{2} + \left\Vert\Delta Y_t\right\Vert^{2}\right]\nonumber \allowdisplaybreaks\\
    \mathbb{E}\left[\left\Vert\Delta g\right\Vert^{2}\right]&\leq K\mathbb{E}\left[\left\Vert\Delta X_T\right\Vert^{2}\right]\nonumber.
\end{align} 
Furthermore, we have
\begin{align}
    \mathbb{E}\left[\left\langle \Delta g,\Delta X_T\right\rangle\right] &= \mathbb{E}\left[\Delta X_T^{\intercal}\left(\hat{G}\Delta X_T + \tilde{G}\mathbb{E}\left[\Delta X_T|\mathcal{F}_{T}^0\right]\right)\right]\allowdisplaybreaks \notag\\
    &=\mathbb{E}\left[\Delta X_T^{\intercal}\hat{G}\Delta X_T\right] + \mathbb{E}\left[\Delta X_T^{\intercal} \tilde{G}\mathbb{E}\left[\Delta X_T|\mathcal{F}_{T}^0\right]\right]\allowdisplaybreaks \notag\\
    &=\mathbb{E}\left[\Delta X_T^{\intercal}\hat{G}\Delta X_T\right] + \mathbb{E}\left(\mathbb{E}\left[\Delta X_T^{\intercal} \tilde{G}\mathbb{E}\left[\Delta X_T|\mathcal{F}_{T}^0\right]\Big|\mathcal{F}_{T}^0\right]\right)\notag\\
    &=\mathbb{E}\left[\Delta X_T^{\intercal}\hat{G}\Delta X_T\right] - \mathbb{E}\left(\mathbb{E}\left[\Delta X_T|\mathcal{F}_{T}^0\right]^{\intercal}\hat{G}\mathbb{E}\left[\Delta X_T|\mathcal{F}_{T}^0\right]\right) + \mathbb{E}\left(\mathbb{E}\left[\Delta X_T|\mathcal{F}_{T}^0\right]^{\intercal}(\tilde{G}+\hat{G})\mathbb{E}\left[\Delta X_T|\mathcal{F}_{T}^0\right]\right)\allowdisplaybreaks \notag\\
    &=\mathbb{E}\left[\left(\Delta X_T - \mathbb{E}\left[\Delta X_T|\mathcal{F}_{T}^0\right]\right)^{\intercal}\hat{G}\left(\Delta X_T - \mathbb{E}\left[\Delta X_T|\mathcal{F}_{T}^0\right]\right)\right]\allowdisplaybreaks + \mathbb{E}\left(\mathbb{E}\left[\Delta X_T|\mathcal{F}_{T}^0\right]^{\intercal}(\tilde{G} + \hat{G})\mathbb{E}\left[\Delta X_T|\mathcal{F}_{T}^0\right]\right)\notag\\
    &\geq 0,\label{B2b1}
\end{align} 
where the last inequality follows from \hyperref[H2]{\thnameref{H2}}. 
 
Similarly, we have
\begin{align*}
    \mathbb{E}\left[\left\langle \Delta f_{t},\Delta X_{t} \right\rangle + \left\langle \Delta b_{t},\Delta Y_{t}\right\rangle  \right] &= - \mathbb{E}\left[\Delta X_{t}^{\intercal}\tilde{C}_{t}\Delta X_{t} \right] - \mathbb{E}\left( \mathbb{E}\left[\Delta X_t|\mathcal{F}_{t}^0\right]^{\intercal}\tilde{F}_{t} \mathbb{E}\left[\Delta X_t|\mathcal{F}_{t}^0\right]\right) - \mathbb{E}\left[ \Delta Y_{t}^{\intercal}(-\hat{C}_{t})\Delta Y_{t} \right] \\
     &= - \mathbb{E}\left[\Delta X_{t}^{\intercal}\tilde{C}_{t}\Delta X_{t} \right] + \mathbb{E}\left( \mathbb{E}\left[\Delta X_t|\mathcal{F}_{t}^0\right]^{\intercal}\tilde{C}_{t} \mathbb{E}\left[\Delta X_t|\mathcal{F}_{t}^0\right]\right)\allowdisplaybreaks\\
     &\quad - \mathbb{E}\left( \mathbb{E}\left[\Delta X_t|\mathcal{F}_{t}^0\right]^{\intercal}(\tilde{F}_{t}+\tilde{C}_{t}) \mathbb{E}\left[\Delta X_t|\mathcal{F}_{t}^0\right]\right) - \mathbb{E}\left[ \Delta Y_{t}^{\intercal}(-\hat{C}_{t})\Delta Y_{t} \right] \allowdisplaybreaks\\
     &= - \mathbb{E}\left[\left(\Delta X_{t} - \mathbb{E}\left[\Delta X_t|\mathcal{F}_{t}^0\right]\right)^{\intercal}\tilde{C}_{t}\left(\Delta X_{t} - \mathbb{E}\left[\Delta X_t|\mathcal{F}_{t}^0\right]\right)\right] \allowdisplaybreaks\\
     &\quad - \mathbb{E}\left( \mathbb{E}\left[\Delta X_t|\mathcal{F}_{t}^0\right]^{\intercal}(\tilde{F}_{t}+\tilde{C}_{t}) \mathbb{E}\left[\Delta X_t|\mathcal{F}_{t}^0\right]\right) - \mathbb{E}\left[ \Delta Y_{t}^{\intercal}(-\hat{C}_{t})\Delta Y_{t} \right]\\
     &\leq - \mathbb{E}\left[ \Delta Y_{t}^{\intercal}(-\hat{C}_{t})\Delta Y_{t} \right]\nonumber\allowdisplaybreaks\\
     &= -\mathbb{E}\left[\left\Vert c_{t}\left(\Delta Y_t\right)\right\Vert^{2}\right],
\end{align*} 
where the last inequality is a consequence of \hyperref[H2]{\thnameref{H2}}.  Since, $\Delta \sigma_{t} = 0$ and $\Delta \tilde{\sigma}_{t} = 0$, then
\begin{align}\label{B2b2}
    \mathbb{E}\left[\left\langle \Delta f_{t},\Delta X_{t} \right\rangle + \left\langle \Delta b_{t},\Delta Y_{t}\right\rangle + \left\langle \Delta \sigma_{t},\Delta Z_{t} \right\rangle + \left\langle \Delta \tilde{\sigma}_{t},\Delta Z^{0'}_{t} \right\rangle\right] &\leq -\mathbb{E}\left[\left\Vert c_{t}\left(\Delta Y_t\right)\right\Vert^{2}\right].
\end{align} 
Finally, from \eqref{SecCond}-\eqref{B2b2}, assumption \cite[(A3)]{Ahuja2019} is satisfied for the FBSDEs given by \eqref{FBSDE:WellPos}. Specifically, for any event $E\in \mathcal{G}_{t}$, from \eqref{SecCond}, we obtain  
\begin{align*}
    \mathbb{E}\left[\mathbb{1}_{E}\left\Vert\Delta B_{t}\right\Vert^{2}\right]&= \mathbb{E}\left[\mathbb{1}_{E}\left\Vert\Delta b_{t}\right\Vert^{2}\right] \allowdisplaybreaks\\
    &= \mathbb{E}\left(\mathbb{E}\left[\mathbb{1}_{E}\left\Vert\Delta b_{t}\right\Vert^{2}\big|\mathcal{G}_{t}\right]\right)\allowdisplaybreaks\\
    &= \mathbb{E}\left(\mathbb{1}_{E}\mathbb{E}\left[\left\Vert\Delta b_{t}\right\Vert^{2}\big|\mathcal{G}_{t}\right]\right)\allowdisplaybreaks\\
    &\leq \mathbb{E}\left(\mathbb{1}_{E}\mathbb{E}\left[K\left(\left\Vert\Delta X_t\right\Vert^{2} + \left\Vert c_{t}\left(\Delta Y_t\right)\right\Vert^{2}\right)\Big|\mathcal{G}_{t}\right]\right)\allowdisplaybreaks\\
    &= K\mathbb{E}\left[\mathbb{1}_{E}\left(\left\Vert\Delta X_t\right\Vert^{2} + \left\Vert c_{t}\left(\Delta Y_t\right)\right\Vert^{2}\right)\right],
\end{align*}
where, by \hyperref[H1]{\thnameref{H1}}, $c_t(Y_t)=(-\hat{C}_{t})^{\frac{1}{2}}Y_t$ satisfies \cite[(A3)-(a)]{Ahuja2019}. The remaining conditions \cite[(A3)-(b)]{Ahuja2019} and  \cite[(A3)-(c)]{Ahuja2019} also follow similarly. Consequently, under \hyperref[H1]{\thnameref{H1}}-\hyperref[H2]{\thnameref{H2}}, \cite[Theorem 1, pp.~3863]{Ahuja2019} guarantees that the FBSDE \eqref{FBSDE:WellPos} admits a unique solution.
\end{proof}

\begin{corollary}\label{wellposed_our_FBSDE} Suppose that \Cref{A2} holds. Then, the linear McKean-Vlasov FBSDEs given by \eqref{stoch-max-1} and \eqref{stoch-max-2} admit a unique solution. 
\end{corollary}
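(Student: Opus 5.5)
The plan is to recast the system \eqref{stoch-max-1}--\eqref{stoch-max-2} in the general form \eqref{FBSDE:WellPos}, identify its coefficients explicitly, and check \hyperref[H1]{\thnameref{H1}} and \hyperref[H2]{\thnameref{H2}}. Once both hold, \Cref{thm:wellposed-gen-FBSDE} gives existence and uniqueness directly.

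\textbf{Step 1: eliminate the control.} I substitute $u^\circ_t=-R_t^{-1}\big[S_t^{\intercal}x^\circ_t-S_t^{\intercal}H\E[x^\circ_t|\mathcal{F}^0_t]+B_t^{\intercal}y^\circ_t+\varpi_t\big]$ into both equations. Taking conditional expectations with respect to $\mathcal{F}^0_t$ gives
\[
\E[u^\circ_t|\mathcal{F}^0_t]=-R_t^{-1}\big[S_t^{\intercal}(I_n-H)\E[x^\circ_t|\mathcal{F}^0_t]+B_t^{\intercal}\E[y^\circ_t|\mathcal{F}^0_t]+\varpi_t\big],
\]
since $R,S,B,H,\varpi$ are $\mathcal{F}^0$-adapted. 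Collecting terms, the forward equation takes the form \eqref{FBSDE:WellPos} with
\[
\hat A=A-BR^{-1}S^{\intercal},\quad \hat B=F+BR^{-1}S^{\intercal}H,\quad \hat C=-BR^{-1}B^{\intercal},\quad \hat S=b-BR^{-1}\varpi,\quad \hat D=D,\quad \hat D^0=D^0.
\]
In the backward equation, the coefficient of $y^\circ$ is $(A-BR^{-1}S^{\intercal})^{\intercal}=\hat A^{\intercal}$. The coefficient of $\E[y^\circ|\mathcal{F}^0]$ is $F^{\intercal}+H^{\intercal}SR^{-1}B^{\intercal}=\hat B^{\intercal}$. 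This confirms the adjoint structure required by \eqref{FBSDE:WellPos}. The remaining backward coefficients are
\[
\tilde C=Q-SR^{-1}S^{\intercal},\quad \tilde F=\bar Q-Q+SR^{-1}S^{\intercal}H+H^{\intercal}SR^{-1}S^{\intercal}(I_n-H),\quad \tilde S=\bar\zeta-(I_n-H)^{\intercal}SR^{-1}\varpi.
\]
The terminal data are $\hat G=Q_T$ and $\tilde G=\bar Q_T-Q_T$.

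\textbf{Step 2: check \hyperref[H1]{\thnameref{H1}}.} By \Cref{A2}, $R_t\ge\delta I_d$, so $\|R_t^{-1}\|\le 1/\delta$. Together with the boundedness of $A,F,B,S,Q,H$, this makes $\hat A,\hat B,\hat C,\tilde C,\tilde F$ lie in $L^\infty_{\mathcal{F}^0}$, with $\hat C$ symmetric. The terms $\hat S,\tilde S,\hat D,\hat D^0$ lie in $L^2_{\mathcal{F}^0}$, because $b,\zeta,\varpi,D,D^0$ do and they are multiplied only by bounded factors. Since $H$ is $\mathcal{F}^0_0$-measurable, $\hat G$ and $\tilde G$ are bounded, symmetric and $\mathcal{F}^0_T$-measurable.

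\textbf{Step 3: check \hyperref[H2]{\thnameref{H2}}.} Several conditions are immediate:
\begin{itemize}
\item $-\hat C=BR^{-1}B^{\intercal}\ge0$;
\item $\tilde C=Q-SR^{-1}S^{\intercal}\ge0$ by \Cref{A2};
\item $\hat G=Q_T\ge0$;
\item $\hat G+\tilde G=\bar Q_T=(I_n-H)^{\intercal}Q_T(I_n-H)\ge0$.
\end{itemize}
The main obstacle is the sign of $\tilde C+\tilde F$. I would expand it as
\[
\tilde C+\tilde F=\bar Q-SR^{-1}S^{\intercal}+SR^{-1}S^{\intercal}H+H^{\intercal}SR^{-1}S^{\intercal}-H^{\intercal}SR^{-1}S^{\intercal}H .
\]
The last four terms combine to $-(I_n-H)^{\intercal}SR^{-1}S^{\intercal}(I_n-H)$. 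Hence
\[
\tilde C+\tilde F=(I_n-H)^{\intercal}\big(Q-SR^{-1}S^{\intercal}\big)(I_n-H)\ge0
\]
by \Cref{A2}.

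All hypotheses of \Cref{thm:wellposed-gen-FBSDE} are then satisfied, and it yields existence and uniqueness for \eqref{stoch-max-1}--\eqref{stoch-max-2}. Uniqueness of $u^\circ$ follows because it is an explicit function of $(x^\circ,y^\circ)$.
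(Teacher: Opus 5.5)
Your proposal is correct and takes the same route as the paper. You substitute $u^\circ$, identify the coefficients of \eqref{FBSDE:WellPos}, check (H1)--(H2) using \Cref{A2}, and invoke \Cref{thm:wellposed-gen-FBSDE}; your $\tilde F=\bar Q-Q+SR^{-1}S^{\intercal}H+H^{\intercal}SR^{-1}S^{\intercal}(I_n-H)$ agrees with the paper's $\bar Q-\bar S R^{-1}\bar S^{\intercal}-Q+SR^{-1}S^{\intercal}$ once expanded, and your factorization $\tilde C+\tilde F=(I_n-H)^{\intercal}(Q-SR^{-1}S^{\intercal})(I_n-H)\geq 0$ is exactly the check the paper calls straightforward.
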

\begin{proof}
We note that the FBSDEs given by \eqref{stoch-max-1}-\eqref{stoch-max-2}, with the expression for $u^\circ$ substituted, follow the same form as \eqref{FBSDE:WellPos}. Specifically, 
$X_{t}:=x^\circ_{t}, Y_{t}:=y^\circ_{t},    Z_{t}:=z^\circ_{t},
    Z^{0}_{t}:=z^{0,\circ}_{t},
    \hat{A}_{t}:= A_{t} - B_{t}R^{-1}_{t}S^{\intercal}_{t},
    \hat{B}_{t}:=F_{t} + B_{t}R^{-1}_{t}S^{\intercal}_{t}H,
    \hat{C}_{t}:=- B_{t}R^{-1}_{t}B^{\intercal}_{t},
    \hat{S}_{t}:=- B_{t}R^{-1}_{t}\varpi_{t} + b_{t},
    \hat{D}_{t}:= D_{t},
    \hat{D}^{0}_{t}:= D^{0}_{t},
    \tilde{C}_{t}:=Q_{t} - S_{t}R^{-1}_{t}S^{\intercal}_{t},
    \tilde{S}_{t}:=- \bar{S}_{t}R^{-1}_{t}\varpi_{t} + \bar{\zeta}_{t},
    \hat{G}:= Q_{T},
    \tilde{G}= \bar{Q}_{T} - Q_{T},
    \tilde{F}_{t}:=\bar{Q}_{t} -  \bar{S}_{t}R^{-1}_{t}\bar{S}^{\intercal}_{t} - Q_{t} + S_{t}R^{-1}_{t}S^{\intercal}_{t}= \left(I_{n} - H\right)^{\intercal}\left[Q - SR^{-1}S^{\intercal}\right]\left(I_{n} - H\right) - \left(Q_{t} - S_{t}R^{-1}_{t}S^{\intercal}_{t}\right).
$ Then, by \Cref{A2} and the conditions imposed on the coefficients in \eqref{stoch-max-1}-\eqref{stoch-max-2} (see the description of {\color{black}\thnameref{MTProb}}), the conditions \hyperref[H1]{\thnameref{H1}} and \hyperref[H2]{\thnameref{H2}} are satisfied straightforwardly. Consequently, according to \Cref{thm:wellposed-gen-FBSDE}, the FBSDEs \eqref{stoch-max-1}-\eqref{stoch-max-2} admit a unique solution.
\end{proof}

\end{document}